\author{Alessandro Arlandini}
\address[A.A.]{Warwick Mathematics Institute, University of Warwick, Coventry, UK}
\curraddr{Equinor ASA, Oslo, Norway}
\author{David Loeffler}
\address[D.L.]{Faculty of Mathematics \& Computer Science, UniDistance Suisse, Brig, Switzerland}
\urladdr{\href{http://orcid.org/0000-0001-9069-1877}{0000-0001-9069-1877}}
\title{On the factorisation of the \texorpdfstring{$p$}{p}-adic Rankin--Selberg \texorpdfstring{$L$}{L}-function in the supersingular case}
\thanks{Funded by: Engineering \& Physical Sciences Research Council Doctoral Training Partnership (AA); ERC Horizon 2020 Consolidator Grant ``Shimura varieties and the BSD conjecture'' (DL)}
\theoremstyle{definition}
\newtheorem{definition}{Definition}[subsection]
\theoremstyle{plain}
\newtheorem{theorem}[definition]{Theorem}
\newtheorem{proposition}[definition]{Proposition}
\newtheorem{lemma}[definition]{Lemma}
\newtheorem{corollary}[definition]{Corollary}
\newtheorem{thmintro}{Theorem}
\theoremstyle{remark}
\newtheorem*{remark}{Remark}
\DeclareMathOperator{\Sym}{Sym}
\DeclareMathOperator{\TSym}{TSym}
\DeclareMathOperator{\Hom}{Hom}
\DeclareMathOperator{\Ext}{Ext}
\DeclareMathOperator{\Res}{Res}
\DeclareMathOperator{\Fil}{Fil}
\DeclareMathOperator{\GL}{GL}
\DeclareMathOperator{\sgn}{sgn}
\DeclareMathOperator{\cl}{cl}
\DeclareMathOperator{\Spec}{Spec}
\DeclareMathOperator{\et}{\acute{e}t}
\DeclareMathOperator{\etbar}{\overline{\acute{e}t}}
\DeclareMathOperator{\syn}{syn}
\DeclareMathOperator{\rig}{rig}
\DeclareMathOperator{\dR}{dR}
\DeclareMathOperator{\fp}{fp}
\DeclareMathOperator{\mot}{\mathscr{M}}
\DeclareMathOperator{\pr}{pr}
\newcommand{\numberset}{\mathbb}
\newcommand{\C}{\numberset{C}}
\newcommand{\Q}{\numberset{Q}}
\newcommand{\R}{\numberset{R}}
\newcommand{\Z}{\numberset{Z}}
\newcommand{\N}{\numberset{N}}
\newcommand{\BdR}{B_{\dR}}
\newcommand{\Bcrys}{B_{\mathrm{crys}}}
\renewcommand{\theta}{\vartheta}
\renewcommand{\epsilon}{\varepsilon}
\renewcommand{\phi}{\varphi}
\newcommand{\HP}{\mathcal{H}}
\newcommand{\HS}{\mathscr{H}}
\newcommand{\CH}{\mathrm{CH}}
\newcommand{\ideal}{\mathfrak}
\newcommand{\ringint}{\mathcal}
\newcommand{\Frob}{\mathrm{Fr}}
\newcommand{\Ell}{\mathcal{E}}
\newcommand{\KS}{\mathscr{W}}
\newcommand{\theory}{\mathcal{T}}
\newcommand{\gal}[2]{\mathrm{Gal}({#1}/{#2})}
\newcommand{\BF}{\mathrm{BF}}
\newcommand{\lBF}{\widetilde{\mathrm{BF}}}
\newcommand{\lcBF}{\widetilde{\Xi}}
\newcommand{\cBF}{\Xi}
\newcommand{\MH}{\mathrm{MH}}
\newcommand{\del}{\mathcal{D}}
\newcommand{\AJ}{\mathrm{AJ}}
\newcommand{\ups}{\mathcal{S}}
\newcommand{\regulator}[1]{r_{#1}}
\newcommand{\rD}{\regulator{\del}}
\newcommand{\ret}{\regulator{\et}}
\newcommand{\rsyn}{\regulator{\syn}}
\newcommand{\Weight}{\mathcal{W}}
\newcommand{\Lgeom}{L_p^{\mathrm{geom}}}
\newcommand{\Limp}{L^{\mathrm{imp}}}
\newcommand{\ud}{\mathrm{d}}
\renewcommand{\ge}{\geqslant}
\renewcommand{\geq}{\ge}
\renewcommand{\le}{\leqslant}
\renewcommand{\leq}{\le}
\newcommand{\myqed}{\pushQED{\qed} \qedhere \popQED}
\begin{document}

\numberwithin{equation}{subsection}
\microtypesetup{protrusion=false}

\begin{abstract}
Given a cusp form $f$ which is supersingular at a fixed prime $p$ away from the level, and a Coleman family $F$ through one of its $p$-stabilisations, we construct a $2$-variable meromorphic $p$-adic $L$-function for the symmetric square of $F$, denoted $\Limp_p(\Sym^2 F)$. We prove that this new $p$-adic $L$-function interpolates values of complex imprimitive symmetric square $L$-functions, for the various specialisations of the family $F$.

We use this $p$-adic $L$-function to prove a $p$-adic factorisation formula, expressing the geometric $p$-adic $L$-function attached to the Rankin--Selberg convolution of $f$ with itself as a the product of the $p$-adic symmetric square $L$-function of $f$ and a Kubota-Leopoldt $L$-function. This extends a result of Dasgupta in the ordinary case.
\end{abstract}
\maketitle


\setcounter{tocdepth}{1}
\tableofcontents
\microtypesetup{protrusion=true}

\section{Introduction}

 \subsection{Statement of the results}

  Let be a normalised cuspidal modular newform of level $N$, weight $k + 2 \ge 2$, and character $\psi$. It is easy to see that the Rankin--Selberg convolution $L$-function of $f$ with itself factors as a product of two simpler factors:
  \begin{equation}
   \label{eq:complexfact}
   L(f\otimes f,s) = L(\Sym^2 f,s)L(\psi,s-k-1).
  \end{equation}
  Here we denote by $L(f\otimes f, s)$ and $L(\Sym^2 f, s)$ the primitive Rankin--Selberg and symmetric square complex $L$-functions. This corresponds to the evident decomposition of the tensor product Galois representation
  \begin{equation}
   \label{eq:decomposition-rho}
   \rho_{f,v} \otimes \rho_{f,v} \simeq \Sym^2(\rho_{f,v}) \oplus \det \rho_{f,v}.
  \end{equation}

  Now let $p$ be an odd prime not dividing $N$ at which $f$ is ordinary (i.e.~$a_p(f)$ is a $p$-adic unit). Dasgupta~\cite{Dasgupta-factorization} proved that the factorisation \eqref{eq:complexfact} has a $p$-adic analogue:
  \begin{equation}
   \label{intro:dasgupta}
   L_p(f\otimes f,\sigma) = L_p(\Sym^2 f,\sigma) L_p(\psi,\sigma - k - 1).
  \end{equation}
  Here $\sigma$ is a continuous $p$-adic character of $\Z_p^\times$, and $L_p(\Sym^2 f)$ is Schmidt's symmetric square $p$-adic $L$-function. The function $L_p(f\otimes f, \sigma)$ is the $p$-adic $L$-function defined by specialising Hida's $p$-adic Rankin--Selberg $L$-function for families of modular forms, to a pair of coincident weights. However, this is a significantly deeper theorem than the straightforward factorisation of the complex $L$-function, and its proof relies on the use of Euler systems and motivic cohomology.

  In the present paper, we generalise Dasgupta's result to non-ordinary $f$. First, we construct a $p$-adic symmetric square $L$-function for a Coleman family passing through $f$. We then prove a generalisation of~\eqref{intro:dasgupta} to the case when $f$ is $p$-supersingular by using said function.

  To state the main theorems, let $\Weight$ be the weight space, i.e.~the rigid analytic space classifying $p$-adic characters of $\Z_p^\times$; recall that there is an injection $\Z \hookrightarrow \Weight(\Q_p)$, mapping $n$ to the character $z\mapsto z^n$. We will also assume that the Satake parameters of $f$ at $p$ are distinct, i.e.\ that $\alpha_f\neq\beta_f$, and label them so that $v_p(\alpha_f) < k + 1$. Under this assumption, there exists an affinoid disc $U\subseteq \Weight$ around $k$, and a Coleman family $F$ over $U$, passing through the $p$-stabilisation of $f$ associated to $\alpha_f$. 
  
  \begin{thmintro}[Construction of a two-variable $\Sym^2$ $p$-adic $L$-function]
   \label{intro:final1}
   After possibly shrinking $U$, there exists a $2$-variable meromorphic $p$-adic $L$-function $\Limp_p(\Sym^2 F)\colon U\times\Weight \to \overline{\Q}_p$ with the following property: for all pairs $(t, j) \in \N^2$, with $t \in U \cap \N$ and $0 \le j \le t$ even, we have
   \[ \Limp_p(\Sym^2 F)(t, 1+j) = (*) \cdot \Limp(\Sym^2 f_t, 1 + j) \]
   where $f_t$ is the weight $t + 2$ specialization of $F$, and $(*)$ is an explicit factor, as long as the non-critical $p$-adic $L$-value $L_p(\psi, t - j)$ does not vanish. 
   
   If $\psi$ and $\psi^2$ both have conductor $N$, then an analogous interpolation formula (with slightly different explicit factors) also holds for odd $j$ in the range $t+1 \le j \le 2t + 1$, again supposing $L_p(\psi, t - j) \ne 0$.
  \end{thmintro}

  Here $\Limp(\Sym^2 f)$ is the imprimitive symmetric square complex $L$-function, see Subsection~\ref{subsec:primitive-imprimitive-functions} below. For the precise form of the interpolation formula we refer to Theorem~\ref{thm:first-half-final} (for $0 \le j \le t$) and Theorem~\ref{thm:interpolation-righthalf} (for $t + 1 \le j \le 2t + 1$). Note that the interpolation formula determines $\Limp_p(\Sym^2 F)$ uniquely on $U \times \Weight_-$, where $\Weight_-$ is the subspace classifying odd characters; and when $\psi$ and $\psi^2$ are primitive, $\Limp_p(\Sym^2 F)$ is uniquely determined on all of $U \times \Weight$.
  
  \begin{thmintro}[Dasgupta-style factorization formula]
   \label{intro:final2}
   Let $\Lgeom(F,F)$ be the $3$-variable geometric $p$-adic Rankin--Selberg $L$-function constructed in \cite{loefflerzerbes16}. Then, under the same hypotheses as above, the following factorisation of $p$-adic $L$-functions holds:
   \[
    \Lgeom(F,F)(\kappa,\kappa,\sigma) = \Limp_p(\Sym^2 F)(\kappa,\sigma) \cdot L_p(\psi, \sigma - \kappa - 1)
   \]
   for every $(\kappa,\sigma)\in U\times\Weight$.
  \end{thmintro}

  This gives the hoped-for generalisation of~\eqref{intro:dasgupta} to include the $p$-supersingular case. Finally, we also prove that, under the aforementioned stronger condition on $\psi$, the $p$-adic function defined above satisfies a functional equation.

  \begin{thmintro}[P-adic symmetric square functional equation]
   \label{intro:final3}
   Suppose that $\psi$, $\psi^2$ are primitive modulo $N$. Then $\Limp_p(\Sym^2 F)$ satisfies the following functional equation for every $(\kappa, \sigma)\in U\times\Weight$:
   \[
    \Limp_p(\Sym^2 F)(\kappa,2\kappa + 3 - \sigma) = \epsilon(\Sym^2 F)(\kappa, \sigma) \cdot \Limp_p(\Sym^2 F^*)(\kappa,\sigma),
   \]
   where $\epsilon(\Sym^2 F)$ is the unique analytic function on $U \times \Weight$ that interpolates the symmetric square $\epsilon$-factor.
  \end{thmintro}

 \subsection{Strategy of the proof}

  Our strategy is strongly inspired by that of Dasgupta in the ordinary case \cite{Dasgupta-factorization}, but with an important difference, which is that the non-ordinary setting forces us to embrace the use of higher $K$-theory.

  Let us briefly recall Dasgupta's strategy. Suppose $k = 0$, so $f$ has weight 2, and that $f$ is ordinary. Then both sides of \eqref{intro:dasgupta} are measures on $\Z_p^\times$, so they are uniquely determined by their values at $p$-adic characters $\sigma = 1 + \chi$, for $\chi$ of finite order. For each even Dirichlet character $\chi$ of $p$-power conductor, Dasgupta uses Bloch's intersection pairing on higher Chow groups to define a \emph{Beilinson---Flach unit} $b_{f, \chi}$, lying in the $\chi$-eigenspace of $\Z[\mu_{p^\infty}]^\times$. It follows from the regulator formulae of \cite{bertolinidarmon14, leiloefflerzerbes14} that this unit has the following properties:
  \begin{itemize}
   \item its $p$-adic logarithm is (up to explicit correction factors) $L_p(f \otimes f, 1 + \chi)$,
   \item its complex logarithm is (up to explicit factors) $L'(f \otimes f, \chi, 1)$.
  \end{itemize}

  It turns out that the unit eigenspace in which $b_{f, \chi}$ lies is 1-dimensional, and has a canonical generator: the cyclotomic unit $u_\chi$, whose complex logarithm is $L'(\chi, 0)$. So $b_{f, \chi}$ must be a rational multiple of $u_{\chi}$, and the complex-analytic factorisation shows that this rational multiple is the critical $L$-value $L(\Sym^2 f, \chi, 1)$. It follows that the $p$-adic logarithms of $b_{f, \chi}$ and $u_{\chi}$ are related by the same rational factor, showing that \eqref{intro:dasgupta} holds at $\sigma = 1 + \chi$. It follows that \eqref{intro:dasgupta} holds identically over the subset $\Weight_- \subset \Weight$ parametrising characters with $\sigma(-1) = -1$; and the result for the subspace $\Weight_+$ follows from the functional equation (which interchanges $\Weight_+$ and $\Weight_-$). Lastly, we may generalise from weight 2 forms to forms of arbitrary weight by deforming in a Hida family (in which weight 2 forms are dense).

  For non-ordinary forms, two problems arise. Firstly, the $p$-adic Rankin--Selberg $L$-function is no longer a measure, but a possibly un-bounded locally analytic distribution; so its values at the family of characters $\sigma = 1 + \chi$ above are no longer sufficient to uniquely determine its values everywhere. Secondly, a similar ``non-density'' also holds in the variable $\kappa$: although we can deform non-ordinary $p$-adic modular forms in families (Coleman families), weight 2 specialisations are no longer dense in these.

  We solve these difficulties using the following two innovations:
  \begin{itemize}
   \item using analytic continuation from algebraic characters (of the form $x \mapsto x^n$ for $n \in \Z$) rather than from finite-order characters;
   \item replacing unit groups with more general higher $K$-groups of cyclotomic fields (equivalently, motivic cohomology groups).
  \end{itemize}

  For a form $f$ of arbitrary weight $k + 2 \ge 2$, and any even $j$ in the interval $0 \le j \le k$, we construct a class $b_{f, j} \in H^1_{\mot}(\Q(\mu_N), \Q(1 + k - j))$, the \emph{Beilinson--Flach $K$-element}, whose images under complex (resp.~$p$-adic) regulators capture the complex and $p$-adic $L$-values of $f \otimes f$ at $1 + j$. As before, the class $b_{f, j}$ lies in a 1-dimensional character eigenspace, which has a canonical basis given by the \emph{cyclotomic element} of Beilinson and Soul\'e (a higher $K$-theory analogue of the cyclotomic unit); and the complex regulator formula, together with the factorisation of $L(f \otimes f, s)$, shows that the constant of proportionality between these two classes is a critical $\Sym^2$ $L$-value, from which the factorisation of the $p$-adic $L$-value follows.

  Vitally, if we allow both $k$ and $j$ to vary (deforming $f$ in a Coleman family), the set of $(k, j)$ at which this theorem applies is sufficiently dense to uniquely determine the $p$-adic $L$-functions, so we can deduce the factorisation formula over $\Weight_-$, and the factorisation over $\Weight_+$ follows via the functional equation as before.

  (A complicating factor is that there does not appear to be a complete account in the literature of the construction of a $p$-adic symmetric square $L$-function for Coleman families. So we shall in fact \emph{construct} our $p$-adic symmetric square $L$-function as a by-product of the factorisation formula, rather than appealing to a pre-existing construction.)

 \subsection*{Acknowledgements}

  This paper is based on the first author's University of Warwick PhD thesis~\cite{arlandini-thesis}, supervised by the second author. (The second author apologises both to the reader, and to the first author, for having taken so long to revise these results for publication.) It is a pleasure to thank Guido Kings, Dmitrii Krekov, Francesco Lemma, Aprameyo Pal, Giovanni Rosso, Guhan Venkat and Sarah Zerbes for illuminating conversations on the topics of this project.
  

\section{\texorpdfstring{$L$}{L}-functions and factorisations}

 \subsection{Notation and preliminaries}

  Denote by $\HP$ the upper complex half plane, on which $\GL_2^+(\R)$ acts on the left via Möbius transformations. For $N\in\N_{\geq 1}$, we denote by $\Gamma_1(N)$ and $\Gamma_0(N)$ the usual congruence subgroups of $\mathrm{SL}_2(\Z)$.

  We write $M_r(\Gamma)$ and $S_r(\Gamma)$ for the spaces of modular and cusp forms of level $\Gamma$ and weight $r \geq 1$. When $\Gamma = \Gamma_1(N)$ we use the shorthands $M_r(N)$ and $S_r(N)$, and for $\chi$ a Dirichlet character modulo $N$ we denote by $M_r(N,\chi)$ and $S_r(N, \chi)$ the subspaces of forms transforming via $\chi$ under the diamond operators.

  For every $m\in \N_{\geq 1}$, the Hecke operator at $m$ is denoted $T_m$. For every $d\in (\Z/N\Z)^{\times}$, the diamond operator associated to $d$ is denoted $\langle d \rangle$. The commutative algebra generated over $\Q$ by all Hecke and diamond operators is denoted $\mathbb{T}$ and called Hecke algebra. 
  The Petersson inner product will be denoted by
  \[
   \langle h_1,h_2 \rangle_{\Gamma_1(N)} = \int_{\Gamma_1(N) \backslash \HP} h_1(z)\overline{h_2(z)} y^{k - 2} \, \mathrm{d}x\, \mathrm{d}y, \quad z = x + iy.
  \]

  For $h \in S_r(N, \psi)$ we let $h^*(z) = \overline{h(-\overline{z})} \in S_r(N, \psi^{-1})$, whose $q$-expansion is given by $a_n(h^*) = \overline{a_n(h)}$. For $h$ a normalised eigenform, we write $K_h = \Q(\{a_n(h)\}_{n\geq 1}) \subset \C$, which is a finite extension of $\Q$.

  Fix once and for all an odd prime $p\in\N$ and an embedding $\overline{\Q} \hookrightarrow \overline{\Q}_p$. We also normalise the $p$-adic valuation on $\overline{\Q}_p$ so that $v_p(p) = 1$. Suppose that $h$ is a normalised eigenform. We say that $h$ is \emph{ordinary} at $p$ if $v_p(a_p(h))=0$, that it is \emph{supersingular} at $p$ if $v_p(a_p(h))>0$. We call $v_p(a_p(h))$ the \emph{slope} of $h$ at $p$.

  For a normalized eigenform $h$ of weight $k + 2$ and level $N$, and a prime $\ell \nmid N$, the Hecke polynomial of $h$ at $\ell$ is the quadratic polynomial
  \[
   X^2 - a_\ell(h)X + \psi(\ell)\ell^{k+1} \in K_h[X].
  \]
  The two roots $\alpha_{h,\ell}$, $\beta_{h,\ell}$ of this polynomial are the Satake parameters of $h$ at $\ell$. The two corresponding $\ell$-\emph{stabilisations} of $h$ are defined as
  \[
   h_{\alpha_{h,\ell}}(z) = h(z) - \beta_{h,\ell}h(\ell z), \qquad h_{\beta_{h,\ell}}(z) = h(z) - \alpha_{h,\ell}h(\ell z).
  \]
  These are forms of level $\Gamma_1(N) \cap \Gamma_0(\ell)$, with the same Hecke eigenvalues of $h$ away from $\ell$. The eigenvalues of $h_{\alpha_{h,\ell}}$ and $h_{\beta_{h,\ell}}$ under the Hecke operator at $\ell$ are $\alpha_{h,\ell}$ and $\beta_{h,\ell}$ respectively.

  For every prime $\ell$, we denote by $\Frob_\ell$ the arithmetic Frobenius at $\ell$ in $\gal{\overline{\Q}}{\Q}$ (well-defined up to conjugacy and inertia).

  For a Dirichlet character $\chi$ modulo $M$, we write $\chi_{Gal}$ for the character of $\gal{\Q(\mu_M)}{\Q}$ given by composing $\chi$ with the \emph{inverse} of the mod $M$ cyclotomic character, so that $\chi_{Ga\ell}(\Frob_\ell^{-1}) = \chi(\ell)$ for every prime $\ell$ not dividing the conductor. (This inverse is required in order that $L(\chi_{Gal}, s) = L(\chi, s)$.)

 \subsection{Primitive and imprimitive \texorpdfstring{$L$}{L}-functions}
  \label{subsec:primitive-imprimitive-functions}

  Let $k, k' \geq 0$ be integers. Let $f \in S_{k+2}(N,\psi)$ and $g \in S_{k'+2}(N',\psi')$ be normalised cuspidal eigenforms of weights $r=k+2$ and $r'=k'+2$ respectively, with $q$-expansions
  \[
   f = \sum_{n \geq 1} a_n(f)q^n, \quad g = \sum_{n \geq 1} a_n(g)q^n.
  \]
  Without losing generality we assume $k \geq k'$. We assume that the fixed prime $p$ is coprime to $NN'$. We denote by $K_f$ and $K_{f,g}$ the number fields generated by the coefficients of $f$, and $f$ and $g$ respectively, and by $N_{K_f/\Q}$ and $N_{K_{f,g}/\Q}$ the respective norms. Later on we will restrict to the case of $f$ and $g$ non-ordinary at a fixed rational prime, but the first four sections work in full generality so we do not make this assumption yet. From section~\ref{sec:cohomology-kuga-sato} onwards we will also assume $N\geq 5$.

  Associated to the modular form $f$ there is a collection of Galois representations as constructed in~\cite{deligne69}:
  \[
   \rho_{f,v} \colon G_{\Q} \to \GL(V_{f,v})
  \]
  where $v$ is any place of $K_f$ and $V_{f,v}$ is a $2$-dimensional vector space over $K_{f, v}$. When $v \nmid \ell$, $\rho_{f,v}$ is unramified at $\ell$ with local factor:
  \[
   \det(1 - X\Frob_\ell^{-1} \mid V_{f,v}) = 1 - a_\ell(f)X + \ell^{k+1}\psi(\ell)X^2.
  \]

  \subsubsection*{Rankin-Selberg $L$-functions} The primitive Rankin--Selberg $L$-function has been studied extensively in literature, and the book~\cite{jacquet72} is the most complete account. For the imprimitive function we refer to~\cite{shimura76,shimura77}.
  \begin{definition}
   The \emph{primitive} Rankin--Selberg $L$-function associated to $f$ and $g$ is the $L$-function of the tensor product Galois representation $\rho_{f,v}\otimes \rho_{g,v}$:
   \begin{gather*}
    L(f \otimes g, s) = \prod_{\ell \; \text{prime}} P_\ell(\ell^{-s})^{-1}, \\
    P_\ell(X) =
    \begin{cases}
     \det\bigl(1 - X\Frob_\ell^{-1} \mid (V_{f,v}\otimes V_{g,v})^{I_\ell}\bigr) &\text{if } v \nmid \ell, \\
     \det\bigl(1 - X\phi \mid (V_{f,v}\otimes V_{g,v} \otimes \Bcrys)^{G_{\Q_\ell}}\bigr) &\text{if } v \mid \ell.
    \end{cases}
   \end{gather*}
   This is known to be independent of $v$. We have denoted with $\phi$ the crystalline Frobenius.
  \end{definition}

  For primes $\ell \nmid NN'$ the local factor has the following explicit form:
  \[
   P_\ell(X) = (1-\alpha_{f,\ell}\alpha_{g,\ell}X)(1-\alpha_{f,\ell}\beta_{g,\ell}X)(1-\beta_{f,\ell}\alpha_{g,\ell}X)(1-\beta_{f,\ell}\beta_{g,\ell}X)
  \]
  where $\alpha_{\bullet,\ell}$ and $\beta_{\bullet,\ell}$ are the local Satake parameters at $\ell$.

  The completed $L$-function $\Lambda(f \otimes g, s) \coloneqq \Gamma_{\C}(s) \Gamma_{\C}(s - k' - 1) L(f \otimes g, s)$ admits a meromorphic continuation to the whole complex plane (with a simple pole at $s = k + 2$ if $f = g^*$, and entire otherwise), and satisfies a functional equation (see e.g.~\cite[\S 19]{jacquet72}) of the form
  \[ \Lambda(f\otimes g, 1-s) = \epsilon(f \otimes g, s) \Lambda(f^* \otimes g^*, s), \]
  where $\epsilon(f \otimes g, s)$ is a function of the form $A \cdot C^s$, for some $C \in \N$ (the analytic conductor, which always divides $(NN')^2$). In particular, if $f$ has character $\psi$, then $L(f \otimes f, s)$ has a simple pole in the following two cases (and is entire otherwise):
  \begin{itemize}
  \item $\psi = 1$,
  \item $\psi$ is an odd quadratic character, and $f$ has CM by the imaginary quadratic field cut out by $\psi$ (the ``exceptional CM'' case).
  \end{itemize}

  \begin{definition}
   The \emph{imprimitive} Rankin--Selberg $L$-function associated to $f$ and $g$ is defined by:
   \[
    L(f,g,s) = L_{(N N')}(\psi\psi',2s-2-k-k')\sum_{n\geq 1} \frac{a_n(f)a_n(g)}{n^s} \qquad \text{for $\Re(s) > \frac{k + k'}{2} + 2$}
   \]
   and then continued meromorphically to the whole complex plane. (This is the same function denoted by $D_{NN'}(f, g, s)$ in \cite{Dasgupta-factorization}.) The notation $L_{(N N')}(\psi \psi', -)$ stands for an $L$-function with the local factors at primes diving $NN'$ removed.
  \end{definition}

  The imprimitive $L$-function is given by the following Euler product:
  \begin{gather*}
   L(f,g,s) = \prod_{\ell \; \text{prime}} P_\ell^{\mathrm{imp}}(\ell^{-s})^{-1}, \\
   P_\ell^{\mathrm{imp}}(X) = (1-\alpha_{f,\ell}\alpha_{g,\ell}X)(1-\alpha_{f,\ell}\beta_{g,\ell}X)(1-\beta_{f,\ell}\alpha_{g,\ell}X)(1-\beta_{f,\ell}\beta_{g,\ell}X).
  \end{gather*}
  Here we define $(\alpha_{f,\ell},\beta_{f,\ell}) \coloneqq (a_\ell(f), 0)$ if $\ell \mid N$ and similarly for $g$. We have $P_\ell^{\mathrm{imp}}(X) \mid P_\ell(X)$, with equality if $\ell \nmid N N'$, so the primitive and imprimitive $L$-functions agree up to finitely many Euler factors.

  \begin{remark}
   The advantage of the imprimitive $L$-function is that its local factors are more explicit than those of $L(f \otimes g, s)$, and in addition, it has a more explicit integral representation as we shall recall below. Moreover, it is the imprimitive $L$-function which appears in the regulator formulae for Beilinson--Flach elements. Its chief disadvantage is that it does not, in general, have a functional equation.
  \end{remark}

  \begin{definition}
   We define
   \[ P(s) = \frac{L(f, g, s)}{L(f \otimes g, s)} = \prod_{\ell \mid N} \frac{P_\ell(\ell^{-s})}{P_\ell^{\mathrm{imp}}(\ell^{-s})}.\]
   By construction this is a product of polynomials in $\ell^{-s}$ for $\ell \mid N$ (each of degree $\le 4$).
  \end{definition}

  \begin{proposition}\label{prop:imprimitive-eq-prim}
   If $N = N'$ and all three characters $\psi$, $\psi'$ and $\psi\psi'$ are primitive of conductor $N$, then $L(f, g, s) = L(f \otimes g, s)$, and the analytic conductor is equal to $N^3$.
  \end{proposition}

  \begin{proof}
   The hypothesis implies that for each $\ell \mid N$ (and $v \nmid \ell$), both $V_{f, v}|_{G_{\Q_{\ell}}}$ and $V_{g, v}|_{G_{\Q_\ell}}$ are the direct sum of a ramified and an unramified character, so a direct computation of Euler factors gives the result.
  \end{proof}

  \subsubsection*{Symmetric square} The (complex) $L$-function of the symmetric square, which we now introduce, fits in a similar picture. Our main references are~\cite{shimura75}, which studies the imprimitive one and its analytic properties, and~\cite{schmidt88}, which studies the primitive one in great detail and also gives an account of the comparison between the two.

  \begin{definition}
   The \emph{primitive} symmetric square $L$-function associated to $f$ is the $L$-function of the representation $\Sym^2 \rho_{f,v}$:
   \begin{gather*}
    L(\Sym^2 f,s) = \prod_{\ell \; \text{prime}} Q_\ell(\ell^{-s})^{-1}, \\
    Q_\ell(X) =
    \begin{cases}
     \det\bigl(1 - X\Frob_\ell^{-1} \mid (\Sym^2 V_{f,v})^{I_\ell}\bigr) &\text{if } v \nmid \ell,  \\
     \det\bigl(1 - X\phi \mid (\Sym^2 V_{f,v}\otimes \Bcrys)^{G_{\Q_\ell}}\bigr) &\text{if } v\mid \ell.
    \end{cases}
   \end{gather*}
  \end{definition}

  As with the Rankin--Selberg local factors, the primitive local factors $Q_\ell$ are independent of $v$, and for $\ell \nmid N$ they are given by an explicit formula:
  \[
   Q_\ell(X) = (1-\alpha_{f,\ell}^2 X)(1-\alpha_{f,\ell}\beta_{f,\ell}X)(1-\beta_{f,\ell}^2 X).
  \]
  The critical values of $L(\Sym^2 f,s)$ are the odd integers in the range $1 \leq s \leq k+1$ and the even integers in the range $k+2 \leq s \leq 2k+2$. It is entire except in the ``exceptional CM'' case, in which case it has a simple pole at $s = k + 2$.

  \begin{definition}
   The \emph{imprimitive} symmetric square $L$-function associated to $f$ is defined by:
   \[
    \Limp(\Sym^2 f,s) = L_{(N)}(\psi^2, 2s-2k-2)\sum_{n\geq 1} \frac{a_{n^2}(f)}{n^s} \qquad \text{for $\Re(s) > k + 2$}
   \]
   and then continued meromorphically to the whole complex plane.
  \end{definition}

  In the region $\Re(s) > k + 2$, $\Limp(\Sym^2 f,s)$ admits the following Euler product:
  \begin{gather*}
   \Limp(\Sym^2 f,s) = \prod_{\ell \; \text{prime}} Q_\ell^{\mathrm{imp}}(\ell^{-s})^{-1}, \\
   Q_\ell^{\mathrm{imp}}(X) = (1-\alpha_{f,\ell}^2 X)(1-\alpha_{f,\ell}\beta_{f,\ell}X)(1-\beta_{f,\ell}^2 X),
  \end{gather*}
  where as before we understand $(\alpha_{f,\ell},\beta_{f,\ell}) = (a_\ell(f),0)$ for $\ell \mid N$. Exactly as in the Rankin--Selberg case, we have a divisibility relation $Q_\ell^{\mathrm{imp}}(X) \mid Q_\ell(X)$ for all $\ell$, and for $\ell \nmid N$ this is an equality. Thus we can write $\Limp(\Sym^2 f,s) = L(\Sym^2 f,s)\cdot Q(s)$, where $Q$ is a product of polynomials in $\ell^{-s}$ for bad primes $\ell$.

  Analogous to Proposition \ref{prop:imprimitive-eq-prim}, if $\psi$ and $\psi^2$ have conductor exactly $N$, we have $L(\Sym^2 f, s) = \Limp(\Sym^2 f, s)$.

 \subsection{Primitive and imprimitive factorisation}

  \begin{proposition}
   We have the factorisations
   \begin{subequations}
    \begin{align}
     L(f \otimes f,s) &= L(\Sym^2 f,s)L(\psi, s - k - 1), \label{eq:complex-factorization-l} \\
     L(f,f,s) &= \Limp(\Sym^2 f,s)L_{(N)}(\psi,s-k-1) \label{eq:imprimitive-factorization-l}.
    \end{align}
   \end{subequations}
  \end{proposition}

  \begin{proof}
   The first formula follows from the definitions and \eqref{eq:decomposition-rho}, and the second from a straightforward explicit computation of Euler factors.
  \end{proof}

  \begin{corollary}
   \label{cor:defect-terms}
   The functions $P$ and $Q$ are related by:
   \[
    P(s) = Q(s)\, \prod_{\mathclap{\ell \mid N,\, \ell \nmid N_{\psi}}}\, \Bigl(1 - \frac{\psi(\ell)}{\ell^{s-k-1}} \Bigr).\myqed
   \]
  \end{corollary}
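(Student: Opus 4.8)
The plan is to deduce the relation purely formally, by combining the two factorisation formulae just established with the defining relations of the error factors, namely $L(f,f,s) = L(f\otimes f,s)\cdot P(s)$ (the $f=g$ case of the Rankin--Selberg defect) and $\Limp(\Sym^2 f,s) = L(\Sym^2 f,s)\cdot Q(s)$.

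First I would substitute the primitive factorisation \eqref{eq:complex-factorization-l}, $L(f\otimes f,s) = L(\Sym^2 f,s)L(\psi,s-k-1)$, into the defining relation for $P$, obtaining
\begin{equation*}
    L(f,f,s) = L(\Sym^2 f,s)\,L(\psi,s-k-1)\,P(s).
\end{equation*}
On the other hand, inserting the defining relation for $Q$ into the imprimitive factorisation \eqref{eq:imprimitive-factorization-l} gives
\begin{equation*}
    L(f,f,s) = L(\Sym^2 f,s)\,Q(s)\,L(\psi,s-k-1)\prod_{\substack{l \mid N \\ l \centernot\mid N_{\psi}}} \Bigl(1 - \frac{\psi(l)}{l^{s-k-1}} \Bigr).
\end{equation*}
Comparing the two right-hand sides and cancelling the common meromorphic factor $L(\Sym^2 f,s)L(\psi,s-k-1)$ yields the asserted identity.

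The only point that deserves a word of care is the cancellation: one needs that $L(\Sym^2 f,s)L(\psi,s-k-1)$ is not identically zero as a meromorphic function, which is immediate since both factors are given by convergent Euler products on a suitable right half-plane. Alternatively, and perhaps more transparently, one can avoid this altogether by arguing prime-by-prime: since $P_l = P_l^{\mathrm{imp}}$ and $Q_l = Q_l^{\mathrm{imp}}$ at every $l \centernot\mid N$, both $P(s)$ and $Q(s)$ are \emph{finite} products, over the primes $l\mid N$, of ratios of the local polynomials in $l^{-s}$ appearing in the definitions; the desired equality then follows factor-by-factor from exactly the local computations carried out in the proof of the preceding proposition (where the missing Euler factor was identified, prime by prime, with that of $L_{(N)}(\psi,s-k-1)$). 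In either formulation there is no genuine obstacle: all the content is already contained in the primitive and imprimitive factorisation formulae, and this corollary is just their quotient.
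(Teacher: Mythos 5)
Your main argument is correct and is essentially the paper's own proof, rearranged: both combine the primitive factorisation~\eqref{eq:complex-factorization-l}, the imprimitive factorisation~\eqref{eq:imprimitive-factorization-l}, and the defining relations for $P$ and $Q$, then cancel. The side remark about justifying the cancellation of a meromorphic factor, and the alternative prime-by-prime verification, are sensible but don't change the substance.
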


  Note that $(-1)^k = \psi(-1)$, so $L_{(N)}(\psi, j - k) = 0$ for all even $j < k$. This is also true for $j = k$ as long as $N > 1$. This gives a relation between first derivatives
  \begin{equation}
   L'(f,f,j+1) = \Limp(\Sym^2 f,j+1)L_{(N)}'(\psi,j-k)
   \label{eq:leading-terms}.
  \end{equation}
  This will be the starting point to deduce the $p$-adic factorisation from the complex one.

\section{\texorpdfstring{$p$}{p}-adic \texorpdfstring{$L$}{L}-functions}
\label{sec:padic-Lfunctions}

 In this section we introduce the $p$-adic $L$-functions we will study. We regard them as functions on the weight space $\Weight$, i.e.\ the rigid analytic space whose $K$-points are $\Weight(K) = \Hom(\Z_p^{\times},K^{\times})$ for each complete extension $K$ of $\Q_p$. We identify $\Z$ with a subset of $\Weight(\Q_p)$ in the obvious way. Arithmetic weights are those of the form $z \mapsto z^n \theta(z)$, for $\theta$ a Dirichlet character of $p$-power conductor and $n \in \Z$. We will use the additive notation ``$n + \theta$'' for such characters.

 Let $\Weight_{\pm}$ denote the two halves of the weight space whose $M$-points are, for every extension $M/\Q_p$:
 \[
  \Weight_{\pm}(M) = \{ \kappa\in\Hom(\Z_p^{\times},M^{\times}) \mid \kappa(-1) = \pm 1 \}.
 \]
 In particular, for the arithmetic weights
 \[
  \theta + n \in \Weight_{\pm} \iff \theta(-1) = \pm (-1)^n.
 \]

 \subsection{Dirichlet characters}

  Let $\chi$ be a Dirichlet character of conductor $N_{\chi}$ coprime to $p$, and define $a_{\chi}$ as $0$ if $\chi$ is even, $1$ if it is odd. Denote the Gauss sum of a character by:
  \[
   \tau(\chi) = \sum_{n=1}^{N_{\chi}} \chi(n) \exp\left(\tfrac{2\pi i n}{N_{\chi}}\right).
  \]
  \begin{proposition}[Kubota--Leopoldt]
   There exists a unique $p$-adic meromorphic function $L_p(\chi,\cdot)$ which at arithmetic weights $\theta+n$ satisfies the interpolation property: if $n\leq 0$ and $\chi\theta(-1) = (-1)^{n+1}$, then
   \begin{align*}
    L_p(\chi,\theta+n) &=  \mathrm{E}_p(\chi\theta^{-1},n)L(\chi\theta^{-1},n)
 \intertext{while if $n\geq 1$ and $\chi\theta(-1)=(-1)^n$, then}
    L_p(\chi,\theta+n) &=  \mathrm{E}_p(\chi^{-1}\theta,1-n)\frac{2\Gamma(n)i^{a_{\chi}}}{(2\pi i)^n} \frac{\chi(N_{\theta})\tau(\theta^{-1})}{N_{\theta}^n}L(\chi\theta^{-1},n)
   \end{align*}
   where $\mathrm{E}_p(\theta,x) = 1-\theta(p) p^{-x}$ if $\theta$ is unramified at $p$, and $\mathrm{E}_p(\theta,x) = 1$ otherwise.
  \end{proposition}

  The set of points at which we have interpolation is dense in $\Weight$, hence $L_p(\chi,\cdot)$ is uniquely determined. Note that when $\chi = 1$, $L_p(\chi, \sigma)$ has poles at $\sigma = 0$ and $\sigma = 1$ (and no other characters); if $\chi \ne 1$, then $L_p(\chi, \sigma)$ is analytic.

  \begin{remark}
   There are many conventions for how to define the $p$-adic $L$-function; many authors take it to be identically zero on $\Weight_+$, or on $\Weight_-$. We have followed \cite{Dasgupta-factorization}. With these conventions, the functional equation takes a form closely analogous to the complex version:
  \end{remark}

  \begin{theorem}
   Let $\tilde{\epsilon}(\chi, \sigma) = \tfrac{i^{a_\chi}}{\tau(\chi^{-1})} N_\chi^\sigma$ (an analytic function of $\sigma \in \Weight$). Then, for every $\sigma \in \Weight$, we have
   \[
    L_p(\chi,1-\sigma) = 
   \tilde{\epsilon}(\chi, \sigma) \cdot L_p(\chi^{-1},\sigma).\myqed
   \]
  \end{theorem}


\subsection{P-adic Rankin--Selberg $L$-functions}

 Recall that $L(f\otimes g,s)$ denotes the complex Rankin--Selberg $L$-function attached to $f \in S_{k+2}(N, \psi)$ and $g\in S_{k'+2}(N', \psi')$; without loss of generality suppose $k \geq k'$. If $k > k'$, this $L$-function has critical values, and we can attempt to construct a $p$-adic $L$-function by interpolating these. If $k = k'$ this does not work -- there are no critical values to interpolate -- so it is necessary to broaden our viewpoint, and allow the modular forms $f, g$ to vary in Coleman families.

%

  We do not aim to give here a full-fledged account of the topic of families of modular forms, we only explain the results relevant to our case, following~\cite{loefflerzerbes16}.

  \begin{definition}
   Let $U\subseteq \Weight$ be an affinoid disc defined over a finite extension $H$ of $\Q_p$, such that the set of classical weights $U \cap \N$ is dense in $U$. A \emph{Coleman family} $F$ over $U$ (of tame level $N$, with $p \nmid N$) is a power series
   \[
    F = \sum_{n\geq 0} a_n(F)q^n \in \mathcal{O}(U)[[q]]
   \]
   with $a_1(F)=1$ and $a_p(F)$ invertible in $\mathcal{O}(U)$, such that for all but finitely many classical weights $t \in U\cap \N$, the series $F_t = \sum_{n\geq 0} a_n(F)(t)q^n \in H[[q]]$ is the $q$-expansion of a classical modular form of weight $t+2$ and level $\Gamma_1(N)\cap\Gamma_0(p)$ which is a normalised eigenform.
  \end{definition}

  \begin{theorem}[\cite{loefflerzerbes16}]
   \label{thm:geometric-l-function}
   Let $U_1$ and $U_2$ be affinoid discs\footnote{To avoid trivial cases we should require that $U_1 \cap \Z$ and $U_2 \cap \Z$ are both non-empty.} in $\Weight$, and $F$ and $G$ Coleman families over $U_1$, $U_2$ respectively. Then there exists a unique meromorphic rigid function (analytic if $U_1$ is small enough)
   \[
    \Lgeom(F,G) \colon U_1\times U_2 \times \Weight \to \C_p \\
   \]
   with the following interpolation property. Suppose that $(t, t', j) \in U_1 \times U_2 \times \Weight$ is a triple of integers satisfying the inequalities
   \[ t, t' \ge 0, \qquad t' + 1 \le j \le t, \]
   and that $F$, $G$ specialise at $t$, $t'$ respectively to $p$-stabilisations $f_{\alpha_f}$ and $g_{\alpha_g}$ of eigenforms $f, g$ of prime-to-$p$ level. Then
   \begin{multline*}
     \Lgeom(F,G)(t,t',j+1) =\frac{E(f_{t},g_{t'},j+1)}{E(f_{t})E^*(f_{t})} \cdot \frac{\Lambda(f_t, g_{t'}, j + 1)}{2^{t + 3}(-i)^{t-t'}\langle f_t, f_t \rangle} \\ =\frac{E(f_{t},g_{t'},j+1)}{E(f_{t})E^*(f_{t})} \cdot \frac{j!(j-t'-1)!}{\pi^{2j-t'+1}(-i)^{t-t'}2^{2j+2+t-t'}\langle f_{t},f_{t}\rangle} 
     \cdot L(f_{t},g_{t'},j+1),
    \end{multline*}
    where
    \begin{gather*}
    E(f) = \left(1-\tfrac{\beta_f}{p\alpha_f}\right), \quad E^*(f) = \left(1-\tfrac{\beta_f}{\alpha_f}\right), \\
    E(f,g,j+1) = \left(1-\tfrac{p^j}{\alpha_f\alpha_g}\right) \left(1-\tfrac{p^j}{\alpha_f\beta_g}\right) \left(1-\tfrac{\beta_f\alpha_g}{p^{1+j}}\right) \left(1-\tfrac{\beta_f\beta_g}{p^{1+j}}\right).
   \end{gather*}
  \end{theorem}

  \begin{remark}
   Such a construction was announced in the main theorem of~\cite[\S 4.4]{Urban-nearly-overconvergent}, but the construction had a gap (later filled in by \cite{andreattaiovita21}). In the meantime, an independent construction was given in \cite{loefflerzerbes16}, which we follow here. We have shifted the $j$-variable by 1 relative to \cite{loefflerzerbes16}, for consistency with \cite{Dasgupta-factorization} in the ordinary case.
  \end{remark}

  Now let $f$ be an eigenform of level $N$ and weight $k + 2 \ge 2$, and $\alpha_f$ a root of its Hecke polynomial. We say $f$, or more precisely the pair $(f, \alpha_f)$, is \emph{noble} if the following holds:
  \begin{itemize}
  \item $\alpha_f$ is distinct from the other root\footnote{This is automatic for $k = 0$, and for all $k \ge 0$ if the Tate conjecture holds \cite{colemanedixhoven98}.} $\beta_f$ (``$p$-regular'').
  \item if $v_p(\alpha_f) = k + 1$ (the maximal possible value), then $M_{\et}(f)|_{G_{\Q_p}}$ is not the direct sum of two characters (``non-critical'').
  \end{itemize}

  If $f$ is noble, then for any small enough affinoid $U \ni \{k\}$, there exists a unique Coleman family $F$ over $U$ specialising to $f_{\alpha_f}$ at $k$ \cite[Theorem~4.6.4]{loefflerzerbes16}. We can thus define a $p$-adic Rankin--Selberg $L$-function for $f$,
  \[ L_{p, \alpha_f}(f, f, -) : \Weight \to \C_p, \qquad L_{p, \alpha_f}(f, f, \sigma) \coloneqq \Lgeom(F, F)(k, k, \sigma).\]

  \begin{remark}
   Note that $L_{p, \alpha_f}(f, f, \sigma)$ is an analytic function of $\sigma$, even when $f^* = f$ (unlike its complex counterpart, which has a pole in this case). In the light of the factorization formula that we shall prove below, this can be seen as a consequence of the poles of the $p$-adic Riemann zeta being cancelled out by ``trivial zeroes'' of the $p$-adic symmetric square $L$-function.
  \end{remark}

\section{Cohomology and regulators}

 In this section we introduce the main tools from arithmetic geometry which we will use in our construction.

 \subsection{Cohomology theories}

  For a regular scheme $X$, let $K_{\bullet}(X)$ be the $K$-theory of $X$. For every integer $n\in\N$, denote by $\mathrm{gr}^{\gamma}_n K_{\bullet}(X)$ the $n$-th graded piece of the $\gamma$-filtration of $K_{\bullet}(X)$, that is the eigenspace where the Adams operator $\psi^k$ acts as $k^n$ for every $k\in\N$.

  \begin{definition}[\cite{beilinson84}]
   If $X$ is a regular scheme, its \emph{motivic cohomology groups} for $n\in\N$ are:
   \[
    H^i_{\mot}(X,n) = \Q \otimes_{\Z} \mathrm{gr}_n^{\gamma} K_{2n-i}(X).
   \]
  \end{definition}

  We briefly recall some functorial properties of this theory. Let $X$ be a smooth and proper variety of even dimension $2d$ over a field $K$ of characteristic zero. Let $0\leq j\leq d-1$ be an integer. Let $\pi \colon X \to \Spec K$ be the structural morphism, then there is a push-forward map in any degree and for all $n\in\N$
  \[
   \pi_* \colon H_{\mot}^{\bullet}(X,n) \to H_{\mot}^{\bullet-4d}(K, n-2d).
  \]
  By composing the cup product and the push-forward along $\pi$ we can define a pairing in motivic cohomology by composition:
  \[
   \langle\cdot ,\cdot \rangle \colon H^{2d+1}_{\mot}(X,2d-j) \times H^{2d}_{\mot}(X,d) \xrightarrow{\cup} H^{4d+1}_{\mot}(X,3d-j) \xrightarrow{\pi_*} H^1_{\mot}(\Spec K,d-j).
  \]
  One of the main results of this article is the construction of an element in the rightmost motivic cohomology group which gives back $L$-values when realised in different cohomology theories. In this section we lay down the machinery to compute said realisations.

  \begin{remark}
   When $d = 1$ (so $X$ is a surface) and $j = 0$, the above pairing has a geometric interpretation as the Bloch intersection pairing. Indeed, we have isomorphisms
   \begin{gather*}
    H_{\mot}^{3}(X, 2) \simeq \CH^{2}(X, 1), \qquad
    H_{\mot}^{2}(X, 1) \simeq \CH^1(X), \\
    H_{\mot}^1(\Spec K,1) \simeq K^{\times}.
   \end{gather*}
   Elements of $\CH^{2}(X, 1)$ are given by sums $\sum_i (Z_i, f_i)$, where $Z_i$ are irreducible curves in $X$ and $f_i \in k(Z_i)^\times$, such that $\sum_i \operatorname{div}(Z_i) = 0$ as a zero-cycle on $X$. For a curve $Z'$ which intersects transversely with $\bigcup_i Z_i$, the pairing is given by
   \[
       \langle \sum_i (Z_i, f_i), [Z'] \rangle = \prod_i \prod_{x \in Z_i \cap Z'} f_i(x).
   \]
   This is the point of view used in \cite{Dasgupta-factorization}.
  \end{remark}

  Let $\theory\in\{\et,\dR,\mathrm{Betti},\del,\syn,\rig\}$ be a cohomology theory. For any choice of $\theory$ there exists a notion of ``constant'' sheaf $\Q_{\theory}$, for example for de Rham and étale cohomology they are actually the constant sheaves $\Q$ and $\Q_p$. For each cohomology theory there is a \emph{regulator} map
  \[
   \regulator{\theory} \colon H^i_{\mot}(X,n) \to H^i_{\theory}(X,\Q_{\theory}(n)).
  \]
  These maps are compatible with pull-backs, push-forwards and cup products. In the sequel we will only need the regulators $\rD$, $\ret$ and $\rsyn$. For their construction and properties we refer to the original work of Beilinson~\cite{beilinson84} for the first, to~\cite[Proposition~B.4.6 and Appendix~B]{huberwildeshaus} for the second, and to~\cite[Theorem~7.5]{besser00} for the third, where it was originally introduced.

 \subsection{A complex diagram}
  \label{sec:complex-diagram}

  In this subsection we explain the strategy we will apply to compute the image of cohomology classes under the regulator $\rD$ to Deligne cohomology. In this way, in Subsection~\ref{subsec:complex-argument} we will relate Beilinson--Flach cohomology classes with the special values of the complex Rankin--Selberg $L$-function. We will use Deligne cohomology for varieties defined over $\C$; for varieties over subfields $K$ of $\C$, we define Deligne cohomology by base-extending. Our main references on Deligne cohomology are~\cite{deningerscholl91,esnaultviehweg,jannsen88b}.

  Deligne cohomology is closely related to extension groups in the category of mixed Hodge structures, which are given explicitly as follows (see \cite[Equations~2.4.2 and 2.7.1, Proposition~4.13]{jannsen88b}). Let $K$ be a subring of $\C$. Then:
  \begin{equation}
   \label{eq:ext-groups}
   \begin{aligned}
    \Ext^0_{\MH_K}(K, H) &\simeq W_0 H \cap F^0 (H \otimes \C), \\
    \Ext^1_{\MH_K}(K, H) &\simeq \frac{W_0 H \otimes \C}{W_0 H + F^0(H\otimes \C)}, \\
    \Ext^i_{\MH_K}(K, -) &= 0 \quad \text{for $i>1$}.
   \end{aligned}
  \end{equation}

  First we consider the case of smooth proper varieties. We shall only consider the Deligne cohomology groups $H_{\del}^{p}(X, K(q))$ in the range $p \le 2q$, in which case there are short exact sequences
  \[
   0 \to \Ext^1_{\MH_K}(K, H^{p-1}_B(X,K(q))) \to H_{\del}^{p}(X,K(q)) \to \Ext^0_{\MH_K}(K, H^{p}_B(X,K(q))) \to 0.
  \]
  In particular, if $p < 2q$ then $H^{p}_B(X, K(q))$ is a pure Hodge structure of negative weight; so the $\Ext^0$ term vanishes, and $W_0 H = H$, hence
  \[ H_{\del}^{p}(X,K(q)) \cong \frac{H^{p-1}_{\dR}(X / \C)}{(2\pi i)^q H^{p-1}_B(X, K) + F^q H^{p-1}_{\dR}(X/\C)}.\]

  Now suppose $X$ has dimension $2d$, as above, and let $\rD \colon H_{\mot} \to H_{\del}$ be the Deligne regulator. As $\rD$ is compatible with cup product and push-forward, we have a commutative diagram (for $j < d$):

  \begin{equation}
   \label{diag:complex-compatibility}
   \begin{tikzcd}[column sep=small]
    H^{2d+1}_{\mot}(X,2d-j) \times H^{2d}_{\mot}(X,d) \arrow["\cup"]{r} \arrow["\rD \times \rD"]{d} &
    H^{4d+1}_{\mot}(X,3d-j) \arrow["\pi_*"]{r} \arrow["\rD"]{d} &
    H^1_{\mot}(K,d-j) \arrow[d,"\rD"] \\
    \begin{array}{l}
     H_{\del}^{2d+1}(X_{\C}, \R(2d-j)) \\[0.2ex]
    \qquad\qquad \times\  H_{\del}^{2d}(X_{\C}, \R(d))
    \end{array}
    \arrow["\cup"]{r} &
    H_{\del}^{4d+1}(X_{\C}, \R(3d-j)) \arrow["\pi_*"]{r} &
    H_{\del}^1(\C, \R(d-j))
   \end{tikzcd}
  \end{equation}

  For computations it is useful to add a third row relating Deligne cohomology to de Rham and Betti: letting $\MH_\R$ denote the category of mixed $\R$-Hodge structures, we have canonical maps
  \[
   \begin{aligned}
    H_{\del}^{2d+1}(X_{\C},\R(2d-j)) &\xrightarrow{\simeq} \Ext^1_{\MH_\R}(\R, H_B^{2d}(X_{\C},\R)(2d-j)),\\
    H_{\del}^{2d}(X_{\C},\R(d)) &\to \Ext^0_{\MH_\R}(\R, H_B^{2d}(X_{\C},\R)(d)).
   \end{aligned}
  \]
  The cup-product between these lands in
  \[
   H^1_{\del}(\C, \R(d - j)) = \Ext^1_{\MH_\R}(\R, \R(d - j)) = \frac{\C}{(2\pi i)^{d-j}\R}.
  \]

  \begin{theorem}
   If $X$ is smooth and proper, then we can extend \eqref{diag:complex-compatibility} to a commutative diagram as in figure~\ref{diag:complex-diagram}.
  \end{theorem}

  \begin{figure}[b]
   \centering
   \begin{mdframed}
    \caption{Complex diagram}
    \label{diag:complex-diagram}
    \[
     \begin{tikzcd}
      H^{2d+1}_{\mot}(X,2d-j) \times\ H^{2d}_{\mot}(X,d)
      \arrow["\pi_* \circ\, \cup"]{r} \arrow["\rD \times \rD"]{d} &
      H^1_{\mot}(K,d-j) \arrow[d,"\rD"] \\
       H_{\del}^{2d+1}(X_{\C}, \R(2d-j))
       \times\ H_{\del}^{2d}(X_{\C}, \R(d))
      \arrow["\pi_* \circ\, \cup"]{r} \arrow{d} &
      H^1_{\del}(\C, \R(d-j)) \arrow[equal]{d} \\
      \begin{array}{l}
       \Ext^1_{\MH_\R}(\R, H_B^{2d}(X, \R(2d-j))) \\[0.5ex]
       \qquad \times\ \Ext^0_{\MH_\R}(\R, H_B^{2d}(X,\R(d)))
       \end{array} \arrow["\pi_* \circ\, \cup"]{r} &
      \Ext^1_{\MH_\R}(\R, \R(d - j))
     \end{tikzcd}
    \]
   \end{mdframed}
  \end{figure}

  Note that the composite of the maps in the left column,
  \[
   H^{2d}_{\mot}(X, d) 
   \to \Ext^0_{\MH_\R}(\R, H_B^{2d}(X_{\C},\R)(d)) = H^{(d, d)}_{\dR}(X, \C) \cap H^{2d}_B(X, \R(d))
  \]
  is the \emph{de Rham cycle class map}
  \begin{equation}
   \label{def:cycle-class-map}
   \begin{aligned}
    \cl_{\dR} \colon \CH^d(X) &\to H^{d,d}_{\dR}(X,\C)^{\vee} \\
    [V] &\mapsto \Bigl( \rho \mapsto \int_{V\setminus V^{\mathrm{sing}}} \rho \Bigr)
   \end{aligned}
  \end{equation}
  where we identify $H^{d,d}_{\dR}(X,\C)$ with its own dual by Poincar\'e duality.

  When $X$ is smooth and affine (but not necessarily proper), we do not necessarily have an interpretation in terms of $\Ext$ of Hodge structures; but there is still a long exact sequence relating Deligne, de Rham and Betti cohomology. Moreover, for an affine variety of dimension $2d$, the de Rham and Betti cohomologies vanish in degrees strictly greater than $2d$, so:

  \begin{proposition}
   \label{prop:isomorphism-for-affine-deligne}
   If $X$ is smooth and affine of dimension 2d, then there is an isomorphism
   \[
    H_{\del}^{2d+1}(X,K(d-j)) \xrightarrow{\simeq}
    \frac{H^{2d}_{\dR}(X / \C)}{(2\pi i)^{d-j} H^{2d}_B(X, K) + F^{d-j} H^{2d}_{\dR}(X/\C)}.
   \]
  \end{proposition}
%
%
%

\subsection{A \texorpdfstring{$p$}{p}-adic diagram}
\label{sec:p-adic-diagram}

  As in the previous subsection, we start from the commutative diagram given by the compatibility of $\ret$ with cup product and push-forward. Let $\ret \colon H_{\mot} \to H_{\et}$ be the étale regulator. As $\ret$ is compatible with cup product and push-forward, there is a commutative diagram

  \begin{equation}
   \label{diag:etale-compatibility}
   \begin{tikzcd}[column sep=small, row sep=large]
    H^{2d+1}_{\mot}(X,2d-j) \times H^{2d}_{\mot}(X,d) \arrow["\cup"]{r} \arrow["\ret \times \ret"]{d} & H^{4d+1}_{\mot}(X,3d-j) \arrow["\pi_*"]{r} \arrow["\ret"]{d} & H^1_{\mot}(K,d-j) \arrow[d,"\ret"] \\
    H_{\et}^{2d+1}(X,\Q_p(2d-j)) \times H_{\et}^{2d}(X,\Q_p(d)) \arrow["\cup"]{r} & H_{\et}^{4d+1}(X,\Q_p(3d-j)) \arrow["\pi_*"]{r} & H_{\et}^1(K,\Q_p(d-j))
   \end{tikzcd}
  \end{equation}

  We now bring in the Hochschild-Serre spectral sequence: recall that for every $n\in\Z$ its second page is
  \[
   E_2^{ij} = H^i(K,H^j_{\et}(X_{\overline{K}},\Q_p(n))) \implies H^{i+j}_{\et}(X_K,\Q_p(n)).
  \]
  In particular one has a canonical edge map
  \[
   H_{\et}^{2d}(X,\Q_p(d)) \to H^0(K, H^{2d}_{\et}(X_{\overline{K}},\Q_p(d))).
  \]

  On the other hand, we also have an edge map
  \[ H^{2d+1}_{\et}(X,\Q_p(2d-j)) \to H^0(K,H^{2d+1}_{\et}(X_{\overline{K}}, \Q_p(2d-j))).\]
  We define a class in $H^{2d+1}_{\et}(X, 2d-j)$ to be \emph{homologically trivial} if it maps to 0, and write $H^{2d+1}_{\et}(X, 2d-j)_0$ for the group of such classes; then we have a refined edge map
  \[ H^{2d+1}_{\et}(X, 2d-j)_0 \to H^1(K,H^{2d}_{\et}(X_{\overline{K}},\Q_p(2d-j))), \]
  and exactly as in the previous section, we have the following:

  \begin{theorem}
   For $X$ smooth and proper over $K$, the diagram in figure~\ref{diag:etale-diagram} is commutative (where $ H^{2d+1}_{\mot}(X,2d-j)_0$ denotes the preimage of $H_{\et}^{2d+1}(X, \Q_p(2d-j))_0$).
  \end{theorem}

  \begin{figure}[b]
   \centering
   \begin{mdframed}
    \caption{$p$-adic étale diagram}
    \label{diag:etale-diagram}
    \[
     \begin{tikzcd}
      H^{2d+1}_{\mot}(X,2d-j)_0 \times H^{2d}_{\mot}(X,d)
      \arrow["\pi_*\circ \, \cup"]{r}
      \arrow["\ret \times \ret"]{d} &
      H^1_{\mot}(K,d-j) \arrow[d,"\ret"] \\
      H_{\et}^{2d+1}(X, \Q_p(2d-j))_0 \times H_{\et}^{2d}(X,\Q_p(d))
      \arrow["\pi_*\circ \, \cup"]{r} \arrow{d} &
      H^1(K, \Q_p(d-j)) \arrow[equal]{d} \\
      \begin{array}{l}
       H^1(K,H_{\et}^{2d}(X_{\overline{K}}, \Q_p(2d-j))) \\[0.5ex]
       \qquad \times\ H^0(K,H_{\et}^{2d}(X_{\overline{K}},\Q_p(d)))
      \end{array} \arrow["\pi_*\circ \, \cup"]{r} &
      H^1(K,\Q_p(d-j))
     \end{tikzcd}
    \]
   \end{mdframed}
  \end{figure}

  In many cases the homological triviality of classes in $H^{2d+1}_{\mot}(X,2d-j)_0$ is automatic. For example, suppose first that $K$ is a number field. If $X$ is smooth and proper over $K$, then $H^{2d+1}_{\et}(X_{\overline{K}},\Q_p(2d-j))$ is a pure $p$-adic representation of $G_K$ of weight $1+2j-2d$ (i.e.~the geometric Frobenii at good primes $\ell$ have eigenvalues of absolute value $\ell^{(1 + 2j - 2d)}$). Since $1+2(j-d) < 0$, there cannot be any morphisms from the trivial representation (which is pure of weight zero) to it. Therefore
  \[
   H^0(K,H^{2d+1}_{\et}(X_{\overline{K}},\Q_p(2d-j))) = 0.
  \]

  A similar argument applies if $K$ is a local field, and $X$ has a smooth proper model over the ring of integers of $K$. If $X$ does not have such a model we do not have good control of ``purity'' for the \'etale cohomology (absent a proof of the weight-monodromy conjecture); but if $K = \Q_p$ and $X$ is the base-extension of a $\Q$-variety, any class in the image of the motivic cohomology over $\Q$ is homologically trivial, by the preceding argument. This is the strategy we will use in Subsection~\ref{subsec:p-adic-argument}.

  When $X$ is smooth and affine (but not necessarily proper), then Artin's vanishing theorem for the \'etale cohomology gives
  \[
   H^i_{\et}(X_{\overline{K}},\Q_p) = 0 \quad \forall i > 2d.
  \]
  Thus any class in $H_{\et}^{2d+1}(X,\Q_p(2d-j))$ is homologically trivial, and we have:

  \begin{proposition}
   \label{prop:morphism-for-affine-etale}
   If $X$ is smooth and affine, then there is a morphism
   \[
    H_{\et}^{2d+1}(X,\Q_p(2d-j)) \to H^1(K,H_{\et}^{2d}(X_{\overline{K}},\Q_p(2d-j))).\myqed
   \]
  \end{proposition}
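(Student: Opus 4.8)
The plan is to re-run, in the affine setting, exactly the portion of the preceding argument for the proper case that establishes the existence of an edge map into $E_2^{1,2d}$, checking only that the one input it relied on---the vanishing of $H^0(K,H^{2d+1}_{\et}(X_{\overline{K}},\Q_p(2d-j)))$---still holds. In the proper case this vanishing came from purity of the Galois representation $H^{2d+1}_{\et}(X_{\overline{K}},\Q_p(2d-j))$ of negative weight; for affine $X$ there is no purity available, but we get the vanishing for a cheaper reason, namely that $H^{2d+1}_{\et}(X_{\overline{K}},\Q_p)$ is itself zero because $X_{\overline{K}}$ is an affine variety of dimension $2d$ and so has étale cohomological dimension $\leq 2d$ (Artin vanishing). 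Hence a fortiori $H^0(K,-)$ of the Tate twist vanishes.

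First I would recall the Hochschild-Serre spectral sequence $E_2^{ij}=H^i(K,H^j_{\et}(X_{\overline{K}},\Q_p(n)))\Rightarrow H^{i+j}_{\et}(X_K,\Q_p(n))$ for $n=2d-j$, and set $H=H^{2d+1}_{\et}(X_K,\Q_p(2d-j))$. Next, invoke Artin vanishing for the affine variety $X_{\overline K}$ of dimension $2d$ to conclude $H^{2d+1}_{\et}(X_{\overline{K}},\Q_p)=0$, hence $E_2^{0,2d+1}=H^0(K,H^{2d+1}_{\et}(X_{\overline{K}},\Q_p(2d-j)))=0$. Then the identical two-line computation from the proper case applies verbatim: from $H/F^1 H=E_\infty^{0,2d+1}\hookrightarrow E_2^{0,2d+1}=0$ we get $F^1 H=H$, and composing the surjection $H\twoheadrightarrow E_\infty^{1,2d}=F^1H/F^2H$ with the inclusion $E_\infty^{1,2d}\hookrightarrow E_2^{1,2d}$ produces the desired canonical map $H^{2d+1}_{\et}(X,\Q_p(2d-j))\to H^1(K,H^{2d}_{\et}(X_{\overline{K}},\Q_p(2d-j)))$.

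The only genuine point to verify is the cohomological-dimension bound, and this is precisely the affine analogue of what the paper already noted just before the statement---that de Rham and Betti cohomology of a smooth affine variety vanish above its dimension---transported to the $\ell$-adic (here $p$-adic) setting; no properness, no purity, and no restriction on $j$ is needed, which is why the statement of the proposition carries no hypothesis on $j$ beyond $0\leq j\leq d-1$ inherited from the running setup. I do not expect any real obstacle: the substance is entirely in the observation that Artin vanishing replaces the weight argument, after which the spectral-sequence manipulation is formal and word-for-word the same as in the proper case. (The fact that the resulting map is a morphism rather than an isomorphism, as in Proposition~\ref{prop:isomorphism-for-affine-deligne}, reflects that in the étale setting we have only the edge map $E_\infty^{1,2d}\hookrightarrow E_2^{1,2d}$ and no reason for $E_\infty^{1,2d}=E_2^{1,2d}$; nothing more needs to be said.)
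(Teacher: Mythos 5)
Your proof is correct and follows exactly the route the paper takes: replace the purity/weight argument with Artin vanishing for the affine $2d$-dimensional variety $X_{\overline K}$ to get $H^{2d+1}_{\et}(X_{\overline{K}},\Q_p)=0$, hence $E_2^{0,2d+1}=0$, and then run the identical Hochschild–Serre edge-map argument. The paper states the vanishing without naming it Artin vanishing, but the substance and the two-line spectral-sequence computation are the same.
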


\section{Cohomology of Kuga-Sato varieties}
\label{sec:cohomology-kuga-sato}

 In this section we introduce the key geometric objects of this paper, namely Kuga--Sato varieties, along with their cohomology; for further details see \cite{deligne69, scholl90} for the theory over $\Q$, and Conrad's appendix to \cite{BDP13} for integral models. For simplicity we assume henceforth that $N \geq 5$, so that all the objects we construct exist as varieties; the case of small $N$ can be handled using stacks, or (more concretely) by introducing full level structure at some auxiliary prime and taking invariants.

 \subsection{Motivic cohomology}

  Let $Y_1(N)$ be the moduli space of elliptic curves with a point of order $N$. The curve $Y_1(N)$ has a model over $\Q$, whose $\C$-points are $\Gamma_1(N) \backslash \HP$ (associating to $\tau \in \HP$ the pair $(\C / (\Z\tau + \Z), \tfrac{1}{N} \bmod \Z\tau + \Z)$). Notice that with our choice of model for $Y_1(N)$, the cusp at infinity is only defined over $\Q(\mu_N)$, not over $\Q$.

  Let $\Ell \xrightarrow{\pi} Y_1(N)$ be the universal elliptic curve. For every $d\in\N_{\geq 1}$ we can form the $d$-th fibre product of $\Ell$ over the modular curve:
  \[
   \Ell^d \xrightarrow{\pi_d} Y_1(N).
  \]
  The variety $\Ell^d$ is $d$-dimensional over $Y_1(N)$ and $(d+1)$-dimensional over $\Spec \Q$. We can further form the fibre product $\Ell^d \times \Ell^d$, where this time we consider both copies of $\Ell^d$ as varieties over $\Spec \Q$. We have then a commutative diagram:
  \[
   \begin{tikzcd}
    \Ell^d \arrow[hook]{r} \arrow["\pi_d"]{d} & \Ell^d \times \Ell^d \arrow["\pi_d \times \pi_d"]{d} \\
    Y_1(N) \arrow[hook]{r} & Y_1(N) \times Y_1(N)
   \end{tikzcd}
  \]

  Let $X_1(N)$ be the Baily--Borel compactification of $Y_1(N)$, and $\overline{\Ell} \to X_1(N)$ the universal generalised elliptic curve. In~\cite{deligne69} Deligne proved the existence of a \emph{canonical} desingularisation which we denote by $W_d$:
  \[
   W_d \to \overline{\Ell}^d \to X_1(N).
  \]
  The variety $W_d$ is smooth and projective over $\Q$ (although the structural morphism $\overline{\pi} \colon W_d \to X_1(N)$ is not smooth). 

  \begin{remark}
   \label{remark:hecke-ops}
   Since the desingularisation is canonical, the Hecke correspondences on $X_1(N)$ associated to the Hecke operators extend as correspondences on $W_d$, as explained in~\cite[§4]{scholl90}. In particular they act on the cohomology of $W_d$.
  \end{remark}

  \begin{definition}
   The \emph{Kuga--Sato variety} $\KS_d$ is the self-fibre product of $W_d$ over the base scheme $\Spec \Q$:
   \[
    \KS_d \to X_1(N)^2, \quad \KS_d = W_d \times_{\Spec \Q} W_d.
   \]
  \end{definition}

  \subsection{Cohomology with coefficients}

  Let $\theory\in\{\et,\dR,\mathrm{Betti},\del,\syn,\rig\}$ be a cohomology theory. For each of these theories we have a notion of ``coefficient sheaf'', and we write $\Q_{\theory}$ for the constant coefficient sheaf. Let $\HS_{\theory}$ be the sheaf on $Y_1(N)$ defined as $(R^1\pi_* \Q_{\theory})(1)$, where $\pi: \Ell \to Y_1(N)$ is the structure map as above.

  Let $S_d$ be the group of permutations on $d$ objects. There is an obvious action of $S_d$ on the fibre product $\Ell^d$ given by permuting the coordinates. Likewise, the group $\mu_2$ acts on $\Ell$ as the multiplication by $-1$, and this of course extends to an action of $\mu_2^d$ on $\Ell^d$. We gather these together into an action of the group $\mu_2^d \rtimes S_d = \ideal{I}_d$.

  \begin{definition}
   The character $\epsilon_d$ of $\ideal{I}_d$ is defined by:
   \[
    \epsilon_d \colon \ideal{I}_d \to \mu_2, \quad (a_1,\ldots,a_d,\sigma) \mapsto a_1\cdots a_d \cdot \sgn(\sigma).
   \]
  \end{definition}

  \begin{proposition}[{\cite[Proposition~4.1.1]{scholl90}}]
   For every cohomology theory $\theory\in\{\et,\dR,\allowbreak\mathrm{Betti},\allowbreak\del,\syn,\rig\}$, there is an isomorphism:
   \[
    H_{\theory}^i(Y_1(N),\TSym^k \HS_{\theory}(j)) \simeq H_{\theory}^{i+k}(\Ell^k, \Q_{\theory}(j+k))(\epsilon_k)
   \]
   and similarly
   \[
    H^i_{\theory}(Y_1(N)^2, \TSym^{[k,k]} \HS_{\theory}(j)) = H^{i+2k}_{\theory}(\Ell^k \times \Ell^k,\Q_{\theory}(j+2k))(\epsilon_k, \epsilon_k).
   \]
  \end{proposition}

  This can be extended to include motivic cohomology $\theory = \mot$ using the formalism of relative Chow motives as in \cite{ancona15}; and the regulator maps commute with the action of the character $\epsilon_d$, so they go through to
  \[
   \regulator{\theory} \colon H^i_{\mot}(Y_1(N),\TSym^k \HS_{\mot}(j)) \to H^i_{\theory}(Y_1(N),\TSym^k \HS_{\theory}(j)),
  \]
  and likewise for $Y_1(N)^2$.
  
  For $X_1(N)$ we cannot make sense of $\HS_{\mot}$ as a relative Chow motive (since the morphism $\overline{\pi}$ is not smooth); but the projectors $\epsilon_d$ still act on $W_d$, so we can make the following definition:

  \begin{definition}
   Define motivic cohomology groups as follows:
   \[
    \text{``} H^i_{\mot}(X_1(N)^2,\TSym^{[k,k]} \HS_{\mot}(j))\text{''} =
    H^{i+2k}_{\mot}(W_k\times W_k,\Q(j+2k))(\epsilon_k,\epsilon_{k}).  \]
  \end{definition}

  We emphasise that in our approach this is a definition, not a theorem; but it is consistent with the theorems of Scholl quoted above for the open variety $Y_1(N)$.

 \subsection{Motives for cusp forms and Rankin--Selberg convolution}
  \label{subsec:motives}

  In this subsection we identify ``pieces'' of various cohomology groups that are naturally associated to cusp forms and their convolutions. This is important for our work, since these objects are smaller than the whole cohomology groups but still retain all the information we need. We follow~\cite[§4]{scholl90}.

  Recall that by the remark at page~\pageref{remark:hecke-ops}, the Hecke operators act on the cohomology of $W_k$ and $W_k\times W_{k'}$. For any Hecke operator $T$, denote by $T'$ the dual Hecke operator, i.e.\ its adjoint with respect to the Petersson inner product. When $\ell$ is a prime not dividing the level, $T_\ell' = T_\ell\langle \ell \rangle^{-1}$.

  From now on, we denote by $\etbar$ étale cohomology over $\overline{\Q}$.
  \begin{definition}
   Let $\theory \in \{\dR, B, \etbar, \rig\}$, assuming $p \nmid N$ if $\theory = \rig$. Define:
   \begin{itemize}
    \item $M_{\theory}(f)$ as the maximal $K_f\otimes \Q_{\theory}$-submodule of
    \[ H^{k+1}_{\theory}(\Ell^k,\Q_{\theory})(\epsilon_k) \otimes_{\Q} K_f \]
    on which the Hecke operators $T_\ell$ act as multiplication by $a_\ell(f)$ for all primes $\ell$;
    \item $M_{\theory}(f)^*$ as the maximal quotient of
    \[ H^{k+1}_{\theory}(\Ell^k,\Q_{\theory}(k+1))(\epsilon_k) \otimes_{\Q} K_f \]
    on which the dual Hecke operators $T_\ell'$ act as multiplication by $a_\ell(f)$ for all primes $\ell$.
   \end{itemize}
   We shall abuse notation slightly by writing $M_{\et}(f)$ rather than $M_{\etbar}(f)$, and similarly $M_{\et}(f)^*$.
  \end{definition}

  Both $M_{\theory}(f)$ and $M_{\theory}(f)^*$ are $2$-dimensional over $\Q_\theory \otimes K_f$, and Poincar\'e duality identifies $M_{\theory}(f)^*$ with the dual of $M_{\theory}(f)$, justifying the notation.

  The different realisations enjoy extra structure:
  \begin{itemize}
   \item $M_{\dR}(f)$ is a filtered $K_f$-vector space;
   \item $M_{\rig}(f)$ has a Frobenius $\varphi$;
   \item $M_B(f)$ is a pure Hodge structure over $K_f$ of weight $k+1$, whose only non-zero graded pieces are in bidegrees $(0, k+1)$ and $(k+1, 0)$;
   \item $M_{\et}(f)$ is a pure $p$-adic Galois representation of weight $k+1$.
  \end{itemize}

  An important property that these modules enjoy is that they can be found in several cohomology groups. Indeed, one can give the same definition for the compactified variety $W_k$, and it turns out that the resulting objects are canonically isomorphic to those we defined. Additionally, $M_{\theory}(f)$ lifts to the compactly supported cohomology of $\Ell^k$. To sum up, there are \emph{canonical} isomorphisms:
  \[
   \begin{tikzcd}[column sep=small]
    M_{\theory}(f) \arrow[<->,"\simeq"]{r} \arrow[hook]{d} &
    M_{\theory}(f) \arrow[<->,"\simeq"]{r} \arrow[hook]{d} &
    M_{\theory}(f) \arrow[hook]{d} \\
    H_{c,\theory}^{k+1}(\Ell^k, \Q_{\theory})(\epsilon_k) \ar[r] &
    H_{\theory}^{k+1}(W_k, \Q_{\theory})(\epsilon_k) \ar[r] &
    H_{\theory}^{k+1}(\Ell^k, \Q_{\theory})(\epsilon_k)
   \end{tikzcd}
  \]
  and similarly for $M_\theory(f)^*$.

  We also have comparison isomorphisms linking the various realisations:
  \begin{align}
   M_B(f) \otimes \C &\simeq M_{\dR}(f) \otimes \C, \tag{De Rham}\\
   M_{\rig}(f) \otimes \Bcrys &\simeq M_{\dR}(f)_{\Q_p} \otimes \Bcrys \simeq M_{\et}(f)_{\Q_p} \otimes \Bcrys, \tag{$C_{\mathrm{crys}}$} \\
   M_{\rig}(f) \otimes \BdR &\simeq M_{\dR}(f)_{\Q_p} \otimes \BdR \simeq M_{'et}(f)_{\Q_p} \otimes \BdR \tag{$C_{\dR}$}.
  \end{align}

  For the Rankin convolution one defines the motives using the above ones as building blocks.
  \begin{definition}
   Let $\theory\in\{\dR,B,\etbar,\rig\}$. Define:
   \begin{itemize}
    \item $M_{\theory}(f\otimes g) = M_{\theory}(f) \otimes M_{\theory}(g)$;
    \item $M_{\theory}(f\otimes g)^* = M_{\theory}(f)^* \otimes M_{\theory}(g)^*$.
   \end{itemize}
   Both $M_{\theory}(f\otimes g)$ and $M_{\theory}(f\otimes g)^*$ are $4$-dimensional, as the building blocks have dimension $2$.
  \end{definition}

  Similar remarks apply to these spaces: they lift to the cohomology of $W_k \times W_{k'}$, or to the compactly supported cohomology of $\Ell^k \times \Ell^{k'}$; and they are related by the comparison theorems.

  Notice that by functoriality the projections
  \begin{gather*}
   \pr_f \colon H^{k+1}_{\theory}(W_k,\Q_{\theory}(k+1)) \twoheadrightarrow M_{\theory}(f)^*, \\
   \pr_{f,g} \colon H^{k+k'+2}_{\theory}(W_k\times W_{k'},\Q_{\theory}(2+k+k')) \twoheadrightarrow M_{\theory}(f\otimes g)^*.
  \end{gather*}
  are compatible with the extra structures on each cohomology theory. More precisely:

  \begin{itemize}
   \item for the Betti realisation, the maps $\pr_{f,g}$ are morphisms of pure Hodge structures, so we have maps of $\Ext$ groups (for any $i, j$)
    \[
     \Ext_{\MH_\R}^i(\R, H^{k+k'+2}_B(W_k\times W_{k'},\R(2+k+k' - j)))
      \xrightarrow{\pr_{f,g}} \Ext^i_{\MH_\R}(\R, M_B(f\otimes g)^*(-j)).
    \]
    \item for the \'etale realisation, there are maps in Galois cohomology
    \[
     H^i(\Q, H^{k+k'+2}_{\et}((W_k\!\times\! W_{k'})_{\overline{\Q}},\Q_p(2+k+k'-j))) \xrightarrow{\pr_{f,g}} H^i(\Q, M_{\et}(f\otimes g)^*(-j)).
    \]
  \end{itemize}

 \subsection{The case $f = g$}
  \label{sect:feqg}

  When $f=g$, $M_{\theory}(f\otimes f)$ enjoys the standard decomposition in symmetric and antisymmetric tensors:
  \[
   M_{\theory}(f\otimes f) \simeq \Sym^2 M_{\theory}(f) \oplus \wedge^2 M_{\theory}(f).
  \]
  Let $s \colon M_{\theory}(f\otimes f) \to M_{\theory}(f\otimes f)$ be the involution swapping the components of the tensor product, then the above are by definition the $s=1$ and $s=-1$ eigenspaces. It is a standard fact that $\Sym^2 M_{\theory}(f)$ is $3$-dimensional, while $\wedge^2 M_{\theory}(f)$ is $1$-dimensional. This decomposition mirrors~\eqref{eq:decomposition-rho} in various cohomology theories, in particular it is exactly that decomposition of Galois representations when $\theory = \etbar$. Correspondingly we have:
  \begin{equation}
   \label{eq:decomposition-m}
   M_{\theory}(f\otimes f)^* \simeq \Sym^2 M_{\theory}(f)^* \oplus \wedge^2 M_{\theory}(f)^*.
  \end{equation}

  \begin{remark}
   Note that the automorphism $\rho'$ of $W_k \times W_k$ which swaps the two factors acts on $M_{\theory}(f \otimes f)$ (or its dual) not as $s$, but rather as $(-1)^{(k+1)} s$. This is because the identification of $M_{\theory}(f) \otimes M_\theory(f)$ with a subspace of the cohomology of $W_k \times W_k$ is given by the cup product pairing, which is graded-commutative rather than commutative, and $M_{\theory}(f)$ lives in cohomological degree $k + 1$. Similarly, the symmetry involution $\rho$ of $Y_1(N)^2$ (treating the coefficient sheaves as living in degree 0) acts as $(-1)^k \rho' = -s$.
  \end{remark}

  Composing the projections $\pr_{f, f}$ with the projections induced by the direct sum decomposition~\eqref{eq:decomposition-m}, we obtain maps
  \[
   \begin{tikzcd}
    \Ext_{\MH_\R}^i(\R, H^{2k+2}_B(\KS_k,\Q(2k+2))) \arrow["\pr_{f,f}"]{d} & \Ext^i_{\MH_\R}(\R, \wedge^2 M_B(f)^*) \\
    \Ext^i_{\MH_\R}(\R, M_B(f\otimes f)^*) \arrow{r} \arrow{ur} &
    \Ext^i_{\MH_\R}(\R,\Sym^2 M_B(f)^*)
   \end{tikzcd}
  \]
and the analogous ones in the category of Galois modules:
\[
    \begin{tikzcd}
        H^i(\Q,H^{2k+2}_{\et}(\KS_{k,\overline{\Q}},\Q_p(2k+2))) \arrow["\pr_{f,f}"]{d} & H^i(\Q,\wedge^2 M_{\etbar}(f)^*)\\
        H^i(\Q,M_{\et}(f\otimes f)^*) \arrow{r} \arrow{ur} & H^i(\Q,\Sym^2 M_{\et}(f)^*)
    \end{tikzcd}
\]

 \subsection{De Rham cohomology and modular forms}
\label{sec:de-rham-kuga-sato}

  In this subsection we derive explicit descriptions for the de Rham cohomology of Kuga-Sato varieties. To start with, we have the following description of the canonical filtration restricted to the $\epsilon_d$-eigenspaces, as explained in~\cite[202--205]{kato04}:
  \[
   \Fil^i H^{k+1}_{\dR}(W_k,\C)(\epsilon_d) =
   \begin{cases}
    H^{k+1}_{\dR}(W_k, \C)(\epsilon_k) &\text{$i\leq 0$} \\
    S_{k+2}(N) &\text{$0<i\leq k+1$} \\
    0 &\text{$i>k+1$}
   \end{cases}
  \]
  which implies that we have cusp forms spaces in degree $k+1$ while all the rest is concentrated in degree $0$. Thus the Hodge decomposition is given by
  \[
   H^{k+1,0}_{\dR}(W_k,\C)(\epsilon_k) = S_{k+2}(N),\qquad H^{0,k+1}_{\dR}(W_k,\C)(\epsilon_k) = \overline{S_{k+2}(N)},
  \]
  and hence
  \begin{equation}
   \label{eq:cohomology-decompositions}
   H^{k+1}_{\dR}(W_k,\C)(\epsilon_k) \simeq S_{k+2}(N) \oplus \overline{S_{k+2}(N)}.
  \end{equation}

  For $\KS_k$, using the K\"unneth formula and \eqref{eq:cohomology-decompositions} we obtain
  \begin{subequations}
   \label{eq:kunneth-components}
   \begin{align}
    H^{2k+2,0}_{\dR}(\KS_k,\C)(\epsilon_k,\epsilon_k) &= S_{k+2}(N)\otimes S_{k+2}(N), \label{eq:hol-kunneth} \\
    H^{k+1,k+1}_{\dR}(\KS_k,\C)(\epsilon_k,\epsilon_k) &= (S_{k+2}(N)\otimes \overline{S_{k+2}(N)}) \oplus (\overline{S_{k+2}(N)} \otimes S_{k+2}(N)), \label{eq:middle-kunneth}\\
    H^{0,2k+2}_{\dR}(\KS_k,\C)(\epsilon_k,\epsilon_k) &= \overline{S_{k+2}(N)} \otimes \overline{S_{k+2}(N)}. \label{eq:nonhol-kunneth}
   \end{align}
  \end{subequations}

\section{The Beilinson--Flach Euler system}

 In this section we define the cohomology classes that we will use as ingredients to construct our motivic classes. We first introduce Beilinson--Flach classes and explain how they can be extended to the cohomology of Kuga-Sato varieties, and then exploit some differential forms attached to cusp forms.

 \subsection{From Eisenstein to Beilinson--Flach classes}

  In this subsection we introduce the Beilinson--Flach classes, constructed in~\cite{leiloefflerzerbes14,KLZ20} as the push forward of Eisenstein classes (constructed by Beilinson) from the motivic cohomology of modular curves, to that of the product of two modular curves.

  \begin{definition}[{\cite[Definition~5.3.1]{KLZ20}}]
   The Beilinson--Flach classes defined over $\Q$ are motivic classes
   \[
    \BF^{[k,k',j]}_{\mot,N} \in H^3_{\mot}(Y_1(N)^2,\TSym^{[k,k']} \HS_{\mot}(\Ell)(2-j)).
   \]
   We will suppress the subscript $N$, since variation in $N$ plays no role in this paper. For any theory $\theory\in\{\et,\dR,\mathrm{Betti},\del,\syn,\rig\}$ denote $\BF^{[k,k',j]}_{\theory} = \regulator{\theory}(\BF^{[k,k',j]}_{\mot})$.
  \end{definition}

  We note that in the case of parallel weights $k = k'$, the involution $\rho'$ induced by swapping the two factors of $W_k \times W_k$ (see \S \ref{sect:feqg} above) acts on $\BF^{[k,k,j]}_{\mot,N}$, and hence also on its realizations in other theories $\theory$, as $(-1)^{k + j}$. \footnote{Compare \S 5 of \cite{KLZ17}, where the statement is that the symmetry involution $\rho$ of $Y_1(N)^2$ acts as $(-1)^j$. As we have seen above, $\rho' = (-1)^k \rho$.}

  \begin{remark}
   If $N < 4$, then $Y_1(N)$ does not exist as a moduli space; but we can still define $\BF^{[k,k',j]}_{\mot, N}$ by working with auxiliary level structure and taking invariants, \emph{unless} $N = 1$ and $j = k = k'$, since in this case there is no sensible definition of the Siegel unit ${}_c g_{0, 1/N}$.
  \end{remark}

 \subsection{Compactification of Beilinson--Flach classes}
 \label{subsec:compactification-bf}

  In this subsection we recall results of Brunault and Chida \cite[§7-8]{brunaultchida16} which will allow us to lift the Beilinson--Flach classes from the open Kuga--Sato variety $\Ell^k \times \Ell^k$ to its compactification $\KS_k$, in order that we can apply the Bloch intersection pairing.

  We denote by $\hat{\Ell}^d$ the locus of $\overline{\Ell}^d$ where the projection to $X_1(N)$ is smooth, i.e.\ the Néron model of $\Ell^d$ over $X_1(N)$, and by $\hat{\Ell}^{d,*}$ the connected component of the identity. We write $Z^k = \hat{\Ell}^{k,*} \setminus \Ell^k$ (the preimage of the cusps in $\hat{\Ell}^{k,*}$).

  \begin{theorem}[{\cite[Proposition~8.1]{brunaultchida16}}]
   \label{thm:brunault-chida}
   For every quadruple $(i,d,d',j) \in \N^4$ with $d,d' \geq 1$, we have
   \begin{align*}
    H_{\mot}^i(W_d \times W_{d'}, \Q(j))(\epsilon_d,\epsilon_{d'}) &\simeq H_{\mot}^i(\hat{\Ell}^{d,*} \times \hat{\Ell}^{d',*}, \Q(j))(\epsilon_d,\epsilon_{d'})
    \intertext{in particular}
    H_{\mot}^i(\KS_d, \Q(j))(\epsilon_d,\epsilon_d) &\simeq H_{\mot}^i(\hat{\Ell}^{d,*} \times \hat{\Ell}^{d,*}, \Q(j))(\epsilon_d,\epsilon_d).
   \end{align*}
  \end{theorem}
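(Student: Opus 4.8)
The plan is to deduce the isomorphism from the localisation long exact sequences in motivic cohomology attached to the open immersion
\[
	j\colon \hat{\Ell}^{d,*}\times\hat{\Ell}^{d',*} \hookrightarrow W_d\times W_{d'},
\]
together with the observation that, since we work with $\Q$-coefficients, passing to the $(\epsilon_d,\epsilon_{d'})$-isotypic component for the action of the finite group $\ideal{I}_d\times\ideal{I}_{d'}$ is an exact functor. Thus it suffices to show that every boundary term appearing in these sequences --- that is, the motivic cohomology (Tate-twisted and shifted as dictated by codimensions) of the closed complement $W_d\times W_{d'}\setminus(\hat{\Ell}^{d,*}\times\hat{\Ell}^{d',*})$ and of all the smooth strata in a stratification of it --- vanishes after projection onto the $(\epsilon_d,\epsilon_{d'})$-isotypic part. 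Since the complement of a product of opens is $\bigl((W_d\setminus\hat{\Ell}^{d,*})\times W_{d'}\bigr)\cup\bigl(W_d\times(W_{d'}\setminus\hat{\Ell}^{d',*})\bigr)$, an inclusion--exclusion (Mayer--Vietoris) argument over these two pieces and their intersection reduces everything to the single-factor statement: the $\epsilon_d$-isotypic part of the motivic cohomology of $W_d\setminus\hat{\Ell}^{d,*}$, and of each stratum of its natural stratification, is zero.

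Next I would analyse this complement fibrewise over $X_1(N)$. Over $Y_1(N)$ both $W_d$ and $\hat{\Ell}^{d,*}$ restrict to $\Ell^d$, so $W_d\setminus\hat{\Ell}^{d,*}$ is supported over the finitely many cusps. Over a single cusp the fibre of $\overline{\Ell}^d$ is the $d$-fold product of Néron $1$-gons (nodal cubics, each $\cong\mathbb{P}^1/(0\sim\infty)$), the fibre of $\hat{\Ell}^{d,*}$ is the identity component $\mathbb{G}_m^{\,d}$ of the Néron model, and the fibre of $W_d$ is Deligne's canonical desingularisation of this product of $1$-gons. Hence $W_d\setminus\hat{\Ell}^{d,*}$ over that cusp is a union of explicit rational varieties built, via iterated blow-ups and $\mathbb{P}^1$- and $\mathbb{G}_m$-bundles, out of products of copies of $\mathbb{G}_m$ and $\mathbb{P}^1$; all of these strata are mixed Tate, so their motivic cohomology is completely computable, and I would track through the desingularisation the induced action of $\ideal{I}_d=\mu_2^d\rtimes S_d$ (the $S_d$ permuting the $d$ degenerating factors, each $\mu_2$ acting by inversion on the corresponding $\mathbb{G}_m$, compatibly extended).

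The heart of the matter, and the step I expect to be the main obstacle, is then the explicit vanishing of the $\epsilon_d$-eigenspace on each cuspidal stratum. Concretely, on the boundary at least one factor (say the $d$-th) has fully degenerated, so the inversion $a_d\in\mu_2^d$ either acts trivially on the relevant mixed Tate classes or the permutation action of $S_d$ on the surviving classes factors through a quotient containing no sign subrepresentation; in either case the $\epsilon_d$-isotypic part is $0$. This is the motivic refinement of the classical fact of Deligne and Scholl that $H^\bullet(W_d)(\epsilon_d)$ ``does not see the cusps'' (equivalently, that Scholl's idempotent correspondence cutting out $\epsilon_d$ annihilates the cohomology supported on the cuspidal fibres), so the representation-theoretic bookkeeping can be cross-checked against the known $\ell$-adic and de Rham statements; but the computation must genuinely be redone with $\Q(j)$-coefficients in motivic cohomology, where one cannot simply invoke weights.

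Finally, feeding this vanishing into the localisation long exact sequences and applying the five lemma in each fixed degree $i$ and twist $j$ (after projecting to isotypic parts, which is exact) shows that the restriction map
\[
	H^i_{\mot}(W_d\times W_{d'},\Q(j))(\epsilon_d,\epsilon_{d'}) \xrightarrow{\ \sim\ } H^i_{\mot}(\hat{\Ell}^{d,*}\times\hat{\Ell}^{d',*},\Q(j))(\epsilon_d,\epsilon_{d'})
\]
is an isomorphism for all $(i,d,d',j)$, and specialising to $d'=d$ gives the displayed statement for $\KS_d$.
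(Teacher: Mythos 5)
The paper does not prove this statement: it is cited verbatim as Brunault--Chida's Proposition~8.1, so there is no in-paper argument to compare against, and the question is whether your reconstruction stands on its own. The overall shape — localisation along $\hat{\Ell}^{d,*}\times\hat{\Ell}^{d',*}\hookrightarrow W_d\times W_{d'}$, exactness of the $\epsilon$-projection over $\Q$, and reduction to vanishing of the cuspidal contribution after projection — is the right strategy and matches the cited source. But two links in the chain are missing, and you flag only the second.

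First, your Mayer--Vietoris reduction to a single-factor statement silently uses a Künneth decomposition that motivic cohomology does not possess for arbitrary varieties. Knowing that $H^*_{\mot}(W_d\setminus\hat{\Ell}^{d,*},\Q(*))(\epsilon_d)=0$ does not by itself yield $H^*_{\mot}\bigl((W_d\setminus\hat{\Ell}^{d,*})\times W_{d'},\Q(*)\bigr)(\epsilon_d,\epsilon_{d'})=0$, because $W_{d'}$ is not mixed Tate and the motivic cohomology of a product does not split along the factors in general. What the reduction actually requires is the stronger statement that the $\epsilon_d$-idempotent annihilates the compactly-supported Chow motive of each boundary stratum, so that it tensors to zero against $M(W_{d'})$. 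Second — and you say this yourself — the core geometric input, that this projector kills every stratum of the toric boundary of $W_d$ over each cusp, is left as a heuristic rather than a computation. This is precisely the non-formal content (going back to Deligne and Scholl, redone with motivic coefficients by Brunault and Chida by an explicit stabiliser argument on the boundary strata) that the whole scheme cannot do without, and it is not something one gets by ``tracking the $\ideal{I}_d$-action'' at the level of a plan. Your proposal correctly locates the hard step but does not carry it out; as written it is an outline of the right proof rather than a proof.
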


  We now write
  \begin{align*}
   \Ell^{k,k'} &= \Ell^k \times \Ell^{k'}, &\; \hat{\Ell}^{k,k',*} &= \hat{\Ell}^{k,*}\times \hat{\Ell}^{k',*}, \\
   Z^{k,k'} &= Z^k\times Z^{k'}, & U^{k,k'} &= \hat{\Ell}^{k,k',*} \setminus Z^{k,k'}.
  \end{align*}
  Note $U^{k,k'}$ is strictly larger than $\Ell^k \times \Ell^{k'}$; it contains both $Z^k \times \Ell^{k'}$ and $\Ell^k \times Z^{k'}$. However, the composite map
  \[
      \Ell^{k+k'} \hookrightarrow \Ell^k \times \Ell^{k'} \hookrightarrow U^{k,k'}
  \]
  is still a closed embedding.
Brunault and Chida define lifted classes $\lBF^{[k,k',j]}_{\mot}$ as the push-forward of Eisenstein classes to the cohomology of $U^{k,k'}$ rather than of $\Ell^{k,k'}$. The lifted classes then live in $H^{3+k+k'}_{\mot}(U^{k,k'},2+k+k'-j)$ and pull-back to the standard ones. The localisation sequence $Z^{k,k'} \to \hat{\Ell}^{k,k',*} \to U^{k,k'}$ induces a long exact sequence
\begin{multline*}
    \cdots\to H^{3+k+k'}_{\mot}(\hat{\Ell}^{k,k',*},2+k+k'-j)(\epsilon_k,\epsilon_{k'}) \\
    \to H^{3+k+k'}_{\mot}(U^{k,k'},2+k+k'-j)(\epsilon_k,\epsilon_{k'}) \\
    \xrightarrow{\Res} H^{k+k'}_{\mot}(Z^{k,k'},k+k'-j)(\epsilon_k,\epsilon_{k'}) \to\cdots
\end{multline*}
The residue $\Res(\lBF^{[k,k',j]}_{\mot})$ is non-zero only when $j=0$. On one hand, for $j>0$ by exactness the lifted class has a preimage in the first group. On the other hand, when $j=0$ one deduces the same thesis by diagram chasing: the ``cuspidal embedding'' $i_{\mathrm{cusp}} \colon Z^k \times \Ell^{k'} \hookrightarrow U^{k,k'}$ induces a push-forward morphism
\[
    H^{1+k+k'}_{\mot}(Z^k \times \Ell^{k'}, 1+k+k') \xrightarrow{i_{\mathrm{cusp},*}} H^{3+k+k'}_{\mot}(U^{k,k'},2+k+k').
\]
It turns out that there exists an element $\xi_{\beta} \in H^{k+k'+1}_{\mot}(Z^k \times \Ell^{k'}, k+k'+1)(\epsilon_k,\epsilon_{k'})$ satisfying $\Res\circ i_{\mathrm{cusp},*}(\xi_{\beta}) = \Res(\lBF^{[k,k',0]}_{\mot})$. This shows that the element
\begin{equation}
    \label{eq:brunault-chida-lift}
    \begin{cases} \lBF^{[k,k',0]}_{\mot} - i_{\mathrm{cusp},*}(\xi_{\beta}) &\text{if $j=0$}\\ \lBF^{[k,k',j]}_{\mot} &\text{if $j>0$} \end{cases} \tag{$\sharp$}
\end{equation}
has residue always equal to zero. We then denote by $\lcBF^{k,k',j}$ an arbitrary choice of a preimage of this element in $H^{3+k+k'}_{\mot}(\hat{\Ell}^{k,k',*},2+k+k'-j)(\epsilon_k,\epsilon_{k'})$. Recall now that Theorem~\ref{thm:brunault-chida} establishes the existence of an isomorphism
\[
    H_{\mot}^{3+k+k'}(W_k \times W_{k'}, \Q(j))(\epsilon_k,\epsilon_{k'}) \simeq H_{\mot}^{3+k+k'}(\hat{\Ell}^{k,*} \times \hat{\Ell}^{k',*}, \Q(j))(\epsilon_k,\epsilon_{k'})
\]
which shows that $\lcBF^{k,k',j}$ lifts to $W_k\times W_{k'}$, which is $\KS_k$ when $k=k'$. Reversing the argument shows that $\lcBF^{k,k',j}$ pulls back to $\BF^{[k,k',j]}_{\mot}$. Indeed, its pull back from $\hat{\Ell}^{k,k',*}$ to $\Ell^{k,k'}$ factors through $U^{k,k'}$. But by definition, its image in the cohomology of $U^{k,k'}$ is the class~\eqref{eq:brunault-chida-lift}, which clearly pulls back to $\BF^{[k,k',j]}_{\mot}$ as the cuspidal contribution maps to zero.

Hence $\lcBF^{k,k',j}$ is a lift of $\BF^{[k,k',j]}_{\mot}$ in the cohomology of $W_k\times W_{k'}$, as wanted. These motivic classes are the \emph{first version} of compactified Beilinson--Flach classes, as defined by Brunault and Chida. For any cohomology theory $\theory\in\{\et,\dR,\mathrm{Betti},\del,\syn,\rig\}$ denote $\lcBF^{k,k',j}_{\theory} = \regulator{\theory}(\lcBF^{k,k',j})$.
\begin{remark}
    \label{rmk:pairing-unchanged}
    Notice that the defect between $\BF^{[k,k',j]}_{\mot}$ and $\lcBF^{k,k',j}$ is supported purely on $Z^k\times \Ell^{k'}$. Therefore, its pairing with differential forms attached to cusp forms vanishes. Hence, as long as we stick to cusp forms, the classes $\lcBF^{k,k',j}$ and $\BF^{[k,k',j]}_{\mot}$ satisfy the same regulator formulæ. A precise statement is given in~\cite[Proposition~8.3]{brunaultchida16}, which we will recall later.
\end{remark}

We now define a second version of compactified Beilinson--Flach classes for the coincident weights case. Since the kernel of the map induced by $\hat{\Ell}^{k,k',*} \to U^{k,k'}$ is not guaranteed to be trivial, the class $\lcBF^{k,k',j}$ is not uniquely determined in general. Suppose now $k=k'$ and let $\rho'$ be the involution of $\KS_k = W_k\times W_k$ swapping the two components of the product.

 \begin{definition}
  The \emph{second version} of the motivic compactified Beilinson--Flach classes is
  \[
      \cBF^{k,k,j} = \frac{1}{2}(\lcBF^{k,k,j} + (-1)^{k+j}(\rho')^*(\lcBF^{k,k,j})) \in H^{3+2k}_{\mot}(\hat{\Ell}^{k,k,*},2+2k-j)(\epsilon_k,\epsilon_k).
  \]
  By the same argument as before, we regard $\cBF^{k,k,j}\in H^{3+2k}_{\mot}(\KS_k,2+2k-j)(\epsilon_k,\epsilon_k)$. For any cohomology theory $\theory\in\{\et,\dR,\mathrm{Betti},\del,\syn,\rig\}$ denote $\cBF^{k,k',j}_{\theory} = \regulator{\theory}(\cBF^{k,k',j})$.
 \end{definition}

 The element $\cBF^{k,k,j}$ is still a lifting of the non-compactified classes: its pull-back to $\Ell^k\times\Ell^k$ coincides with
 \[
  \frac{1}{2}(\BF^{[k,k,j]}_{\mot} + (-1)^{k+j}(\rho')^*\BF^{[k,k,j]}_{\mot}) = \frac{1}{2}(\BF^{[k,k,j]}_{\mot} + \BF^{[k,k,j]}_{\mot}) = \BF^{[k,k,j]}_{\mot}
 \]
 where we have used the fact that $(\rho')^*$ acts as $(-1)^{k+j}$ on $\BF^{k,k,j}$, as remarked in the previous section. Thus $\cBF^{k,k,j}$ is a lifting of $\BF^{k,k,j}$ compatible with the $\rho'$-action.

\subsection{Classes in the cohomology of \texorpdfstring{$f$}{f}}
In this subsection we explain how it is possible to pass from Beilinson--Flach classes associated to the triple $(k,k',j)$ to classes associated to the triple $(f,g,j)$. In the sequel we will mainly be interested in the case $f=g$. It is important to underline that the arguments of this subsection apply only to a restricted choice of cohomology theories, in particular they do \emph{not} apply to motivic classes.

\subsubsection{Classes in Deligne cohomology}
In Deligne cohomology we have classes $\lcBF^{k,k',j}_{\del}$. Following Subsection~\ref{sec:complex-diagram} there is an isomorphism
\begin{multline*}
    H^{3+k+k'}_{\del}(W_k \times W_{k'}, K_{f,g}(2+k+k'-j)) \\
    \xrightarrow{\simeq} \Ext^1_{\MH_{K_{f,g}}}(K_{f,g},H^{2+k+k'}_B(W_k\times W_{k'},K_{f,g}(2+k+k'-j)))
\end{multline*}
which we compose with the functorial morphism induced by $\pr_{f,g}$
\begin{multline*}
    \Ext^1_{\MH_{K_{f,g}}}(K_{f,g},H^{2+k+k'}_B(W_k\times W_{k'},K_{f,g}(2+k+k'-j))) \\
    \xrightarrow{\pr_{f,g}} \Ext^1_{\MH_{K_{f,g}}}(K_{f,g},M_B(f\otimes g)^*(-j)).
\end{multline*}
We define $\lcBF^{f,g,j}_{\del}$ as the image of $\lcBF^{k,k',j}_{\del}$ under this composition. When $k=k'$ we define $\cBF^{f,g,j}_{\del}$ as the image of $\cBF^{k,k,j}_{\del}$ under the same composition.

The first isomorphism still exists when we replace $W_k\times W_{k'}$ with $\Ell^k\times\Ell^{k'}$. This follows from the isomorphism expressing the cohomology of $\Ell^k\times\Ell^{k'}$ as the cohomology of $Y_1(N)^2$, which is affine, and by Proposition~\ref{prop:isomorphism-for-affine-deligne}. Indeed, $Y_1(N)^2$ has dimension $2$, hence its third Betti cohomology group is trivial, and obviously does not admit non-zero morphisms from the trivial structure. Furthermore, the morphism $\pr_{f,g}$ is defined from the $\Ext$-groups with coefficients in the cohomology of $\Ell^k\times\Ell^{k'}$ too. Hence the whole composition is well-defined even when starting from the Deligne cohomology of $\Ell^k\times\Ell^{k'}$. We then define $\BF^{[f,g,j]}_{\del}$ as the image $\BF^{[k,k',j]}_{\del}$ under it.

\subsubsection{Classes in étale cohomology}
In étale cohomology we have classes $\lcBF^{k,k',j}_{\et}$. Following Subsection~\ref{sec:p-adic-diagram} there is a morphism
\begin{multline*}
    H^{3+k+k'}_{\et}(W_k \times W_{k'}, \Q_p(2+k+k'-j)) \\
    \to H^1(\Q(\mu_N),H^{2+k+k'}_{\et}((W_k\times W_{k'})_{\overline{\Q}},\Q_p(2+k+k'-j)))
\end{multline*}
which we compose with the functorial morphism induced by $\pr_{f,g}$
\begin{multline*}
    H^1(\Q(\mu_N),H^{2+k+k'}_{\et}((W_k\times W_{k'})_{\overline{\Q}},\Q_p(2+k+k'-j))) \\
    \xrightarrow{\pr_{f,g}} H^1(\Q(\mu_N),M_{\etbar}(f\otimes g)^*(-j)).
\end{multline*}
We define $\lcBF^{f,g,j}_{\et}$ as the image of $\lcBF^{k,k',j}_{\et}$ under this composition. When $k=k'$ we define $\cBF^{f,g,j}_{\et}$ as the image of $\cBF^{k,k,j}_{\et}$ under the same composition.

The first morphism still exists when we replace $W_k\times W_{k'}$ with $\Ell^k\times\Ell^{k'}$. As before, this follows from the identification of the cohomology of $\Ell^k\times\Ell^{k'}$ and of $Y_1(N)^2$, which is affine, and by Proposition~\ref{prop:morphism-for-affine-etale} (again because the third cohomology group of the $2$-dimensional affine scheme $Y_1(N)^2$ is trivial when we base change to $\overline{\Q}$). Furthermore, the morphisms $\pr_{f,g}$ is defined from the Galois cohomology groups with coefficients in the cohomology of $\Ell^k\times\Ell^{k'}$ too. Hence the whole composition is well-defined also when starting from the étale cohomology of $\Ell^k\times\Ell^{k'}$. We then define $\BF^{[f,g,j]}_{\et}$ as the image of $\BF^{[k,k',j]}_{\et}$ under it.

From the above discussion, when $f=g$ we have three different classes in the cohomology of $M_{\bullet}(f\otimes f)^*(-j)$ for $\bullet\in\{B,\etbar\}$. Explicitly
\begin{align*}
    \BF^{[f,f,j]}_{\del},\; \lcBF^{f,f,j}_{\del},\; \cBF^{f,f,j}_{\del} &\in \Ext^1_{\MH_{K_f}}(K_f,M_B(f\otimes f)^*(-j)), \\
    \BF^{[f,f,j]}_{\et},\; \lcBF^{f,f,j}_{\et},\; \cBF^{f,f,j}_{\et} &\in H^1(\Q(\mu_N),M_{\etbar}(f\otimes f)^*(-j)).
\end{align*}

We record here a useful result which we will need in Subsection~\ref{subsec:integrality}.

\begin{proposition}
    \label{prop:classes-of-f-coincide}
    The three classes $\BF^{[f,f,k]}_{\et}$, $\lcBF^{f,f,k}_{\et}$ and $\cBF^{f,f,k}_{\et}$ coincide, as do the three classes $\BF^{[f,f,k]}_{\del}$, $\lcBF^{f,f,k}_{\del}$ and $\cBF^{f,f,k}_{\del}$.
\end{proposition}

\begin{proof}
 This follows from the fact that restriction from $\KS_k$ to the open variety $\Ell^{k, k}$ induces an isomorphism on the $(f, f)$-isotypic component (in Betti cohomology, and hence also in \'etale cohomology over $\overline{\Q}$).
\end{proof}

\subsubsection{Properties under swapping involutions}
\label{subsubsec:swapping}
We collect here some properties regarding the behaviour of both compactified and non-compactified Beilinson--Flach classes under three different involutions, when $f=g$.

For $\theory\in\{\del,\et\}$ the classes $\BF^{[f,f,j]}_{\theory}$ are in cohomology groups with coefficients in $M_{\theory'}(f\otimes f)^*(-j)$ for $\theory'\in\{B,\etbar\}$ respectively. As we have noted above, if $s$ is the involution swapping the two factors of $M_{\theory'}(f\otimes f) = M_{\theory'}(f) \otimes M_{\theory'}(f)$, then we have $s = - \rho^* = (-1)^{k+1} (\rho')^*$. So
\[
    s(\BF^{[f,f,j]}_{\theory}) = -\rho^* \BF^{[f,f,j]}_{\theory} = (-1)^{k+1}(\rho')^* \BF^{[f,f,j]}_{\theory} = (-1)^{j+1}\BF^{[f,f,j]}_{\theory}.
\]
Therefore, the parity of $j$ determines whether Beilinson--Flach classes take values in $\Sym^2 M_{\theory'}(f)^*$ or in $\wedge^2 M_{\theory'}(f)^*$: they land in $\wedge^2$ if $j$ is even, and $\Sym^2$ if $j$ is odd.

\begin{remark}
Controlling when Beilinson--Flach classes have coefficients in $\wedge^2 M_{\theory'}(f)^*$ is crucial for us. Indeed, we want to construct classes to compare with higher cyclotomic classes, drawing from the decomposition of Galois representations. Higher cyclotomic classes give values for the $L$-function associated to the representation $\wedge^2 \rho_{f,v}$, hence if we want to construct classes to compare them with, the only natural choice is the $\wedge^2$-component. The functional equation will allow us to propagate this information to the $\Sym^2$ component.
\end{remark}

\subsection{Differential forms attached to cusp forms}
\label{subsec:differential-forms}
In this subsection we seek a differential form to pair with the image of the compactified Beilinson--Flach classes under $\rD$ (see diagram in figure~\ref{diag:complex-compatibility}). Such a differential would belong to the cohomology group $H^{k+1,k+1}_{\dR}(\KS_k, \C)(\epsilon_k)$. In order to construct it we start from modular forms. Recall from Subsection~\ref{sec:de-rham-kuga-sato} that the middle Hodge component is isomorphic to $(S_{k+2}(N)\otimes \overline{S_{k+2}(N)}) \oplus (\overline{S_{k+2}(N)} \otimes S_{k+2}(N))$. Starting from cusp forms, the Eichler-Shimura isomorphism associates:
\begin{align*}
    S_{k+2}(N) &\xrightarrow{\simeq} \Fil^1 H^{k+1}_{\dR}(W_k,\C)(\epsilon_k) \\
    f &\mapsto \omega_f
\end{align*}
where $\omega_f$ is the unique $(k+1)$-form whose pull-back to $Y_1(N)$ (through $\Ell^k$) is
\[
    (2\pi i)^{k+1} f(\tau)\ud\tau w^{(k,0)} = (2\pi i)^k f(q) \frac{\ud q}{q} w^{(k,0)} \in H^{1}_{\dR}(Y_1(N),\TSym^k \HS_{\C}(\Ell)).
\]
Here we are denoting with $w = \ud z$ the standard section of $\HS_{\C}(\Ell)$, and $w^{(r,s)} = w^r \overline{w}^s \in \TSym^k \HS_{\C}(\Ell)$. This prescription does not only define a de Rham cohomology class, but a true differential form.

Since the Eichler-Shimura isomorphism is Hecke-equivariant, the form $\omega_f$ is in the $f$-isotypical component of the de Rham cohomology group. The dual of $\omega_f$ under the Poincaré pairing belongs to the dual of $H^{k+1,0}_{\dR}(W_k,\C)$, which is $H^{0,k+1}_{\dR}(W_k,\C)$, and we denote such form with $\eta_{f^*}$. We can actually make this explicit:
\[
    \eta_{f^*} = \frac{1}{(\omega_f,\overline{\omega_f})} \overline{\omega_f} \in H^{0,k+1}_{\dR}(W_k,\C)(\epsilon_k) \simeq \overline{S_{k+2}(N)}.
\]
The form $\eta_{f^*}$ lies in the $f^*$-isotypical component, as the Hecke operators act on it with eigenvalues which are conjugates of those of $f$. We also have the relation $(\omega_f,\overline{\omega_f}) = (-4\pi)^{k+1} \langle f,f \rangle$.

Suppose now that $f$ is a \emph{newform}. By Strong Multiplicity One $\omega_f$ spans the $f$-isotypical component inside the $(k+1,0)$ Hodge component, which is $1$-dimensional. Analogously, $\eta_f$ spans the $f$-isotypical component inside the $(0,k+1)$ Hodge component.
\begin{lemma}
    If $f,g \in S_{k+2}(N)^{new}$ are newforms, then the $(f,g)$-isotypical component $M_{\dR}(f\otimes g) \otimes \C \subseteq H^{2k+2}_{\dR}(\KS_k, \C)(\epsilon_k)$ is $4$-dimensional with basis
    \[
        \{\omega_f\otimes \omega_g, \omega_f \otimes \eta_g, \eta_f\otimes \omega_g, \eta_f\otimes \eta_g\}.
    \]
\end{lemma}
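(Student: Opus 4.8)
The plan is to split the statement into a single-factor computation together with the Künneth formula. First I would record that, by the analysis of Subsection~\ref{sec:de-rham-kuga-sato}, the $\epsilon_k$-eigenspace $H^i_{\dR}(W_k,\C)(\epsilon_k)$ vanishes for $i>k+1$, so that in total degree $2k+2$ the Künneth formula collapses to an isomorphism
\[
    H^{2k+2}_{\dR}(\KS_k,\C)(\epsilon_k,\epsilon_k)\;\xrightarrow{\simeq}\;H^{k+1}_{\dR}(W_k,\C)(\epsilon_k)\otimes_{\C}H^{k+1}_{\dR}(W_k,\C)(\epsilon_k).
\]
This isomorphism intertwines the two commuting families of Hecke correspondences coming from the two copies of $W_k$ (Remark~\ref{remark:hecke-ops}) with the componentwise Hecke action on the tensor product. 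Hence the $(f,g)$-isotypical subspace --- on which the first-copy operators $T_l$ act by $a_l(f)$ and the second-copy operators $T_l$ act by $a_l(g)$ --- corresponds to the tensor product of the $f$-isotypical part of the first factor with the $g$-isotypical part of the second. By construction of the convolution motive this tensor product is, after base change to $\C$, exactly $M_{\dR}(f)\otimes M_{\dR}(g)=M_{\dR}(f\otimes g)$, of dimension $2\cdot 2=4$ since each $M_{\dR}$ is $2$-dimensional (Subsection~\ref{subsec:motives}).

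It then remains to produce a basis of one isotypical factor. Using the Hodge decomposition $H^{k+1}_{\dR}(W_k,\C)(\epsilon_k)\simeq S_{k+2}(N)\oplus\overline{S_{k+2}(N)}$ of~\eqref{eq:cohomology-decompositions}, I would observe that $\omega_f$ and $\eta_f$ both lie in the $f$-isotypical subspace $M_{\dR}(f)\otimes\C$: the former because the Eichler-Shimura isomorphism is Hecke-equivariant and $f$ is a $T_l$-eigenform with eigenvalue $a_l(f)$, the latter by the very construction of $\eta_f$ as the generator of the $f$-isotypical line inside the $(0,k+1)$-component. Since $\omega_f$ and $\eta_f$ lie in distinct Hodge graded pieces they are linearly independent, and since $M_{\dR}(f)\otimes\C$ is exactly $2$-dimensional --- this is where Strong Multiplicity One for the newform $f$ enters --- it follows that $\{\omega_f,\eta_f\}$ is a basis of it. Applying the same reasoning to the second factor with $g$ in place of $f$ and tensoring the two bases yields the asserted basis $\{\omega_f\otimes\omega_g,\ \omega_f\otimes\eta_g,\ \eta_f\otimes\omega_g,\ \eta_f\otimes\eta_g\}$ of $M_{\dR}(f\otimes g)\otimes\C$.

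I expect the only genuinely delicate point to be bookkeeping rather than mathematics: one must keep straight that the two isotypical conditions refer to the two different copies of $W_k$, so that the Künneth decomposition really does split the $(f,g)$-isotypical space as the announced external tensor product, and that the Künneth isomorphism is compatible with the relevant algebraic correspondences. Everything else is an immediate consequence of facts recorded above --- the vanishing $H^i_{\dR}(W_k,\C)(\epsilon_k)=0$ for $i>k+1$, the dimension count $\dim M_{\dR}(f)=2$, and the Hecke-equivariance of Eichler-Shimura.
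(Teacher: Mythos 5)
Your proposal is correct and follows essentially the same route as the paper: the paper's one-line proof ("this follows from the decompositions~\eqref{eq:kunneth-components} simply by taking the relevant eigenspace") is exactly the Künneth-collapse plus Hodge-decomposition plus isotypical-projection argument you spell out, and the basis identification via Strong Multiplicity One is the content of the paragraph immediately preceding the lemma. You merely make the Hecke-equivariance of the Künneth isomorphism and the role of the vanishing $H^i_{\dR}(W_k,\C)(\epsilon_k)=0$ for $i>k+1$ explicit, which is a reasonable amount of unpacking for what the paper leaves implicit.
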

\begin{proof}
    This follows from the decompositions~\eqref{eq:kunneth-components} simply by taking the relevant eigenspace.
\end{proof}

Let $\Omega_{f,g} = \omega_f \otimes \eta_g - \eta_f\otimes \omega_g \in M_{\dR}(f\otimes g)$. It is clear that $\Omega_{f,g} \neq 0$ as it comes from the composition of two linearly independent differential forms, hence it determines a $1$-dimensional subspace filtration:
\[
    0 \subseteq \langle \Omega_{f,g} \rangle \subseteq M_{\dR}(f\otimes g) \otimes \C.
\]

When $f=g$, $M_{\dR}(f\otimes f)$ already enjoys a filtration given by the direct sum decomposition:
\[
    M_{\dR}(f\otimes f) \simeq \wedge^2 M_{\dR}(f) \oplus \Sym^2 M_{\dR}(f)
\]
induced by the action of the involution swapping the two components in the tensor product. As $M_{\dR}(f)$ is $2$-dimensional, $\wedge^2 M_{\dR}(f\otimes f)$ is $1$-dimensional, and since $\Omega_{f,f}$ is non-zero and belongs to the anti-symmetric component by construction, the following result follows.
\begin{proposition}
    $\Omega_{f,f} \in \wedge^2 M_{\dR}(f)$ and it generates the anti-symmetric component.
\end{proposition}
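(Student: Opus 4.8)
The plan is to show that $\Omega_{f,f}$ lies in the $(-1)$-eigenspace of the involution $s$ swapping the two tensor factors of $M_{\dR}(f)\otimes M_{\dR}(f) = M_{\dR}(f\otimes f)$, and then to conclude by a dimension count. By definition this eigenspace is exactly $\wedge^2 M_{\dR}(f)$, and since $M_{\dR}(f)$ is $2$-dimensional, $\wedge^2 M_{\dR}(f)$ is $1$-dimensional; any non-zero element of a $1$-dimensional space generates it, so the proposition reduces to two checks: that $s(\Omega_{f,f}) = -\Omega_{f,f}$, and that $\Omega_{f,f}\neq 0$.

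For the first check one simply applies $s$ to the definition $\Omega_{f,f} = \omega_f\otimes\eta_f - \eta_f\otimes\omega_f$, obtaining
\begin{equation*}
    s(\Omega_{f,f}) = \eta_f\otimes\omega_f - \omega_f\otimes\eta_f = -\Omega_{f,f},
\end{equation*}
so $\Omega_{f,f}$ is anti-symmetric and hence lies in $\wedge^2 M_{\dR}(f)$. For the second check, $\Omega_{f,f}\neq 0$ because $\omega_f$ and $\eta_f$ are linearly independent: they lie in the distinct Hodge components $H^{k+1,0}_{\dR}(W_k,\C)(\epsilon_k)$ and $H^{0,k+1}_{\dR}(W_k,\C)(\epsilon_k)$ respectively, whence $\omega_f\otimes\eta_f$ and $\eta_f\otimes\omega_f$ are themselves linearly independent in $M_{\dR}(f\otimes f)\otimes\C$. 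This non-vanishing was already recorded above, in the $f=g$ case of the observation that $\Omega_{f,g}\neq 0$.

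There is no genuine obstacle here: the statement is an immediate consequence of the definitions together with the fact, recalled just before, that $\wedge^2 M_{\dR}(f)$ is $1$-dimensional. The only point deserving a word of care is that one must use that $\Omega_{f,f}$ is genuinely anti-symmetric — not merely that it has a non-zero anti-symmetric component — so that it does lie \emph{inside} $\wedge^2 M_{\dR}(f)$ rather than just projecting non-trivially onto it; this is why the explicit computation of $s(\Omega_{f,f})$ is worth spelling out.
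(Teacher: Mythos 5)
Your proof is correct and takes essentially the same route as the paper, which simply observes that $\Omega_{f,f}$ is non-zero and anti-symmetric by construction and that $\wedge^2 M_{\dR}(f)$ is $1$-dimensional. You have merely made explicit the computation $s(\Omega_{f,f}) = -\Omega_{f,f}$ and the non-vanishing argument that the paper leaves implicit.
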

This amounts to say that the above filtrations coincide. We note that $\Omega_{f,f}$ always lies in the $\wedge^2$-component, in accordance with the fact that we want to exploit classes there, see Subsubsection~\ref{subsubsec:swapping}.

\section{Regulator formulae}
In this section we prove complex and $p$-adic regulator formulae linking compactified Beilinson--Flach classes to special values of $L$-functions.

\subsection{Archimedean argument}
\label{subsec:complex-argument}
\subsubsection{A complex regulator formula}
\label{subsubsec:complex-formula}
We explain here the complex regulator formula, following~\cite{brunaultchida16,KLZ20}.

Recall diagram in figure~\ref{diag:complex-diagram} at page~\pageref{diag:complex-diagram} and specialise it to $X = \KS_k$, which has dimension $2k+2$ over $\Q$. As we will compute the pairing using the bottom row, we first analyse carefully the cohomology group appearing there. Even though $\KS_k$ is defined over $\Q$, we regard it as being defined over $K_f$, as in~\ref{subsubsec:regulator-motivic-cohomology} we will need to extend coefficients to $K_f$. By the isomorphisms~\eqref{eq:ext-groups} we obtain
\begin{align*}
    \Ext^0_{\MH_{K_f}}(K_f,K) &\simeq W_0 K \cap F^0(K\otimes \C), \\
    \Ext^1_{\MH_{K_f}}(K_f,H) &\simeq \frac{(W_0 H \otimes \C)}{W_0 H + F^0(H\otimes \C)}.
\end{align*}
with $H = H_B^{2k+2}(\KS_k,K_f(2+2k-j))$, $K = H_B^{2k+2}(\KS_k,K_f(k+1))$.

Moreover, since $H^{2k+2}_B(\KS_k,K_f(k+1))$ is concentrated in weight $0$, the first isomorphism becomes
\begin{equation}
    \begin{split}
        \Ext^0_{\MH_{K_f}}&(K_f,H_B^{2k+2}(\KS_k,K_f(k+1))) \\
        &\simeq H^{2k+2}_B(\KS_k,K_f(k+1)) \cap \Fil^0 H^{2k+2}_{\dR}(\KS_k,\C(k+1)) \\
        &= H^{2k+2}_B(\KS_k,K_f(k+1)) \cap \Fil^{k+1} H^{2k+2}_{\dR}(\KS_k,\C).
    \end{split}
    \label{eq:isomorph-ext-0}
\end{equation}

Regarding the $\Ext^1$, we attach the functorial morphism $\pr_{f,f}$ induced by the projection $H^{2k+2}_B(\KS_k,K_f(2k+2-j)) \twoheadrightarrow M_B(f\otimes f)_{K_f}^*(-j)$, as explained in Subsection~\ref{subsec:motives}. In this way we have obtained the following diagram, where the $\Ext$-groups are all in the category $\MH_{K_f}$:
\begin{equation}
    \label{diag:complex-ext}
    \begin{tikzcd}[column sep=small]
        H_{\mot}^{3+2k}(\KS_k,2+2k-j) \arrow["\rD"]{d} & H_{\mot}^{2k+2}(\KS_k,k+1) \arrow["\rD"]{d} \\
        H_{\del}^{3+2k}(\KS_k,K_f(2+2k-j)) \arrow["\mathrm{edge}"]{d} & H_{\del}^{2k+2}(\KS_k,K_f(k+1)) \arrow["\text{edge}"]{d} \\
        \Ext^1(K_f,H^{2k+2}_B(\KS_k,K_f(2+2k-j))) \arrow["\pr_{f,f}"]{d} & \Ext^0(K_f,H_B^{2k+2}(\KS_k,K_f(k+1))) \arrow[shift left=1ex,"\pi_{f,f}"]{d} \\
         \Ext^1(K_f,M_B(f\otimes f)_{K_f}^*(-j)) & \Ext^0(K_f,M_B(f\otimes f)_{K_f}(k+1)) \arrow[shift left=1ex,"\subseteq",hook]{u}
    \end{tikzcd}
\end{equation}
Notice that in the right column we are projecting onto a subobject: this is possible since $H^{2k+2}_B(\KS_k,K_f(k+1))$ decomposes as a direct sum of isotypical components corresponding to cusp forms (see Subsection~\ref{sec:de-rham-kuga-sato}). We are thus using the projection over a direct summand. The $\Ext^0$-group with coefficients in (some twist of) $M_B(f\otimes f)_{K_f}$ is the correct group to land in, as we want to pair elements in the bottom row.

With the same argument as before, we can compute
\[
    \begin{split}
        \Ext^0_{\MH_{K_f}}&(K_f,M_B(f\otimes f)_{K_f}(k+1)) \\
        &\simeq M_B(f\otimes f)_{K_f}(k+1) \cap \Fil^0 M_{\dR}(f\otimes f)_{\C}(k+1) \\
        &= M_B(f\otimes f)_{K_f}(k+1) \cap \Fil^{k+1} M_{\dR}(f\otimes f)_{\C}.
    \end{split}
\]
\begin{lemma}
    We have $\Omega_{f,f} \in \Ext^0_{\MH_{K_f}}(K_f,M_B(f\otimes f)_{K_f}(k+1))$.
\end{lemma}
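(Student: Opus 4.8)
The plan is to use the description of $\Ext^0_{\MH_{K_f}}(K_f, M_B(f\otimes f)_{K_f}(k+1))$ obtained just above, namely as the intersection $M_B(f\otimes f)_{K_f}(k+1) \cap \Fil^{k+1} M_{\dR}(f\otimes f)_\C$ formed inside $M_{\dR}(f\otimes f)_\C$. Thus two things must be checked: that $\Omega_{f,f}$ lies in $\Fil^{k+1}$, and that it lies in the $K_f$-rational Betti lattice (after the Tate twist). The first is a direct Hodge-theoretic computation; the second is the substantive point, and for it I would exploit that, by the preceding Proposition, $\Omega_{f,f}$ spans the antisymmetric summand $\wedge^2 M_{\dR}(f)$ of $M_{\dR}(f\otimes f)$.

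For the filtration: the Hodge filtration on $M_{\dR}(f)$ jumps only at $0$ and $k+1$, with $\Fil^0 M_{\dR}(f) = M_{\dR}(f)$ and $\Fil^{k+1} M_{\dR}(f) = \langle\omega_f\rangle$; in particular $\omega_f \in \Fil^{k+1}$ and $\eta_f \in \Fil^0$. Since $\Fil^n$ of a tensor product is $\sum_{p+q=n}\Fil^p\otimes\Fil^q$, both $\omega_f\otimes\eta_f$ and $\eta_f\otimes\omega_f$ lie in $\Fil^{(k+1)+0} M_{\dR}(f\otimes f)_\C = \Fil^{k+1} M_{\dR}(f\otimes f)_\C$, hence so does $\Omega_{f,f}$. (Equivalently, $\Fil^0 M_{\dR}(f\otimes f)_\C(k+1)$ contains the $(k+1,-(k+1))$-line and the two $(0,0)$-lines, and $\Omega_{f,f}$ lies in a $(0,0)$-line.)

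For the Betti-rationality: since the swap involution $s$ is algebraic, it respects the comparison isomorphisms, so the decomposition $\Sym^2\oplus\wedge^2$ is defined over $K_f$ compatibly in all realisations, and it suffices to show $\Omega_{f,f} \in \wedge^2 M_B(f)_{K_f}(k+1)$. This last space is a $K_f$-Hodge structure pure of weight $0$ and type $(0,0)$ — indeed $M_B(f)$ has Hodge types $(k+1,0)$ and $(0,k+1)$, so $\wedge^2 M_B(f)$ has type $(k+1,k+1)$, which the twist by $(k+1)$ brings to $(0,0)$ — hence it is (abstractly) a copy of the unit object, so that $\Ext^0_{\MH_{K_f}}(K_f, \wedge^2 M_B(f)_{K_f}(k+1))$ is the whole one-dimensional space $\wedge^2 M_B(f)_{K_f}(k+1)$ itself. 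Everything therefore reduces to checking that $\Omega_{f,f}$ lies in this $K_f$-Betti lattice, which I would do by unwinding the normalisations: $\omega_f$ is $K_f$-rational on the de Rham side and $\eta_f$ is normalised via the Petersson product (exactly as $\eta_{f^*}$ was made explicit above) so that the antisymmetric combination $\omega_f\otimes\eta_f - \eta_f\otimes\omega_f$, after the twist, meets the $K_f$-rational Betti lattice in the $\wedge^2$-direction; this is where Shimura-style algebraicity for the periods of a newform, equivalently the de Rham–Betti period of the rank-one motive $\det M(f)$, enters. This last rationality step is the main obstacle; the filtration statement and the reduction through the type-$(0,0)$ Hodge structure $\wedge^2 M_B(f)_{K_f}(k+1)$ are formal.
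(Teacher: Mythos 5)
Your Hodge-theoretic reduction is formally correct: $\Omega_{f,f}$ does land in $\Fil^{k+1}$ by the Leibniz rule for tensor filtrations, and restricting to the $\wedge^2$-summand — which, after the Tate twist, is a pure weight-$0$, type-$(0,0)$ $K_f$-Hodge structure, so that $\Ext^0$ is the whole line — is a clean way to isolate the issue. But you explicitly flag the remaining step, that $\Omega_{f,f}$ actually lies in the $K_f$-rational Betti lattice of that line, as ``the main obstacle'', and you leave it as a pointer to Shimura-style algebraicity for $\det M(f)$ rather than an argument. That is precisely the content of the lemma. The factor $\eta_f$ is normalised through the Petersson norm and is not Betti-rational, and your claim that $\omega_f$ is $K_f$-rational on the de Rham side is not quite right either: it is $\tau(\psi^{-1})\omega_f$ that is $K_f$-rational (see the normalisations $\omega_f'$, $\eta_f'$ later in the paper), so the de Rham rationality of $\Omega_{f,f}$ already drags in a Gauss sum. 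Establishing that the transcendental periods cancel in the antisymmetric combination would require an explicit comparison of the $K_f$-rational Betti and de Rham structures on $\det M(f)(k+1)$, and nothing in the sketch does this.

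The paper takes a genuinely different route that sidesteps the period computation entirely. It appeals forward to Proposition~\ref{prop:image-cycle-dr} (whose proof is independent of this lemma, so there is no circularity), where an explicit algebraic cycle $Z_f \in \CH^{k+1}(\KS_{k,\Q(\mu_N)})\otimes K_f$ is constructed with $\cl_{\dR}(Z_f) = \lambda_N(f^*)\,\Omega_{f,f}$. Since the image of the de Rham cycle class map on $\CH^{k+1}$ is contained in $H^{2k+2}_B(\KS_{k,\Q(\mu_N)},\Z)\cap H^{k+1,k+1}_{\dR}$ by construction, and $\lambda_N(f^*)\in K_f(\zeta_N)^{\times}$, the Betti rationality of $\Omega_{f,f}$ follows at once from Hecke-isotypicality. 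The geometry of $Z_f$ does the work that your period heuristic only gestures at; this is exactly what makes the cycle-theoretic proof more robust and is why the argument is deferred to that proposition. To complete your version you would need to actually compute the period of the rank-one motive $\det M(f)(k+1)$ and verify the normalisations cancel, which is a real piece of work you have not carried out.
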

\begin{proof}
    Clearly $\Omega_{f,f}$ belongs to $\Fil^{k+1} M_{\dR}(f\otimes f)_{\C}$, so it suffices to show it also belongs to $M_B(f\otimes f)_{K_f}(k+1)$. Notice that by definition $M_B(f\otimes f)_{K_f}(k+1) = M_B(f\otimes f)_{K_f} \otimes K_f(k+1)$ so it suffices to show that $\Omega_{f,f}$ belongs to the untwisted group.

    We will show in Proposition~\ref{prop:image-cycle-dr} that $\Omega_{f,f}$ lies in the target of the cycle class map: there exists a class in $\CH^{k+1}(\KS_{k,\Q(\mu_N)})\otimes K_f$ such that its image under $\cl_{\dR}$ coincides with $\Omega_{f,f}$. The proof of that proposition only uses the fact that $\Omega_{f,f}$ is a de Rham class in the middle Hodge component, so we can use the result here. The image of $\CH^{k+1}(\KS_{k,\Q(\mu_N)})$ under the cycle class map is
    \begin{gather*}
        H_B^{2k+2}(\KS_{k,\Q(\mu_N)},\Z) \cap H^{k+1,k+1}_{\dR}(\KS_{k,\Q(\mu_N)},\C).
        \intertext{In particular, this shows that $\Omega_{f,f}$ belongs to}
        H_B^{2k+2}(\KS_{k,\Q(\mu_N)},\Z)\otimes K_f \simeq H^{2k+2}_B(\KS_k,K_f).
    \end{gather*}
    Since $\Omega_{f,f}$ is already in the $(f,f)$-isotypical component, this shows that it belongs to $M_B(f\otimes f)_{K_f}$.
\end{proof}
\begin{remark}
    By construction $\Omega_{f,f} \in \wedge^2 M_{\dR}(f)$, therefore the above lemma proves that it is in the subspace $\Ext^0_{\MH_{K_f}}(K_f,\wedge^2 M_B(f\otimes f)_{K_f}(k+1))$.
\end{remark}
To summarise, we have shown that $\Omega_{f,f}$ is in the second group in the bottom row of the diagram and, by definition, $\cBF_{\del}^{f,f,j}$ is in the first group in the same row. There is a natural pairing between those two groups, induced by the natural one between $M_B(f\otimes f)_{K_f}$ and its dual, with target $\Ext^1_{\MH_{K_f}}(K_f,K_f(1+k-j)) \simeq K_f(1+k-j)$. By commutativity of the corresponding diagram, pairing $\cBF_{\del}^{f,f,j}$ with $\Omega_{f,f}$ computes the same value as in Deligne cohomology (which is also the same obtainable by pairing elements in the $\Ext$-groups of Betti cohomology).

The resulting computation expresses the relation between the bottom layer of the Beilinson--Flach Euler system and a special value of the derivative of the Rankin--Selberg $L$-function and is foundational for our paper. By the remark at page~\pageref{rmk:pairing-unchanged}, pairing (the Deligne realisations of) $\lcBF^{k,k,j}$ or $\BF^{[k,k,j]}_{\mot}$ with differential forms arising from cusp forms yields the same result. Since $\Omega_{f,f}$ comes from cusp forms, we can compute the pairing as if we were using the original classes. More precisely, we have the following proposition.
\begin{proposition}[{\cite[Proposition~8.3]{brunaultchida16}}]
    \label{prop:brunault-chida-pairing}
    We have
    \[
        \langle \rD(\BF^{[k,k,j]}_{\mot}),\Omega_{f,f} \rangle = \langle \rD(\lcBF^{k,k,j}),\Omega_{f,f} \rangle.
    \]
\end{proposition}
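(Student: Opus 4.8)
Both pairings in the statement are obtained by cup product followed by push-forward to a point; the only difference is that the left-hand side is computed on the smooth proper Kuga--Sato variety $\KS_k = W_k\times W_k$, while the right-hand side involves the open variety $\Ell^k\times\Ell^k$ and, implicitly, the canonical lift of $\Omega_{f,f}$ to compactly supported cohomology (cf.\ Subsection~\ref{subsec:motives}). Write $\iota\colon \Ell^k\times\Ell^k\hookrightarrow\KS_k$ for the open immersion onto the preimage of $Y_1(N)^2$, $\pi\colon\Ell^k\times\Ell^k\to\Spec\Q$ for its structure map, and $\widetilde{\Omega}$ for the canonical compactly supported de Rham (hence Deligne) lift of $\Omega_{f,f}$ over $\Ell^k\times\Ell^k$, so that the pairing on the right of the statement is $\pi_!\bigl(\rD(\BF^{[k,k,j]}_{\mot})\cup\widetilde{\Omega}\bigr)$. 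The plan is to deduce the identity from a single geometric input: as a class on the proper variety $\KS_k$, one has $\Omega_{f,f}=\iota_!\widetilde{\Omega}$, i.e.\ the de Rham class attached to a cusp form extends by zero across the cuspidal locus $\KS_k^{\infty}$ (equivalently, the $(f,f)$-isotypic part of the $(\epsilon_k,\epsilon_k)$-eigenspace of $H^{2k+2}_{\dR}(\KS_k,\C)$ is interior).

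Granting this, the argument is formal: $\rD$ commutes with $\iota^*$, with cup products and with push-forwards, and $\iota^*\lcBF^{k,k,j}=\BF^{[k,k,j]}_{\mot}$ by the construction recalled in Subsection~\ref{subsec:compactification-bf}. Hence, using the projection formula $\rD(\lcBF^{k,k,j})\cup\iota_!\widetilde{\Omega}=\iota_!\bigl(\iota^*\rD(\lcBF^{k,k,j})\cup\widetilde{\Omega}\bigr)$ and the identity $\pi_{\KS_k,*}\circ\iota_!=\pi_!$,
\begin{multline*}
    \langle\rD(\lcBF^{k,k,j}),\Omega_{f,f}\rangle
    = \pi_{\KS_k,*}\bigl(\rD(\lcBF^{k,k,j})\cup\iota_!\widetilde{\Omega}\bigr) \\
    = \pi_!\bigl(\iota^*\rD(\lcBF^{k,k,j})\cup\widetilde{\Omega}\bigr)
    = \pi_!\bigl(\rD(\BF^{[k,k,j]}_{\mot})\cup\widetilde{\Omega}\bigr)
    = \langle\rD(\BF^{[k,k,j]}_{\mot}),\Omega_{f,f}\rangle.
\end{multline*}

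A more hands-on variant avoids compactly supported cohomology. By the construction in Subsection~\ref{subsec:compactification-bf}, on $W_k\times W_k$ the class $\lcBF^{k,k,j}$ differs from a lift of $\BF^{[k,k,j]}_{\mot}$ only by the cuspidal correction $i_{\mathrm{cusp},*}(\xi_{\beta})$ (nonzero only when $j=0$) together with an element in the kernel of restriction to $\Ell^k\times\Ell^k$, which by the localisation sequence $Z^{k,k}\to\hat{\Ell}^{k,k,*}\to\Ell^k\times\Ell^k$ lies in the image of push-forward from the cuspidal stratum $Z^{k,k}$; thus, after applying $\rD$, the whole defect between the two sides is the push-forward of a class supported on a cuspidal stratum, and by the projection formula its pairing with $\Omega_{f,f}$ equals (up to sign) the pairing of the underlying class with the restriction of $\Omega_{f,f}$ to that stratum, which vanishes because $\omega_f$ and $\eta_f$ restrict to $0$ on the fibres over the cusps — exactly the vanishing isolated in Remark~\ref{rmk:pairing-unchanged}. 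In either approach, the one genuinely substantive point, and the main obstacle, is precisely this interiority/vanishing statement: it is where the cuspidality of $f$ enters, via the description of $\Fil^{\bullet}H^{k+1}_{\dR}(W_k,\C)(\epsilon_k)$ recorded in Subsection~\ref{sec:de-rham-kuga-sato}, Hecke-equivariance, and Strong Multiplicity One; a complete treatment is given in~\cite[§8]{brunault.chida:regulators}. Everything else reduces to the compatibility of $\rD$ with pull-back, push-forward and cup product.
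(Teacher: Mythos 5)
The paper does not reprove this proposition; it is imported verbatim from Brunault--Chida, and the only editorial content the paper adds is the remark immediately following it, which observes that the equality is meaningful because $\Omega_{f,f}$ has a canonical lift to compactly supported cohomology (cf.\ Subsection~\ref{subsec:motives}). Your first argument is precisely a careful formalisation of that remark: you make the compactly supported interpretation of the left-hand pairing explicit, and deduce the equality from $\Omega_{f,f}=\iota_!\widetilde\Omega$ together with the projection formula and $\pi_{\KS_k,*}\circ\iota_!=\pi_!$. That mechanics is correct and standard; the one genuine input is the interiority claim, and you correctly identify it as such.

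Your second, ``hands-on'' variant is also sound and is closer to how Remark~\ref{rmk:pairing-unchanged} in the text phrases the matter: the defect $\lcBF^{k,k,j}-\BF^{[k,k,j]}_{\mot}$ consists of (i) the explicit cuspidal correction $i_{\mathrm{cusp},*}(\xi_\beta)$, pushed forward from $Z^k\times\Ell^k$, and (ii) the ambiguity in choosing a preimage of $(\sharp)$, which by the localisation sequence lives in the image of push-forward from $Z^{k,k}=Z^k\times Z^k$; both lie over cuspidal strata in the first factor, so the projection formula reduces the pairing of $\rD(\text{defect})$ with $\Omega_{f,f}$ to a pairing with $\Omega_{f,f}|_{Z^k\times\Ell^k}$ (or its further restriction to $Z^k\times Z^k$). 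One small sharpening of your phrasing: what one needs is the vanishing of the cohomological restrictions $\omega_f|_{Z^k}$ and $\eta_f|_{Z^k}$ in $H^{k+1}_{\dR}(Z^k,\C)(\epsilon_k)$, not the pointwise vanishing of a holomorphic form at the cusps. That cohomological vanishing is what follows from the identification $H^{k+1}_{\dR}(W_k,\C)(\epsilon_k)\simeq S_{k+2}(N)\oplus\overline{S_{k+2}(N)}$ of Subsection~\ref{sec:de-rham-kuga-sato}, i.e.\ from cuspidality plus Hecke-equivariance; it is exactly the interiority of the $(f,f)$-eigenspace, so your two routes rest on the same lemma. In short: correct, well-organised, and consistent with the paper, which simply delegates the proof to Brunault--Chida.
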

\begin{remark}
    The proposition makes sense because $\Omega_{f,f}$ is canonically associated to a cohomology class with compact support as noted in Subsection~\ref{subsec:motives}. Notice that in the above equality, on the left hand side we have less information on the Beilinson--Flach classes, so we have to make up for this with additional information on $\Omega_{f,f}$---namely, that it corresponds to a compactly supported class.
\end{remark}
\begin{proposition}
    We have
    \[
        \langle \rD(\cBF^{k,k,j}),\Omega_{f,f} \rangle = \frac{1+(-1)^j}{2}\langle \rD(\lcBF^{k,k,j}),\Omega_{f,f} \rangle.
    \]
\end{proposition}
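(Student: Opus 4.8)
The plan is to reduce everything to the definition of $\cBF^{k,k,j}$ as a $\rho'$-symmetrisation of $\lcBF^{k,k,j}$, together with the compatibility of $\rD$ with pull-backs and two elementary observations: the invariance of the pairing $\langle\cdot,\cdot\rangle$ under $(\rho')^*$, and the computation of $(\rho')^*\Omega_{f,f}$.

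First, since $\rD$ is compatible with pull-backs, applying it to the defining formula of $\cBF^{k,k,j}$ gives $\rD(\cBF^{k,k,j}) = \tfrac{1}{2}\bigl(\rD(\lcBF^{k,k,j}) + (-1)^{k+j}(\rho')^*\rD(\lcBF^{k,k,j})\bigr)$, so by bilinearity it suffices to express $\langle(\rho')^*\rD(\lcBF^{k,k,j}),\Omega_{f,f}\rangle$ in terms of $\langle\rD(\lcBF^{k,k,j}),\Omega_{f,f}\rangle$. Next I would note that $\rho'$ is an involution of $\KS_k$ defined over $\Q$ (hence over $K_f$) that commutes with the structural morphism $\pi\colon\KS_k\to\Spec\Q$; since $\pi\circ\rho'=\pi$ and $(\rho')^2=\mathrm{id}$, the push-forward satisfies $\pi_*\circ(\rho')^*=\pi_*$, and combined with the functoriality of the cup product this gives $\langle(\rho')^*x,(\rho')^*y\rangle=\langle x,y\rangle$ and therefore $\langle(\rho')^*x,y\rangle=\langle x,(\rho')^*y\rangle$. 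Finally I would compute the action of $(\rho')^*$ on $\Omega_{f,f}=\omega_f\otimes\eta_f-\eta_f\otimes\omega_f$: both $\omega_f$ and $\eta_f$ lie in degree $k+1$, so the Künneth swap picks up the Koszul sign $(-1)^{(k+1)^2}=(-1)^{k+1}$, whence $(\rho')^*\Omega_{f,f}=(-1)^{k+1}(\eta_f\otimes\omega_f-\omega_f\otimes\eta_f)=(-1)^k\Omega_{f,f}$. This is consistent with $\Omega_{f,f}\in\wedge^2 M_{\dR}(f)$ and the relation $(\rho')^*=(-1)^{k+1}s$ on the $(f,f)$-isotypic middle cohomology recorded in Subsubsection~\ref{subsubsec:swapping}.

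Putting the three steps together, $\langle(\rho')^*\rD(\lcBF^{k,k,j}),\Omega_{f,f}\rangle = \langle\rD(\lcBF^{k,k,j}),(\rho')^*\Omega_{f,f}\rangle = (-1)^k\langle\rD(\lcBF^{k,k,j}),\Omega_{f,f}\rangle$, so
\[
\langle\rD(\cBF^{k,k,j}),\Omega_{f,f}\rangle = \frac{1+(-1)^{k+j}(-1)^k}{2}\langle\rD(\lcBF^{k,k,j}),\Omega_{f,f}\rangle = \frac{1+(-1)^j}{2}\langle\rD(\lcBF^{k,k,j}),\Omega_{f,f}\rangle,
\]
using $(-1)^{2k+j}=(-1)^j$. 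There is no real obstacle here; the only point requiring care is the sign in $(\rho')^*\Omega_{f,f}$, and it is reassuring that the same sign appears independently in the discussion of the swapping involutions in Subsubsection~\ref{subsubsec:swapping}.
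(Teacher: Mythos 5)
Your proof is correct and matches the paper's argument step for step: both expand $\rD(\cBF^{k,k,j})$ via linearity and compatibility with pull-backs, move $(\rho')^*$ across the pairing (you justify the adjointness explicitly via $\pi\circ\rho'=\pi$, the paper does so implicitly), and both conclude with the sign computation $(\rho')^*\Omega_{f,f}=(-1)^{k+1}s(\Omega_{f,f})=(-1)^k\Omega_{f,f}$. The only cosmetic difference is that you derive the Koszul sign from scratch while the paper cites the relation $(\rho')^*=(-1)^{k+1}s$ already recorded in the discussion of swapping involutions.
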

\begin{proof}
    By linearity and compatibility of $\rD$ with pull-backs:
    \begin{align*}
        \langle \rD(\cBF^{k,k,j}),\Omega_{f,f} \rangle &= \frac{1}{2}\langle \rD(\lcBF^{k,k,j}),\Omega_{f,f} \rangle + \frac{(-1)^{k+j}}{2}\langle \rD((\rho')^*\lcBF^{k,k,j}),\Omega_{f,f} \rangle \\
        &= \frac{1}{2}\langle \rD(\lcBF^{k,k,j}),\Omega_{f,f} \rangle + \frac{(-1)^{k+j}}{2}\langle \rD(\lcBF^{k,k,j}),(\rho')^*\Omega_{f,f} \rangle.
    \end{align*}
    By definition $\Omega_{f,f}$ is an antisymmetric tensor with components of degree $k+1$, hence $(\rho')^*\Omega_{f,f} = (-1)^{k+1}s(\Omega_{f,f}) = (-1)^k\Omega_{f,f}$. This proves the claim.
\end{proof}
We are now in a position to prove the following version of the regulator formula over $X_1(N)^2$.
\begin{theorem}
    \label{thm:regulator-formula}
    If $j$ is even and $0\leq j\leq k$, then the following formula holds.
    \[
        \begin{split}
            \langle \rD(\lcBF^{k,k,j}), \Omega_{f,f} \rangle &= \langle \rD(\cBF^{k,k,j}), \Omega_{f,f} \rangle = \langle \cBF^{f,f,j}_{\del}, \Omega_{f,f} \rangle \\
            &= (2\pi i)^{2(k-j)}\frac{(-1)^{k-j}}{2(\omega_f,\overline{\omega_f})}\Bigl(\frac{k!}{(k-j)!}\Bigr)^2 L'(f,f,j+1).
        \end{split}
    \]
\end{theorem}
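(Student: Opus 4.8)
The plan is to assemble the three equalities in the statement from ingredients already available, treating each in turn. For the first equality $\langle \rD(\lcBF^{k,k,j}),\Omega_{f,f}\rangle = \langle \rD(\cBF^{k,k,j}),\Omega_{f,f}\rangle$, I would simply invoke the Proposition immediately above, which gives $\langle \rD(\cBF^{k,k,j}),\Omega_{f,f}\rangle = \tfrac{1+(-1)^j}{2}\langle \rD(\lcBF^{k,k,j}),\Omega_{f,f}\rangle$, together with the hypothesis that $j$ is even, so that $\tfrac{1+(-1)^j}{2}=1$.

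\emph{Second equality.} Here I would unwind the definition of $\cBF^{f,f,j}_{\del}$ as the image of $\rD(\cBF^{k,k,j}) = \cBF^{k,k,j}_{\del}$ under the composite of the edge map into $\Ext^1_{\MH_{K_f}}(K_f,H^{2k+2}_B(\KS_k,K_f(2+2k-j)))$ followed by the functorial projection $\pr_{f,f}$, exactly as in diagram~\eqref{diag:complex-ext}. By the Lemma preceding the statement, $\Omega_{f,f}$ lies in $\Ext^0_{\MH_{K_f}}(K_f,M_B(f\otimes f)_{K_f}(k+1))$, hence, via the inclusion labelled $\subseteq$ in that diagram, also in $\Ext^0_{\MH_{K_f}}(K_f,H^{2k+2}_B(\KS_k,K_f(k+1)))$; note that, $j$ being even, both $\cBF^{f,f,j}$ and $\Omega_{f,f}$ lie in the $\wedge^2$-component, so the pairing takes place in the correct summand. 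Commutativity of figure~\ref{diag:complex-diagram}, specialised to $X = \KS_k$ (so $d = k+1$), together with the compatibility of the edge maps with cup products and push-forwards established just after that figure, shows that the Deligne pairing $\langle \rD(\cBF^{k,k,j}),\Omega_{f,f}\rangle$, computed via the bottom row of~\eqref{diag:complex-ext}, agrees with the pairing of the corresponding Betti $\Ext$-classes. Since the cup-product pairing on $H^{2k+2}_B(\KS_k)$ is orthogonal for the isotypical (Künneth) decomposition, only the $(f,f)$-isotypical component of $\rD(\cBF^{k,k,j})$ pairs nontrivially with the $(f,f)$-isotypical class $\Omega_{f,f}$; extracting that component is precisely what $\pr_{f,f}$ does on the dual side, being, up to the twist, adjoint to the inclusion $\subseteq$ used to lift $\Omega_{f,f}$. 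Inserting $\pr_{f,f}$ therefore leaves the pairing unchanged, yielding $\langle \rD(\cBF^{k,k,j}),\Omega_{f,f}\rangle = \langle \cBF^{f,f,j}_{\del},\Omega_{f,f}\rangle$, where the right-hand pairing is the natural one between $M_B(f\otimes f)^*_{K_f}(-j)$ and $M_B(f\otimes f)_{K_f}(k+1)$ valued in $\Ext^1_{\MH_{K_f}}(K_f,K_f(1+k-j)) \simeq K_f(1+k-j)$.

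\emph{The value.} By Proposition~\ref{prop:brunault-chida-pairing}, $\langle \rD(\lcBF^{k,k,j}),\Omega_{f,f}\rangle = \langle \rD(\BF^{[k,k,j]}_{\mot}),\Omega_{f,f}\rangle$: the defect between the two classes is supported on the cuspidal locus, while $\Omega_{f,f} = \omega_f\otimes\eta_f - \eta_f\otimes\omega_f$ is canonically associated with a compactly supported class, so pairing against it does not detect the defect. As $j$ is even and $0\le j\le k$, the Kings–Loeffler–Zerbes regulator formula recalled in the introduction evaluates this pairing to $(2\pi i)^{2(k-j)}\tfrac{(-1)^{k-j}}{2(\omega_f,\overline{\omega_f})}\bigl(\tfrac{k!}{(k-j)!}\bigr)^2 L'(f,f,j+1)$, which is the claimed value; this finishes the proof.

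\emph{Main obstacle.} The crux is the second equality: showing that the Deligne-cohomology pairing coincides with the pairing obtained after passing to the $\Ext$-groups and applying $\pr_{f,f}$. This means carefully gluing together the commutative diagrams of figures~\ref{diag:complex-compatibility} and~\ref{diag:complex-diagram} and diagram~\eqref{diag:complex-ext}, and using that the cup-product pairing on middle-degree Betti cohomology respects the isotypical decomposition and that $\pr_{f,f}$ is, up to twist, adjoint to the inclusion $M_B(f\otimes f)_{K_f}\hookrightarrow H^{2k+2}_B(\KS_k,K_f)$ through which $\Omega_{f,f}$ was lifted.
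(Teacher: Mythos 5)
Your proof is correct and follows essentially the same route as the paper: first equality from the preceding symmetrisation proposition with $j$ even, second equality from the commutativity of the extended diagram combining figures~\ref{diag:complex-diagram} and~\eqref{diag:complex-ext}, and the explicit value via Proposition~\ref{prop:brunault-chida-pairing} together with the Kings--Loeffler--Zerbes formula. The paper is slightly more explicit about the notational translation into \cite{kings.loeffler:rankin-eisenstein} (identifying their class $\AJ_{\mathcal{H},f,f}(\mathrm{Eis}^{[k,k,j]}_{\mathcal{H},1,N})$ with $\BF^{[f,f,j]}_{\del}$ via the Deligne/absolute-Hodge comparison for non-positive weight), but your unpacking of why inserting $\pr_{f,f}$ leaves the pairing unchanged---orthogonality of the Künneth decomposition and adjointness of $\pr_{f,f}$ to the inclusion through which $\Omega_{f,f}$ is lifted---is a valid fleshing-out of the commutativity claim the paper states more tersely.
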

\begin{proof}
    The first equality is given by the last proposition since $j$ is even. The second equality comes from the fact that if we extend diagram~\ref{diag:complex-diagram} with the morphisms from~\eqref{diag:complex-ext}, the resulting diagram is still commutative, because the involved morphisms commute with cup-products and push-forwards.

    Proposition~\ref{prop:brunault-chida-pairing} shows that the pairings all coincide with $\langle \BF^{[k,k,j]}_{\del},\Omega_{f,f}\rangle$. Applying~\cite[Theorem~6.2.9]{KLZ20} we deduce the explicit value of the pairing. Indeed, the pull-back of $\Omega_{f,f}$ to $Y_1(N)^2$ equals the differential form used there by construction, as we assumed $j$ even. The cohomology class they denote $\AJ_{\mathcal{H},f,f}(\mathrm{Eis}^{[k,k,j]}_{\mathcal{H},1,N})$ corresponds to the class we denoted $\BF^{[f,f,j]}_{\del}$. This follows from the fact that if a Hodge structure has non-positive weight, its Deligne and absolute Hodge cohomologies are isomorphic. Hence, the cited theorem computes the pairing which in our notation is
    \[
        \langle \BF^{[f,f,j]}_{\del}, \Omega_{f,f} \rangle.
    \]
    By the same argument that we used for compactified classes, we deduce that this equals $\langle \BF^{[k,k,j]}_{\del}, \Omega_{f,f} \rangle$. Therefore, all the pairings in the statement of the theorem compute to the expression on the right hand side of~\cite[Theorem~6.2.9]{KLZ20}, which proves the claim.
\end{proof}
\begin{remark}
    More generally, the regulator formula links the Deligne realisation of $\lcBF^{k,k',j}$ with $L'(f,g,j+1)$ through the pairing with $\Omega_{f,g}$. However, we will only need the version with $f=g$, so we restricted to this case for simplicity.
\end{remark}
In~\cite{KLZ20} Deligne cohomology is replaced with absolute Hodge cohomology. In the case under consideration the two cohomologies agree, so the above form of the theorem is equivalent to that with absolute Hodge cohomology.

\subsubsection{Remark on symmetry eigenspaces}
As remarked above, $\Omega_{f,f}$ lives in the subspace
\[ \Ext^0_{\MH_{K_f}}(K_f,\wedge^2 M_B(f)_{K_f}(k+1)),\]
so it pairs non-trivially only with the $\wedge^2$-component of $\lcBF^{f,f,j}_{\del}$, i.e.\ that in the cohomology of $\wedge^2 M_{\dR}(f)^*$. When $j$ is even, $\BF^{[f,f,j]}_{\del}$ is in the antisymmetric component too, but $\lcBF^{f,f,j}_{\del}$ has no such property in general. This is the main reason why we defined the symmetrised classes $\cBF^{k,k,j}$. In any case, after Brunault and Chida's result the complex regulator formula does not detect this.

It is however important to keep in mind that we are ultimately studying classes in the cohomology of $\wedge^2 M_{\theory}(f)$ rather than of the full $M_{\theory}(f\otimes f)$. For this reason we defined $\Omega_{f,f}$ to detect the antisymmetric component. As for Beilinson--Flach classes, we ensured that the non-compact ones belong to the correct eigenspace with the requirement on $j$, and we worked around the cuspidal defect, as explained in~\ref{subsubsec:swapping}.

\subsubsection{The regulator formula in motivic cohomology}
\label{subsubsec:regulator-motivic-cohomology}
We now interpret the complex regulator formula purely in terms of motivic cohomology.

By looking at the diagrams, it is natural to ask if one could find a preimage of $\Omega_{f,f}$ in the motivic cohomology group, so that both pieces of the regulator formula would have a source in motivic cohomology. For this purpose we take advantage of the extra row in de Rham cohomology. The isomorphism~\eqref{eq:isomorph-ext-0} is such that its composition with $\rD$ is the de Rham cycle class map, as explained in~\cite[§7]{esnaultviehweg}. This is coherent with the numerology, since $H^{2k+2}_{\mot}(\KS_k,k+1) \simeq \CH^{k+1}(\KS_k)$. Therefore, it suffices to show an element of the motivic cohomology group whose image under the cycle class map is $\Omega_{f,f}$. Summarising, we are searching for a rational equivalence class $[Z_f]\in\CH^{k+1}(\KS_k)$ of cycles of codimension $k+1$ such that
\begin{align*}
    \cl_{\dR} \colon H_{\mot}^{2k+2}(\KS_k,k+1) &\to H^{k+1,k+1}_{\dR}(\KS_k,\C) \\
    [Z_f] &\mapsto C\cdot \Omega_{f,f}, \quad C \in \C^{\times}.
\end{align*}
In the remainder of this subsubsection we construct the subvariety $Z_f$ combining together three different correspondences.
\begin{description}
    \item[Projection onto $f$-eigenspace] Since $f$ is a newform, Strong Multiplicity One implies that its eigenspace in $S_{k+2}(N,\psi)$ is $1$-dimensional. The projection onto it can be realised by a Hecke operator: let $f=f_0,f_1,\ldots,f_c$ be a basis of $S_{k+2}(N,\psi)$ over $K_f$ consisting of eigenforms. For every $i=1,2,\ldots,c$ there exists a prime $\ell_i$ such that $a_{\ell_i}(f) \neq a_{\ell_i}(f_i)$. If $f_i\neq f_j$ are in the same Galois $G_{K_f}$-orbit, then we choose $\ell_i=\ell_j$. This is possible because $a_{\ell_i}(f_j) = a_{\ell_i}(f_i)^{\sigma}$ for some $\sigma\in G_{K_f}$, so either both or none of them equals $a_{\ell_i}(f)\in K_f$.

        Define the operator:
        \[
            T_f = \prod_{i=1}^c \frac{T_{\ell_i} - a_{\ell_i}(f_i)}{a_{\ell_i}(f) - a_{\ell_i}(f_i)} \in \mathbb{T}\otimes_{\Q} K_f.
        \]
        It is easy to see that $T_f(f_i) = 0$ for every $i=1,2,\ldots,c$ and $T_f(f) = f$. Therefore $T_f$ is the $f$-isotypical projector on $S_{k+2}(N,\psi)$. We regard $T_f$ also as a correspondence on $W_k$, yielding functorially a morphism in cohomology. Note that $T_f$ is a combination of correspondences defined over $\Q$ with coefficients in $K_f$. Indeed, the chosen basis contains all the Galois conjugates of a given eigenform, under the action of $G_{K_f}$ . As a consequence, the finite linear combination defined above is fixed by $G_{K_f}$, because the coefficients are permuted among themselves thanks to our choices. Therefore, it defines an element of the Hecke algebra $\mathbb{T}\otimes_{\Q} K_f$.
    \item[Projection onto the middle Künneth component] Consider the morphisms
        \[
            \begin{tikzcd}
                W_k \arrow[bend right=25,"i_1",swap]{r} & \KS_k \arrow[bend left=25,"\pi_2"]{r} \arrow[bend right=25,"\pi_1",swap]{l} & W_k \arrow[bend left=25,"i_2"]{l}
            \end{tikzcd}
        \]
        Let $\delta_{\infty} \colon \CH^*(\KS_k) \to \CH^*(\KS_k)$ be the correspondence on $\KS_k$ given by the prescription
        \[
            \delta_{\infty}([Z]) = [Z] - i_{1,*}\circ\pi_{1,*}([Z]) - i_{2,*}\circ\pi_{2,*}([Z]).
        \]
        In~\cite[§2]{darmonrotger12} it is proved the action of $\delta_{\infty}$ on cohomology is to project onto the middle Künneth component in any degree. In particular
        \[
            \delta_{\infty}^* \colon H^{2k+2}_{\dR}(\KS_k,\C) \to \bigl(H^{k+1}_{\dR}(W_k,\C)\bigr)^{\otimes 2}.
        \]
    \item[Atkin-Lehner involution] Denote with $w_N$ the Atkin-Lehner involution induced by the action of $\bigl(\begin{smallmatrix} 0 & -1 \\ N & 0 \end{smallmatrix}\bigr)$, and with $\lambda_N(f)$ the pseudo-eigenvalue of $f$. Notice that $w_N$ is not defined over $\Q$, but only over $\Q(\mu_N)$, so it is actually only a correspondence on $W_{k,\Q(\mu_N)}$.
\end{description}
Denote also with $\mathrm{gr}$ the graph of a correspondence. For a correspondence $c$ from $C_1$ to $C_2$, its graph is a subset $\mathrm{gr}(c) \subseteq C_1 \times C_2$.

\begin{definition}
    Let $[Z_f]\in\CH^{k+1}(\KS_{k,\Q(\mu_N)})\otimes K_f$ be the class of the cycle $\delta_{\infty}(\mathrm{gr}(T_{f^*}\circ w_N))$.
\end{definition}
The composition $T_{f^*}\circ w_N$ is a correspondence on $W_k$, so its graph is a subvariety of $\KS_k$, and $\delta_{\infty}$ has a well-defined action on its equivalence class.
\begin{proposition}
    \label{prop:image-cycle-dr}
 The above class satisfies $\cl_{\dR}(Z_f) = \lambda_N(f^*)\Omega_{f,f}$.
\end{proposition}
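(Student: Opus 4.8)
The plan is to compute the de Rham cycle class of $Z_f = \delta_{\infty}(\mathrm{gr}(T_{f^*}\circ w_N))$ by pulling apart the three correspondences $T_{f^*}$, $w_N$ and $\delta_{\infty}$ and tracking what each does on the Künneth/isotypical decomposition of $H^{2k+2}_{\dR}(\KS_{k},\C)(\epsilon_k,\epsilon_k)$. First I would recall the general fact that, for a correspondence $c$ on $W_k$ (i.e. a cycle on $\KS_k = W_k\times W_k$), the de Rham class of its graph $\mathrm{gr}(c)$, viewed inside $H^{2k+2}_{\dR}(\KS_k,\C)\simeq \bigoplus_{a+b=2k+2} H^a_{\dR}(W_k)\otimes H^b_{\dR}(W_k)$, is (up to the standard sign/normalisation coming from Poincaré duality on $W_k$) the "kernel" of the induced endomorphism $c^*$ of $H^{k+1}_{\dR}(W_k,\C)$ in the middle Künneth component, plus contributions in the other Künneth components $H^0\otimes H^{2k+2}$, $H^{2k+2}\otimes H^0$, etc. This is exactly the statement that $\delta_{\infty}^*$ projects onto the middle Künneth component $\bigl(H^{k+1}_{\dR}(W_k,\C)\bigr)^{\otimes 2}$, as cited from \cite[§2]{darmon.rotger:iterated}; so applying $\delta_{\infty}$ to $\mathrm{gr}(T_{f^*}\circ w_N)$ kills all the off-middle pieces and leaves precisely the element of $H^{k+1}_{\dR}(W_k,\C)\otimes H^{k+1}_{\dR}(W_k,\C)$ representing the endomorphism $(T_{f^*}\circ w_N)^*$.

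Next I would restrict to the $(\epsilon_k,\epsilon_k)$-eigenspace, where by \eqref{eq:cohomology-decompositions} we have $H^{k+1}_{\dR}(W_k,\C)(\epsilon_k)\simeq S_{k+2}(N)\oplus\overline{S_{k+2}(N)}$, with $\omega_f$ spanning the holomorphic $f$-isotypical line and $\eta_{f^*}=\tfrac{1}{(\omega_f,\overline{\omega_f})}\overline{\omega_f}$ spanning the antiholomorphic $f^*$-isotypical line. The operator $T_{f^*}$ acts on this two-dimensional space as the projector onto the $f^*$-isotypical part, which on cohomology means: it kills $\omega_f$ (which is $f$-isotypical, $f\neq f^*$ generically) and fixes the $f^*$-isotypical vectors, namely $\omega_{f^*}$ and $\eta_f$. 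Meanwhile $w_N$ is the Atkin–Lehner involution with $w_N\omega_f = \lambda_N(f)\,\omega_{f^*}$ and, dually, $w_N\eta_{f^*}=\lambda_N(f^*)\,\eta_f$ (using $\lambda_N(f)\lambda_N(f^*) = $ the appropriate nebentypus twist, with the $f=f^*$ newform case handled directly). Composing, $(T_{f^*}\circ w_N)^*$ sends the $f^*$-line to the $f$-line and vice versa, with the two scalars combining to produce a single endomorphism whose kernel tensor is, up to the claimed constant, $\omega_f\otimes\eta_{f^*} - \eta_{f^*}\otimes\omega_f$ rescaled into $\omega_f\otimes\eta_f - \eta_f\otimes\omega_f = \Omega_{f,f}$; the factor $(\omega_f,\overline{\omega_f})$ in the definition of $\eta_{f^*}$ versus $\eta_f$, together with the Atkin–Lehner pseudo-eigenvalue, is what produces exactly $\lambda_N(f^*)$ and not some other scalar. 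One must also check that the complex-conjugation/Galois-descent issues (recall $w_N$ and hence $Z_f$ live over $\Q(\mu_N)$, and the cycle class map on $\KS_{k,\Q(\mu_N)}$ lands in $H^{2k+2}_B(\KS_{k,\Q(\mu_N)},\Z)\cap H^{k+1,k+1}_{\dR}$) are consistent, so that the resulting de Rham class is genuinely $\lambda_N(f^*)\Omega_{f,f}$ with $\lambda_N(f^*)\in K_f$.

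The main obstacle is the precise bookkeeping of constants and signs: the normalisation in the graph–correspondence dictionary (Poincaré duality sign, the factor $(\omega_f,\overline{\omega_f})=(-4\pi)^{k+1}\langle f,f\rangle$ relating $\eta_{f^*}$ and $\overline{\omega_f}$), the value of the product of the two Atkin–Lehner pseudo-eigenvalues, and the antisymmetrisation built into $\Omega_{f,f}$ via $\delta_{\infty}$ together with the swap. One has to be careful that $\delta_{\infty}$ does not symmetrise in the wrong direction and that the off-diagonal structure of $(T_{f^*}\circ w_N)^*$ (sending one isotypic line to the other) is precisely what makes the projected cycle class land in the one-dimensional space $\langle\Omega_{f,f}\rangle=\wedge^2 M_{\dR}(f)$ rather than acquiring a component in $\Sym^2 M_{\dR}(f)$. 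Once these constants are pinned down — and since everything is a computation in an explicit two-dimensional space with an explicit basis $\{\omega_f,\eta_f\}$ (resp.\ its $f^*$-counterpart), this is mechanical once the dictionary is fixed — the identity $\cl_{\dR}(Z_f)=\lambda_N(f^*)\Omega_{f,f}$ follows. Note, as already used in the proof of the preceding lemma, that only the fact that $\Omega_{f,f}$ is a de Rham class in the middle Hodge component enters, so the construction of $Z_f$ is genuinely via geometric cycle-theoretic input rather than any transcendental argument.
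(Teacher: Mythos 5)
Your proposal follows essentially the same route as the paper: use the Künneth/Poincaré-duality dictionary to compute the de~Rham class of the graph, with $\delta_\infty$ picking out the middle Künneth component, $T_{f^*}$ restricting to the $f^*$-isotypical part of $H^{k+1}_{\dR}(W_k,\C)(\epsilon_k)$, and $w_N$ contributing the pseudo-eigenvalue $\lambda_N(f^*)$. The paper carries this out concretely by integrating against a basis of differential forms and then dualising, which is the same computation you describe more abstractly as identifying the ``correspondence kernel'' of $(T_{f^*}\circ w_N)^*$. You correctly identify the delicate point — the antisymmetry producing $\Omega_{f,f}$ rather than a $\Sym^2$-component comes out of the Poincaré pairing itself (the sign in $(\eta_{f^*},\omega_f)=-1$), not from any symmetrisation built into $\delta_\infty$ — which is exactly how the paper's proof resolves it.

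Two small imprecisions worth correcting. First, in the paper's conventions $\eta_{f^*}$ (not $\eta_f$) is $f^*$-isotypical, being defined as the Poincaré dual of $\omega_f$; so the projector $T_{f^*}$ fixes $\omega_{f^*}$ and $\eta_{f^*}$ and kills $\omega_f$ and $\eta_f$, not the pair $\{\omega_{f^*},\eta_f\}$ you name. (This is consistent with the Atkin–Lehner relation you quote next, so it is a slip rather than a misconception.) Second, the product relation $\lambda_N(f)\lambda_N(f^*)=\ldots$ you invoke parenthetically is not needed: the computation only uses $w_N^*\omega_{f^*}=\lambda_N(f^*)\omega_f$ (and its analogue on $\eta_{f^*}$) once, and the paper's direct integral evaluation shows a single factor $\lambda_N(f^*)$ emerges cleanly without any pseudo-eigenvalue product appearing.
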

In the statement we are comparing classes in de Rham cohomology with coefficients in $\C$, so the base change to $\Q(\mu_N)$ and the extension of coefficients to $K_f$ do not play any role. Thanks to the flatness of $\C$, the equality would still hold over smaller fields, even though we will not use this fact.
\begin{proof}
    Let $T = T_{f^*} \circ w_N$ for notational simplicity. Denote with $\Delta \subseteq \KS_k$ the diagonal of $\KS_k$. By definition
    \[
        \cl_{\dR}(Z_f) = \rho \iff \int_{Z_f} \omega = \int_{\KS_k} \omega\rho = (\omega,\rho) \quad \forall \omega.
    \]
    In particular, if we vary $\omega$ in a basis, we can compute $\cl_{\dR}(Z_f)$ as an element of $H^{k+1,k+1}_{\dR}(\KS_k,\C)^{\vee}$ and then express it as a differential form by duality.
    Consider a basis of $H^{k+1,k+1}_{\dR}(\KS_k,\C)$ determined by the direct sum decomposition given by the Künneth isomorphism. Let $\omega \in H^{k+1,k+1}_{\dR}(\KS_k,\C)$ be a differential form in this basis, then by a standard property of the pull-back
    \[
        \int_{Z_f} \omega = \int_{\mathrm{gr}(T)} \delta_{\infty}^* \omega = \begin{cases}
            \int_{\mathrm{gr}(T)}\omega &\text{if $\omega \in (H^{k+1}_{\dR}(W_k,\C))^{\otimes 2}$} \\
            0 &\text{otherwise}
        \end{cases}
    \]
    The last equality holds because $\delta_{\infty}^*$ projects on the middle Künneth component. It suffices then to consider forms in a basis of $(H^{k+1}_{\dR}(W_k,\C))^{\otimes 2}$. Consider a basis of $H^{k+1}_{\dR}(W_k,\C)$ consisting of differential forms associated to cusp forms, which exists by the isomorphism~\eqref{eq:cohomology-decompositions}.
    Write $\omega = \omega_1 \otimes \omega_2$ with $\omega_1$ and $\omega_2$ in this basis. Again by properties of the pull-back we have
    \[
        \int_{Z_f} \omega = \int_{\mathrm{gr}(T)} \delta_{\infty}^* \omega = \int_{\mathrm{gr}(T)} \omega = \int_{\Delta} \omega_1 \otimes w_N^* T_{f^*}^* \omega_2.
    \]
    By construction $T_{f^*}$ projects onto $M_{\dR}(f^*)\otimes \C$, which is generated by $\omega_{f^*}$ and $\eta_{f^*}$. In the first case we are left with
    \[
     = \int_{\Delta} \omega_1 \otimes w_N^* \omega_{f^*} = \int_{\Delta} \omega_1 \otimes \lambda_N(f^*)\omega_f = \lambda_N(f^*)(\omega_1,\omega_f).
    \]
 Since we are working with basis vectors, the cup product $(\omega_1,\omega_f)$ is non-zero only when $\omega_1$ is a scalar multiple of the dual of $\omega_f$, and equals $1$ exactly when $\omega_1 = -(\omega_f)^* = -\eta_{f^*}$. The argument then shows that
    \begin{align*}
     &\cl_{\dR}(Z_f)(-\lambda_N(f^*)^{-1}\eta_{f^*}\otimes\omega_{f^*}) = 1 \\
     \text{similarly} \qquad &\cl_{\dR}(Z_f)(\lambda_N(f^*)^{-1}\omega_{f^*}\otimes\eta_{f^*}) = 1.
    \end{align*}
 Therefore $\cl_{\dR}$ coincides with $(-\lambda_N(f^*)^{-1}\eta_{f^*} \otimes \omega_{f^*})^* + (\lambda_N(f^*)^{-1}\omega_{f^*} \otimes \eta_{f^*})^*$. To compute these observe:
    \[
        (\omega_{f^*}\otimes \eta_{f^*},\eta_f\otimes \omega_f) = (\omega_{f^*},\eta_f)\cdot (\eta_{f^*},\omega_f) = -1.
    \]
 So we get as a result $\lambda_N(f^*)(\omega_f \otimes \eta_f -\eta_f\otimes \omega_f )$. This proves the proposition.
\end{proof}
In order to use the correspondences $T_{f^*}$ and $w_N$, we base changed to $\Q(\mu_N)$ and extended coefficients to $K_f$. We then need to modify our original diagram to reflect these changes. Notably, we now start from the motivic cohomology of $\KS_{k,\Q(\mu_N)}$. However, the two bottom rows of diagram~\eqref{diag:complex-ext} are unchanged. Indeed, $\KS_{k,\Q(\mu_N)}$ is still defined over $\Q$ (hence over $K_f$) and in Betti and de Rham cohomology we deduce by the Künneth isomorphism:
\begin{align*}
    H_B^i(\KS_{k,\Q(\mu_N)},K_f(n)) &\simeq H_B^i(\KS_k,K_f(n)) \otimes H_B^0(\Spec \Q(\mu_N),K_f) = H_B^i(\KS_k,K_f(n)), \\
    H_{\dR}^i(\KS_{k,\Q(\mu_N)},K_f(n)) &\simeq H_{\dR}^i(\KS_k,K_f(n)).
\end{align*}
Thanks to this remark we can use the commutativity of the diagram in figure~\ref{diag:complex-diagram} to finish the proof of the the next theorem.
\begin{definition}
    We define a cohomology class in $H^1_{\mot}(\Spec \Q(\mu_N),1+k-j)\otimes K_f$ as $b_f = \langle \cBF^{k,k,j}, Z_f \rangle$.
\end{definition}
\begin{theorem}[Complex regulator formula]
    \label{thm:complex-l-value}
    If $j$ is even and $0\leq j\leq k$, then $b_f$ satisfies:
    \begin{equation}
        \label{eq:deligne-regulator-bf}
        \begin{split}
            \rD(b_f) &= \langle \rD(\cBF^{k,k,j}), \lambda_N(f^*)\Omega_{f,f} \rangle \\
            &= (2\pi i)^{2(k-j)}\lambda_N(f^*)\frac{(-1)^{k-j}}{2(\omega_f,\overline{\omega_f})}\Bigl( \frac{k!}{(k-j)!} \Bigr)^2 L'(f,f,j+1).
        \end{split}
    \end{equation}
\end{theorem}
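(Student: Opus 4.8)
The plan is to obtain the formula by a diagram chase that reduces everything to results already established, so essentially no new computation is required. The key point is that the Deligne regulator $\rD$ is compatible with cup products and push-forwards (recalled in Subsection~\ref{sec:complex-diagram}), so that applying $\rD$ to the definition $b_f = \langle \cBF^{k,k,j}, Z_f\rangle$ transports the motivic pairing over $\KS_{k,\Q(\mu_N)}$ into the corresponding pairing in Deligne cohomology, $\rD(b_f) = \langle \rD(\cBF^{k,k,j}), \rD(Z_f)\rangle$, with values in $H^1_{\del}(\Spec \C, K_f(1+k-j))$. Using the commutativity of the diagram of figure~\ref{diag:complex-diagram}, augmented by the edge maps and the isotypical projections $\pr_{f,f}$, $\pi_{f,f}$ of~\eqref{diag:complex-ext} --- all of which commute with cup products and push-forwards, and whose two bottom rows are unaffected by the base change to $\Q(\mu_N)$ and the extension of scalars to $K_f$ --- this pairing can be computed in the bottom row, as the natural pairing between $\Ext^1_{\MH_{K_f}}(K_f, M_B(f\otimes f)_{K_f}^*(-j))$ and $\Ext^0_{\MH_{K_f}}(K_f, M_B(f\otimes f)_{K_f}(k+1))$.

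Next I would identify the image of $Z_f$ in that bottom row. Since $Z_f$ lies in $H^{2k+2}_{\mot}(\KS_{k,\Q(\mu_N)}, k+1)\otimes K_f \simeq \CH^{k+1}(\KS_{k,\Q(\mu_N)})\otimes K_f$, and the composite of $\rD$ with the edge map to the $\Ext^0$-group is, through the isomorphism~\eqref{eq:isomorph-ext-0}, the de Rham cycle class map $\cl_{\dR}$ (see~\cite[§7]{esnault.viehweg:deligne-beilinson}), Proposition~\ref{prop:image-cycle-dr} gives $\cl_{\dR}(Z_f) = \lambda_N(f^*)\,\Omega_{f,f}$. By the lemma preceding that proposition this class already lies in $\Ext^0_{\MH_{K_f}}(K_f, M_B(f\otimes f)_{K_f}(k+1))$ --- in fact in its $\wedge^2$-part --- so $\pi_{f,f}$ fixes it. Hence pairing $\rD(\cBF^{k,k,j})$ against $\rD(Z_f)$ coincides with pairing its image $\cBF^{f,f,j}_{\del}$ against $\lambda_N(f^*)\Omega_{f,f}$; pulling the scalar $\lambda_N(f^*) \in K_f^{\times}$ out of the bilinear pairing yields the first claimed equality $\rD(b_f) = \langle \rD(\cBF^{k,k,j}), \lambda_N(f^*)\Omega_{f,f}\rangle$.

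Finally, because $j$ is even and $0\le j\le k$, Theorem~\ref{thm:regulator-formula} evaluates $\langle \rD(\cBF^{k,k,j}), \Omega_{f,f}\rangle = (2\pi i)^{2(k-j)}\frac{(-1)^{k-j}}{2(\omega_f,\overline{\omega_f})}\Bigl(\frac{k!}{(k-j)!}\Bigr)^2 L'(f,f,j+1)$. Multiplying through by $\lambda_N(f^*)$ gives the second equality, and the proof is complete.

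The genuine difficulty is front-loaded into the preparatory results: the construction of the cycle $Z_f$ from the three correspondences $T_{f^*}$, $\delta_\infty$ and $w_N$ and the explicit evaluation $\cl_{\dR}(Z_f) = \lambda_N(f^*)\Omega_{f,f}$ (Proposition~\ref{prop:image-cycle-dr}), together with the Brunault--Chida comparison (Proposition~\ref{prop:brunault-chida-pairing}) that feeds into Theorem~\ref{thm:regulator-formula}. Relative to those, the only point needing care here is bookkeeping: that the diagrams remain commutative after base changing to $\Q(\mu_N)$ and extending scalars to $K_f$, and that $\rD(Z_f)$ lands in the $(f,f)$-isotypical (indeed $\wedge^2$) piece, so that the full Deligne pairing agrees with the pairing of the projected classes $\cBF^{f,f,j}_{\del}$ and $\lambda_N(f^*)\Omega_{f,f}$.
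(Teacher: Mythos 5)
Your proposal is correct and follows the same route as the paper's proof: commutativity of the augmented diagram in figure~\ref{diag:complex-diagram}/\eqref{diag:complex-ext} transports the motivic pairing to the Deligne/Hodge-theoretic one, Proposition~\ref{prop:image-cycle-dr} identifies $\rD(Z_f)$ with $\lambda_N(f^*)\Omega_{f,f}$ via $\cl_{\dR}$, and Theorem~\ref{thm:regulator-formula} supplies the numerical value. The paper's proof is just a terser summary of the same chase; the only nit is that $\lambda_N(f^*)$ a priori lies in $K_f(\zeta_N)^{\times}$ rather than $K_f^{\times}$, which is harmless here since the pairing is being evaluated over $\C$-coefficients.
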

\begin{proof}
    This is a direct application of the complex regulator formula proved above. The first equality is implied by the commutativity of the diagram in figure~\ref{diag:complex-diagram}. The second equality is Theorem~\ref{thm:regulator-formula}, which applies because $j$ is even.
\end{proof}

\begin{remark}
    $b_f$ is a motivic source for $L$-values, as it lies in the motivic cohomology group $H^1_{\mot}(\Spec \Q(\mu_N),1+k-j)\otimes K_f$ and its image under $\rD$ yields \emph{complex} $L$-values. In the sequel we will also relate $b_f$ to \emph{$p$-adic} $L$-values, by computing its image under $\ret$.
\end{remark}

\subsection{Non-archimedean argument}
\label{subsec:p-adic-argument}
In this subsection we apply the machinery of Subsection~\ref{sec:p-adic-diagram} to the case under consideration. Recall that at the end of that subsection we constructed a commutative diagram (figure~\ref{diag:etale-diagram}) by studying edge maps in the Hochschild-Serre spectral sequence. When we specialise to $X = \KS_{k,\Q(\mu_N)}$ (then $d = k+1$) the bottom line of the diagram reads:
\begin{multline*}
    H^1(\Q(\mu_N),H^{2k+2}_{\et}(\KS_{k,\overline{\Q}},\Q_p(2+2k-j))) \times H^0(\Q(\mu_N),H^{2k+2}_{\et}(\KS_{k,\overline{\Q}},\Q_p(1+k)))_{K_f} \\
    \to H^1_{\et}(\Spec \Q(\mu_N),\Q_p(1+k-j)) \otimes K_f.
\end{multline*}
We had to extend coefficients of the second group to $K_f$ after the definition of $Z_f$. Inside the cohomology groups $H^{2k+2}_{\et}(\KS_{k,\overline{\Q}},\Q_p(\star))$ we can find the $p$-adic Galois representations associated to the pair $(f,f)$, which are the isotypical component in étale cohomology $M_{\etbar}(f\otimes f)$ and its dual $M_{\etbar}(f\otimes f)^*$. Therefore, by inserting projections, we obtain an expression purely in terms of the cohomology of $M_{\etbar}(f\otimes f)$:
\begin{multline*}
    H^1(\Q(\mu_N),M_{\etbar}(f\otimes f)^*(-j)) \times H^0(\Q(\mu_N),M_{\etbar}(f\otimes f)(1+k)) \otimes K_f \\
    \to H^1_{\et}(\Spec \Q(\mu_N),\Q_p(1+k-j)) \otimes K_f.
\end{multline*}
The above row is connected to the previous one compatibly by projection maps. We can then equivalently compute the pairing in the cohomology of $M_{\etbar}(f\otimes f)$.
Notice also that since $f$ is a cusp form, its eigenspace is concentrated in the middle cohomological degree. This is equivalent to the fact that the representation $\rho_{f,v}$ is pure of weight $k+1$.

Furthermore, by replacing the étale regulator with the syntomic regulator we can apply the same argument to construct a commutative diagram in syntomic cohomology. Recall that under suitable hypotheses---in particular over $p$-adic fields---the first étale and syntomic cohomology groups are related by the Bloch-Kato exponential.

We introduce the following notation:
\[
    Z_f^{\diamond} = \regulator{\diamond}(Z_f), \quad \diamond\in\{\et,\syn\}.
\]
Suppose we are able to compute $\langle \rsyn(\cBF^{k,k,j}), Z_f^{\syn} \rangle$. The resulting element belongs to $H^1_{\syn}(\Spec \Q_p(\mu_N), \Q_p(1+k-j))\otimes K_f$. There is then a map:
\[
 H^1_{\syn}(\Spec \Q_p(\mu_N),\Q_p(1+k-j)) \to H^1_{\et}(\Spec \Q_p(\mu_N),\Q_p(1+k-j))
\]
given by the Bloch-Kato exponential, as $\Q_p(1+k-j)$ is crystalline hence de Rham. Notice that we moved to étale cohomology over local fields.

We would like to use this map to link the syntomic and étale diagrams, but there is an obstacle given by the fact that over $p$-adic fields the edge map from $H^{3+2k}_{\et}$ to $E_2^{0,3+2k}$ is not necessarily zero, as explained in Subsection~\ref{sec:p-adic-diagram}. This problem would defy the construction of our diagram, because we could not map Beilinson--Flach classes to $E_2^{1,2k+2}$. However, the kernel of the edge map to $E_2^{0,3+2k}$ contains all the cohomology classes that are localisations of global classes. Our construction is chiefly global, which is to say that we are dealing with classes in the cohomology of $\KS_k$ over a number field; and the classes that we consider in the local diagram are their localisations. Therefore, they lie in the kernel of the edge map to $E_2^{0,3+2k}$, and thus they can safely be mapped to $E_2^{1,2k+2}$.

By this discussion, the following diagram commutes as we are feeding it with global classes (we omit the coefficients for better readability):
\[
    \begin{tikzcd}[column sep=0.7em,cramped]
        H_{\syn}^{2k+3}(\KS_{k,\Q_p(\mu_N)}) \times H_{\syn}^{2k+2}(\KS_{k,\Q_p(\mu_N)}) \arrow{r} & H^1_{\syn}(\Spec \Q_p(\mu_N)) \arrow["\exp"]{dd} \\
     H_{\mot}^{2k+3}(\KS_k) \times H_{\mot}^{2k+2}(\KS_k) \arrow["\rsyn"]{u} \arrow["\mathrm{loc}\,\circ\,\ret"']{d} & \\
     H_{\et}^{2k+3}(\KS_{k,\Q_p(\mu_N)}) \times H_{\et}^{2k+2}(\KS_{k,\Q_p(\mu_N)}) \arrow{r} \arrow["\pr_{f,f}\circ\mathrm{edge}"']{d} &  H^1_{\et}(\Spec \Q_p(\mu_N)) \arrow[equal]{d} \\
     H^1(\Q_p(\mu_N),M_{\etbar}(f\! \otimes\! f)^*_{\Q_p}) \times H^0(\Q_p(\mu_N),M_{\etbar}(f\! \otimes\! f)_{\Q_p}) \arrow{r} &  H^1_{\et}(\Spec \Q_p(\mu_N))
    \end{tikzcd}
\]
Therefore, simply by applying $\exp$ we would obtain the value of the pairing in \emph{local} étale cohomology. It suffices then to compute the pairing in syntomic cohomology.

The rest of the argument is organised as follows:
\begin{itemize}
    \item we will show that $Z_f^{\syn}$ is canonically associated to a compactly supported cohomology class;
    \item we will split the pairing in syntomic cohomology in the sum of three terms;
    \item the first term computes the pairing on the open variety $\Ell^k \times \Ell^k$, we will link it to the $p$-adic value;
    \item the other terms compute the ``cuspidal contribution'', we will show it is always zero.
\end{itemize}

\subsubsection*{Study of $Z_f^{\syn}$}

  We recall that syntomic cohomology is characterised by sitting in a long exact sequence (induced by the Leray spectral sequence):
  \begin{multline*}
      \cdots \to H_{\syn}^{2k+2}(\KS_{k,\Q_p(\mu_N)},\Q_p(k+1)) \\
      \to F^0 H_{\dR}^{2k+2}(\KS_{k,\Q_p(\mu_N)},\Q_p(k+1)) = F^{k+1} H_{\dR}^{2k+2}(\KS_{k,\Q_p(\mu_N)},\Q_p) \\
      \to H_{\rig}^{2k+2}(\KS_{k,\Q_p(\mu_N)},\Q_p(k+1)) \to \cdots
  \end{multline*}
  We can thus write $Z_f^{\syn} = (\xi,\lambda)$ with $\xi$ and $\lambda$ respectively rigid and de Rham cohomology classes. By the compatibility of the regulator $\rsyn$ and $\cl_{\dR}$ in the Leray spectral sequence~\cite[Theorem~7.5]{besser00}, the composition of $\rsyn$ and the morphism to de Rham cohomology in the long exact sequence coincides with $\cl_{\dR}$. Therefore $\lambda$ is equal to the image of $\cl_{\dR}(Z_f) = \lambda_N(f^*)\Omega_{f,f}$ under the extension of the coefficients to $\Q_p$. Our first aim is to compute it.

  We start by studying the behaviour of $\omega_f$ and $\eta_f$ in syntomic cohomology. In order to pass to de Rham cohomology with $(K_f\otimes\Q_p)$-coefficients we must start from classes defined over $K_f$ rather than over $\C$. According to~\cite[§6.1]{KLZ17} the differentials $\omega_f, \eta_f$ can be easily modified to be $K_f$-rational. Explicitly, if we define
  \[
      \omega_f' = \tau(\psi^{-1})\omega_f, \quad \eta_f' = \tau(\psi^{-1})\eta_f
  \]
  then $\omega_f', \eta_f' \in M_{\dR}(f)$, i.e.\ they are $K_f$-rational. We can then safely regard $\omega_f', \eta_f' \in M_{\dR}(f)_{\Q_p}$.

  \begin{lemma}
   \label{lemma:frobenius-eigenspaces}
   If $f$ is supersingular, then the one-dimensional subspace $\Fil^1 M_{\dR}(f)_{\Q_p}$ is not equal to either of the Frobenius eigenspaces. In the ordinary case, it is not equal to the Frobenius eigenspace with $p$-adic unit eigenvalue.
  \end{lemma}

  \begin{proof}
   This follows from the weak admissiblity of the filtered $\varphi$-module $M_{\dR}(f)_{\Q_p}$.
  \end{proof}

  \begin{remark}
   In the ordinary case, if we label the parameters so that $v_p(\alpha_f) = 0$ and $v_p(\beta_f)=k+1$, it can occur that $\Fil^1 M_{\dR}(f)_{\Q_p}$ coincides with the $\beta_f$-eigenspace; in fact this always happens if $f$ is of CM-type with $p$ split in the CM field (but is conjectured never to occur otherwise).
  \end{remark}

  Since $\eta_f'$ is a generator of $M_{\dR}(f)_{\Q_p}/\Fil^1 M_{\dR}(f)_{\Q_p}$, by the above lemma it has a \emph{unique} lift to the eigenspace where $\phi$ acts as $\alpha_f$. Clearly there are several choices of a lift to $M_{\dR}(f)_{\Q_p}$, with an ambiguity given by an element of $\Fil^1 M_{\dR}(f)_{\Q_p}$. Thanks to the next lemma this ambiguity does not play any role in our future computations.

  \begin{lemma}
   Let $\eta_1, \eta_2 \in M_{\dR}(f)_{\Q_p}$ be two lifts of $\eta_f'$. Then $\omega_f' \otimes \eta_1 - \eta_1 \otimes \omega_f' = \omega_f' \otimes \eta_2 - \eta_2 \otimes \omega_f'$.
  \end{lemma}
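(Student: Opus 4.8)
The plan is to reduce the claim to the observation that two lifts of $\eta_f'$ differ by an element of $\Fil^1 M_{\dR}(f)_{\Q_p}$, which is one-dimensional. First I would recall from Subsection~\ref{subsec:differential-forms} that, via the Eichler--Shimura isomorphism, $\Fil^1 M_{\dR}(f)_{\Q_p}$ is spanned by $\omega_f$, hence also by $\omega_f' = \tau(\psi^{-1})\omega_f$ (a nonzero scalar multiple). Since by hypothesis $\eta_1$ and $\eta_2$ both map to the same class $\eta_f'$ in the quotient $M_{\dR}(f)_{\Q_p}/\Fil^1 M_{\dR}(f)_{\Q_p}$, their difference lies in $\Fil^1 M_{\dR}(f)_{\Q_p}$, so $\eta_1 - \eta_2 = c\,\omega_f'$ for some scalar $c\in K_f\otimes\Q_p$.

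The next step is the elementary computation: by bilinearity of the tensor product,
\begin{equation*}
    \bigl(\omega_f' \otimes \eta_1 - \eta_1 \otimes \omega_f'\bigr) - \bigl(\omega_f' \otimes \eta_2 - \eta_2 \otimes \omega_f'\bigr)
    = \omega_f' \otimes (\eta_1 - \eta_2) - (\eta_1 - \eta_2) \otimes \omega_f'
    = c\bigl(\omega_f' \otimes \omega_f' - \omega_f' \otimes \omega_f'\bigr) = 0,
\end{equation*}
which gives the claim. I do not expect any genuine obstacle here: the only thing to be careful about is that $\Fil^1 M_{\dR}(f)_{\Q_p}$ is indeed one-dimensional and generated by $\omega_f'$, which is immediate from the Hodge-filtration description in Subsection~\ref{sec:de-rham-kuga-sato} together with the fact that $M_{\dR}(f)$ is two-dimensional with the single non-trivial graded piece in degree $k+1$ identified with the $f$-isotypic line in $S_{k+2}(N)$.

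The upshot, which motivates stating the lemma, is that the element $\omega_f' \otimes \eta^{\alpha_f} - \eta^{\alpha_f} \otimes \omega_f'$ — built from any lift $\eta^{\alpha_f}$ of $\eta_f'$ to the $\phi$-eigenspace $V_\phi(\alpha_f)$, or indeed from any lift to $M_{\dR}(f)_{\Q_p}$ at all — is well defined independently of the choice of lift, so it may be used unambiguously in the syntomic regulator computation that follows.
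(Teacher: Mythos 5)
Your proof is correct and takes essentially the same approach as the paper: both reduce the difference to $\omega_f'\otimes(\eta_1-\eta_2)-(\eta_1-\eta_2)\otimes\omega_f'$, use that $\eta_1-\eta_2\in\Fil^1 M_{\dR}(f)_{\Q_p}$, which is the $1$-dimensional $(K_f\otimes\Q_p)$-line spanned by $\omega_f'$, and conclude by writing $\eta_1-\eta_2=c\,\omega_f'$.
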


  \begin{proof}
   The difference of the two is:
   \[
    (\omega_f' \otimes \eta_1 - \eta_1 \otimes \omega_f') - (\omega_f'\otimes \eta_2 - \eta_2 \otimes \omega_f') = \omega_f' \otimes (\eta_1 - \eta_2) - (\eta_1 - \eta_2)\otimes \omega_f'.
   \]
   Now $\eta_1 - \eta_2 \in \Fil^1 M_{\dR}(f)_{\Q_p}$ which is a $1$-dimensional $K_f\otimes\Q_p$-vector space. By writing $\eta_1 - \eta_2 = u\omega_f'$ for some $u\in K_f\otimes\Q_p$ the claim follows.
  \end{proof}

  Since we will only consider linear combinations of this kind, we can choose as a ``favourite'' lift the unique lift $\eta^{\alpha_f} \in V_{\phi}(\alpha_f)$ satisfying $\phi(\eta^{\alpha_f}) = \alpha_f\eta^{\alpha_f}$.

  The above argument proves that the image of $\tau(\psi^{-1})^2\Omega_{f,f}$ under extension of scalars to $\Q_p$ can be identified with $\omega_f'\otimes\eta^{\alpha_f} -\eta^{\alpha_f}\otimes\omega_f'$. We sum up this discussion in the following proposition.

  \begin{proposition}
    \label{prop:pair-representing-syn}
   $Z_f^{\syn}$ is represented by the pair $(\xi, \lambda_N(f^*)\tau(\psi^{-1})^{-2}(\omega_f'\otimes\eta^{\alpha_f} - \eta^{\alpha_f}\otimes\omega_f'))$.
  \end{proposition}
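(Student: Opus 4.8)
The goal is to pin down the de Rham component of $Z_f^{\syn}$, leaving the rigid component $\xi$ for the subsequent part of the argument. Writing $Z_f^{\syn} = (\xi,\lambda)$ with $\lambda$ a class in $F^{k+1}H^{2k+2}_{\dR}(\KS_{k,\Q_p(\mu_N)},\Q_p)\otimes K_f$ via the long exact sequence above, the plan is to identify $\lambda$ with $\lambda_N(f^*)\tau(\psi^{-1})^{-2}(\omega_f'\otimes\eta^{\alpha_f} - \eta^{\alpha_f}\otimes\omega_f')$. First I would appeal to the compatibility of the syntomic regulator $\rsyn$ with the de Rham cycle class map under the Leray spectral sequence, that is \cite[Theorem~7.5]{besser:syntomic}: the composite of $\rsyn$ with the map from syntomic to filtered de Rham cohomology in the long exact sequence above is exactly $\cl_{\dR}$. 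Consequently $\lambda$ is the image of $\cl_{\dR}(Z_f)$ under extension of scalars from $K_f$ to $K_f\otimes\Q_p$, and by Proposition~\ref{prop:image-cycle-dr} we have $\cl_{\dR}(Z_f) = \lambda_N(f^*)\Omega_{f,f}$.

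It then remains to rewrite $\Omega_{f,f}$ in terms of the $K_f$-rational differentials $\omega_f'$ and $\eta_f'$. Since $\omega_f' = \tau(\psi^{-1})\omega_f$ and $\eta_f' = \tau(\psi^{-1})\eta_f$, bilinearity of the tensor product gives $\omega_f'\otimes\eta_f' - \eta_f'\otimes\omega_f' = \tau(\psi^{-1})^2\,\Omega_{f,f}$, so $\Omega_{f,f} = \tau(\psi^{-1})^{-2}(\omega_f'\otimes\eta_f' - \eta_f'\otimes\omega_f')$. Finally I would replace the lift $\eta_f'$ of the generator of $M_{\dR}(f)_{\Q_p}/\Fil^1 M_{\dR}(f)_{\Q_p}$ by the distinguished Frobenius eigenvector lift $\eta^{\alpha_f}$ whose existence is guaranteed by Lemma~\ref{lemma:frobenius-eigenspaces}: by the preceding lemma on lifts, the antisymmetric tensor $\omega_f'\otimes\eta - \eta\otimes\omega_f'$ depends only on the class of $\eta$ modulo $\Fil^1 M_{\dR}(f)_{\Q_p}$ (two lifts differ by a multiple of $\omega_f'$, which contributes nothing to that tensor), hence $\omega_f'\otimes\eta_f' - \eta_f'\otimes\omega_f' = \omega_f'\otimes\eta^{\alpha_f} - \eta^{\alpha_f}\otimes\omega_f'$ in $M_{\dR}(f\otimes f)_{\Q_p}$. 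Chaining these identities yields the asserted formula for $\lambda$.

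The one genuinely delicate point is the first step. One has to verify that Besser's comparison really applies in our situation: $\KS_{k,\Q_p(\mu_N)}$ is smooth and proper, and the degree $2k+2 = 2(k+1)$ matched against the Tate twist $\Q_p(k+1)$ places us in the regime $H^{2k+2}_{\mot}(\KS_k,k+1)\simeq\CH^{k+1}(\KS_k)$ in which the edge map of the syntomic long exact sequence genuinely recovers $\cl_{\dR}$; and one must check that the whole discussion is unaffected by passing from $\Q(\mu_N)$ to $\Q_p(\mu_N)$ and by the extension of coefficients to $K_f$, which is harmless because $\KS_k$ is defined over $\Q$ and de Rham and Betti cohomology do not see enlargements of the field of constants, exactly as was already observed for the complex diagram. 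Once this is in place, the remaining steps are purely formal: keeping track of the $\tau(\psi^{-1})$-normalisation and invoking the two preparatory lemmas.
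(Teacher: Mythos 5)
Your proof is correct and takes essentially the same approach as the paper: Besser's compatibility of $\rsyn$ with $\cl_{\dR}$ identifies the de Rham component of $Z_f^{\syn}$ with the extension of scalars of $\cl_{\dR}(Z_f) = \lambda_N(f^*)\Omega_{f,f}$, and the rescaling by $\tau(\psi^{-1})$ together with the lift-independence lemma (any two lifts of $\eta_f'$ differ by a multiple of $\omega_f'$, which dies in the antisymmetric tensor) converts this to the stated expression with $\eta^{\alpha_f}$. Your closing paragraph verifying the hypotheses of Besser's theorem and the harmlessness of base change and coefficient extension is a sensible refinement that the paper leaves implicit.
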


  We now study the situation over the non-proper $\Ell^k \times \Ell^k$. The analogue of the long exact sequence above holds for cohomology with compact support:
  \begin{multline*}
   \cdots \to H_{\syn,c}^{2k+2}((\Ell^k \times \Ell^k)_{\Q_p(\mu_N)},\Q_p(k+1)) \\
   \to F^0 H_{\dR,c}^{2k+2}((\Ell^k \times \Ell^k)_{\Q_p(\mu_N)},\Q_p(k+1)) \\
   \to H_{\rig,c}^{2k+2}((\Ell^k \times \Ell^k)_{\Q_p(\mu_N)},\Q_p(k+1)) \to \cdots
  \end{multline*}

  Recall that $\lambda_N(f^*)\tau(\psi^{-1})^2\Omega_{f,f}$ belongs to $M_{\dR}(f\otimes f)$, which has canonical inclusions as a direct summand of the cohomology of $\Ell^k \times \Ell^k$ with and without compact support. Therefore $\lambda_N(f^*)\tau(\psi^{-1})^2\Omega_{f,f}$ has a canonical image in compactly supported de Rham cohomology, under extension of scalars to $\Q_p$. By the above exact sequence, the preimage of a compactly supported de Rham cohomology class is a compactly supported syntomic cohomology class. By putting everything together we deduce the following result.

  \begin{proposition}
   \label{prop:syntomic-compact-support}
   There exists a canonical compactly supported cohomology class $Z^{\syn}_{f,c} \in H^{2k+2}_{\syn,c}((\Ell^k \times \Ell^k)_{\Q_p(\mu_N)},\Q_p(k+1))$ that maps to $Z_f^{\syn}$ in $H^{2k+2}_{\syn}((\Ell^k\times \Ell^k)_{\Q_p(\mu_N)},\Q_p(k+1))$.
  \end{proposition}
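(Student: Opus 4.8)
The plan is to construct $Z^{\syn}_{f,c}$ from its de Rham incarnation, using the compact-support analogue of the syntomic long exact sequence displayed above together with the fact that the $(f,f)$-isotypical part of the cohomology of $\Ell^k\times\Ell^k$ is \emph{interior}, so it is carried isomorphically onto the corresponding direct summand of the cohomology of $\KS_k$. Concretely, I would first record the de Rham input: by Proposition~\ref{prop:pair-representing-syn} the class $Z_f^{\syn}$ over $\KS_{k,\Q_p(\mu_N)}$ is represented by a pair whose de Rham component $\lambda$ is the image of $\lambda_N(f^*)\Omega_{f,f}$ under extension of scalars to $\Q_p$, and $\Omega_{f,f}\in\Fil^{k+1}M_{\dR}(f\otimes f)$ (as noted in the archimedean argument), so $\lambda$ lies in $F^0$ of the twisted de Rham group. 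Since the filtered $\phi$-module $M_{\dR}(f\otimes f)_{\Q_p}$ is, compatibly, a direct summand of $H^{2k+2}_{\dR}(\KS_k,\Q_p)$ and of $H^{2k+2}_{\dR,c}(\Ell^k\times\Ell^k,\Q_p)$ (Subsection~\ref{subsec:motives}), $\lambda$ has a canonical lift $\lambda_c\in F^0 H^{2k+2}_{\dR,c}(\Ell^k\times\Ell^k,\Q_p(k+1))$ sitting in the summand $M_{\dR}(f\otimes f)_{\Q_p}$ there.

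Next I would lift $\lambda_c$ to compactly supported syntomic cohomology. In the compact-support syntomic long exact sequence a class in $F^0 H^{2k+2}_{\dR,c}(\Ell^k\times\Ell^k,\Q_p(k+1))$ comes from $H^{2k+2}_{\syn,c}$ precisely when its image in $H^{2k+2}_{\rig,c}(\Ell^k\times\Ell^k,\Q_p(k+1))$ vanishes, and such a lift is unique up to the image of $H^{2k+1}_{\rig,c}(\Ell^k\times\Ell^k,\Q_p(k+1))$. Both points are checked on the $(f,f)$-summand: the summand $M_{\rig}(f\otimes f)_{\Q_p}$ of $H^{2k+2}_{\rig,c}(\Ell^k\times\Ell^k,\Q_p)$ maps isomorphically onto the corresponding summand of $H^{2k+2}_{\rig}(\KS_k,\Q_p)$ and carries $\lambda_c$ to $\lambda$, which already dies there because $Z_f^{\syn}$ exists over $\KS_k$; and the $(f,f)$-isotypical part of $H^{2k+1}_{\rig,c}(\Ell^k\times\Ell^k)$ vanishes, a cuspidal Hecke eigensystem contributing only to the middle degree $2k+2$ (this is exactly the shape of $M_{\theory}(f\otimes f)$ in Subsection~\ref{subsec:motives}). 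I would then set $Z^{\syn}_{f,c}$ equal to this lift; it is canonical by the uniqueness just explained, and its image in $H^{2k+2}_{\syn}(\Ell^k\times\Ell^k,\Q_p(k+1))$ coincides with the restriction of $Z_f^{\syn}$, since both are $(f,f)$-isotypical with de Rham realization $\lambda$ and on the $(f,f)$-summand such a class is determined by its de Rham realization (again because $H^{2k+1}_{\rig}$ has no $(f,f)$-part).

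The main obstacle is the formal bookkeeping for Besser's syntomic cohomology with compact support: one needs that it fits in the stated long exact sequence functorially, compatibly with the sequence for $\KS_k$, and that the Hecke-eigenspace decomposition—induced by correspondences via compatibility of $\rsyn$ with push-forward and pull-back—is respected by every map involved. Once one grants the geometric fact that a cuspidal Hecke eigensystem lives on interior cohomology in the middle degree, the construction of $Z^{\syn}_{f,c}$ and its canonicity are essentially formal consequences of the above exact sequences and the direct summand statements of Subsection~\ref{subsec:motives}.
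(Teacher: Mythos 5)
Your proposal is correct and follows essentially the same route as the paper: canonically lift the de Rham component to compactly supported de Rham cohomology via the direct summand $M_{\dR}(f\otimes f)_{\Q_p}$, then invoke the compact-support syntomic long exact sequence to obtain the syntomic class. The paper's own justification is terser and does not spell out the vanishing of the rigid obstruction nor the uniqueness of the syntomic lift, which your verifications on the $(f,f)$-isotypical summand (iso of the $(f,f)$-parts of $H^{2k+2}_{\rig,c}$ and $H^{2k+2}_{\rig}(\KS_k)$, vanishing of the $(f,f)$-part in degree $2k+1$) usefully supply.
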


  \subsubsection*{Splitting the pairing}

   The pairing that we are interested in computing is:
   \[
       \langle \rsyn(\cBF^{k,k,j}),Z_f^{\syn} \rangle_{\syn, \KS_k}.
   \]
   Since our aim is to split this computation into an ``open'' and a ``cuspidal'' part, the standard strategy would be to use the Gysin long exact sequence induced by the embedding $\Ell^k \times \Ell^k \hookrightarrow \KS_k$. In our case it suffices to use the existence of Gysin maps in the category, and the following projection formula due to Besser. The theorem is stated in terms of finite-polynomial cohomology, of which syntomic cohomology is a particular case.

   \begin{theorem}[{\cite[Corollary~5.3]{besser12}}]
    Let $\iota\colon X\to Y$ be a morphism of smooth integral $\ringint{O}_K$-schemes, and identify the groups $H^{2\dim (X)+1}_{\fp,c}(X) \simeq K$ and $H^{2\dim (Y)+1}_{\fp,c}(Y) \simeq K$ via the respective trace maps. Let $d = \dim(Y) - \dim(X)$. Let $x\in H^j_{\fp,c}(X,m)$ and $y\in H^i_{\fp}(Y,n)$, and suppose $i+j = 2\dim (X)+1$ and $n+m > \dim (X)$. Then there exists a push-forward map in any degree
    \begin{gather*}
     \iota_* \colon H^{\bullet}_{\fp}(X,d_0) \to H^{\bullet+2d}_{\fp}(Y,d_0+d)
     \shortintertext{for all $d_0\in\N$, and}
     y \cup \iota_* x = \iota^* y \cup x.
    \end{gather*}
   \end{theorem}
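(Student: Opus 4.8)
This theorem is Besser's projection formula for finite-polynomial (hence syntomic) cohomology, and a complete proof is given in \cite[Corollary~5.3]{besser:on}; what follows is the shape of that argument. Finite-polynomial cohomology $H^\bullet_{\fp}(\cdot,\cdot)$ is built from rigid cohomology together with its Frobenius by a mapping-fibre construction twisted by a polynomial, and it inherits from rigid cohomology a cup product, contravariant functoriality $\iota^*$ along morphisms of smooth integral $\ringint{O}_K$-schemes, a compactly supported variant $H^\bullet_{\fp,c}$, and trace isomorphisms $H^{2\dim X+1}_{\fp,c}(X)\simeq K$. The plan is to reduce the fp-level identity $y\cup\iota_* x = \iota^* y\cup x$ to the corresponding, classical identity in rigid cohomology, keeping track of the extra finite-polynomial data, and to invoke the numerical hypotheses $i+j = 2\dim X+1$ and $n+m>\dim X$ at exactly the point where they are needed to make the relevant duality statements exact.

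First I would construct the push-forward $\iota_*\colon H^\bullet_{\fp}(X,d_0)\to H^{\bullet+2d}_{\fp}(Y,d_0+d)$. As usual one defines it by transporting $\iota^*$ through Poincaré--Lefschetz duality: duality on $X$ identifies, in the relevant degrees, $H^\bullet_{\fp}(X)$ with the $K$-linear dual of $H^{2\dim X+1-\bullet}_{\fp,c}(X)$ normalised by $\mathrm{tr}_X$; one applies $\iota^*$ in compactly supported cohomology and then duality on $Y$ normalised by $\mathrm{tr}_Y$. The positivity condition $n+m>\dim X$ on the twists is precisely what forces the defining exact triangle of fp-cohomology to degenerate in these degrees, so that the duality maps are genuine isomorphisms and the fp Poincaré pairing is perfect, rather than only fitting into an exact sequence with correction terms.

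The projection formula then follows by a pairing argument. Fixing a class $z$ in the complementary compactly supported fp-group, both $\mathrm{tr}_Y\bigl((y\cup\iota_* x)\cup z\bigr)$ and $\mathrm{tr}_X\bigl((\iota^* y\cup x)\cup\iota^* z\bigr)$ are defined, using $i+j=2\dim X+1$ so that the cup products land in the top degree carrying the trace. Unwinding the definition of $\iota_*$ through duality, and using compatibility of the trace with $\iota$ together with the projection formula at the level of rigid cohomology (itself a standard consequence of the compatibility of Gysin maps with cup products and pull-backs, and of $\mathrm{tr}_Y\circ\iota_*=\mathrm{tr}_X$), one checks that the two trace values coincide for all $z$. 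Non-degeneracy of the fp Poincaré pairing on $Y$ then yields $y\cup\iota_* x = \iota^* y\cup x$ in $H^\bullet_{\fp}(Y)$.

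The main obstacle is not this formal pairing argument but the finite-polynomial bookkeeping underneath it: one must verify that the rigid-cohomology Gysin map lifts to fp-cohomology compatibly with cup products and traces, which requires controlling how the polynomial modification of Frobenius interacts with push-forward. Push-forward along $\iota$ shifts cohomological weight by $2d$, so the polynomials governing the fp-structures on source and target differ by a predictable Tate twist and must be matched; and it is here that the hypotheses $i+j=2\dim X+1$ and $n+m>\dim X$ do the real work, the first keeping the classes in the self-dual range and the second killing the lower-degree pieces of the fp exact triangle so that everything descends cleanly from rigid to fp-cohomology. Once these compatibilities are in place the statement is purely formal.
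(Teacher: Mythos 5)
The paper does not prove this statement: it is cited verbatim as \cite[Corollary~5.3]{besser:on} and used as a black box, so there is no in-paper proof for your attempt to be compared against. What you have written is a plausible reconstruction of the shape of Besser's argument (duality-transported push-forward, reduction to rigid cohomology, the role of the two numerical hypotheses in making the fp Poincar\'e pairing perfect), and that is the right mental model for why the result holds.

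One step of your sketch does not type-check, though. You introduce an auxiliary class $z$ and compare $\mathrm{tr}_Y\bigl((y\cup\iota_* x)\cup z\bigr)$ with $\mathrm{tr}_X\bigl((\iota^* y\cup x)\cup\iota^* z\bigr)$, appealing to non-degeneracy of the pairing to conclude. But under the hypothesis $i+j=2\dim X+1$, the class $y\cup\iota_* x$ already sits in $H^{2\dim Y+1}_{\fp,c}(Y)$ and $\iota^* y\cup x$ already sits in $H^{2\dim X+1}_{\fp,c}(X)$; cupping either with a further $z$ of positive degree leaves the range carrying the trace, so the displayed traces are not defined unless $z$ is degree $0$, in which case the argument collapses to scalar multiplication and says nothing. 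The correct formulation is more direct: both sides of the projection formula already land in the top-degree compactly supported groups, each of which is identified with $K$ via the respective trace map as the statement itself stipulates, and the claim is precisely that $\mathrm{tr}_Y(y\cup\iota_* x)=\mathrm{tr}_X(\iota^* y\cup x)$ in $K$. One proves this by unwinding the duality definition of $\iota_*$ and using $\mathrm{tr}_Y\circ\iota_*=\mathrm{tr}_X$ together with the projection formula in rigid cohomology, without any auxiliary pairing class. The rest of your outline --- the lifting of Gysin maps from rigid to fp-cohomology and the control of the Frobenius-polynomial bookkeeping, where the twist condition $n+m>\dim X$ is used --- is the genuine content and matches what Besser does.
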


   In our case, $\KS_k$ and $\Ell^k \times \Ell^k$ are both schemes with a model over $\Z[1/N]$, so they can be regarded as smooth $\Z_p$-schemes; and their base changes to $\Q_p(\mu_N)$ can be regarded as smooth $\ringint{O}_{\Q_p(\mu_N)}$-schemes. The embedding $\iota\colon \Ell^k \times \Ell^k \hookrightarrow \KS_k$ is a smooth morphism, so we are in a position to apply the theorem with $d=0$. Moreover, $Z_f^{\syn} \in H^{2k+2}_{\syn}((\Ell^k\times\Ell^k)_{\Q_p(\mu_N)},\Q_p(k+1))$ is associated to a canonical compactly supported class $Z^{\syn}_{f,c}$ by Proposition~\ref{prop:syntomic-compact-support}. If we choose
   \[
       x = Z^{\syn}_{f,c}, \quad y = \rsyn(\cBF^{k,k,j})
   \]
   the hypotheses of the theorem are verified, since the sums of degrees and twists are
   \begin{gather*}
       2+2k+3+2k = 5+4k = 2\dim (\Ell^k \times \Ell^k)+1, \\
       k+1+2+2k-j = 3+3k-j > 2\dim (\Ell^k \times \Ell^k).
   \end{gather*}
   We can thus apply the theorem to find
   \[
       \mathrm{tr}_{\KS_k} (\rsyn(\cBF^{k,k,j}) \cup \iota_* Z_{f,c}^{\syn}) = \mathrm{tr}_{\Ell^k\times \Ell^k} (\iota^* \rsyn(\cBF^{k,k,j}) \cup Z_{f,c}^{\syn}).
   \]
   We can now make the following consideration:
   \begin{enumerate}
       \item $\iota_* Z_{f,c}^{\syn} = Z_f^{\syn}$ since they are represented by the same pair of cohomology classes (in rigid and de Rham cohomology, $M_{\star}(f\otimes f)$ has canonical isomorphisms between cohomologies of $\KS_k$ and $\Ell^k \times \Ell^k$ with and without compact support);
       \item $\iota^*\rsyn(\cBF^{k,k,j}) = \rsyn(\iota^*\cBF^{k,k,j})$ by compatibility of regulators and pull-backs, and by construction of $\cBF^{k,k,j}$
           \[
               \iota^*\cBF^{k,k,j} = \frac{1}{2}\iota^*\lcBF^{k,k,j} + \frac{(-1)^{k+j}}{2}\iota^*(\rho')^*\lcBF^{k,k,j}.
           \]
           Now $\iota^*(\rho')^* = (\rho'\circ \iota)^* = (\iota\circ \rho')^*$ because the two morphisms commute. Indeed, $\rho'$ swaps the two components of the fibre product, while $\iota$ embeds $\Ell^k\times\Ell^k$ into $\KS_k$. The two compositions become then, for every $P,Q\in\Ell^k$:
           \[
            \iota\circ\rho'(P,Q) = \iota(Q,P) = (Q,P) = \rho'(P,Q) = \rho'\circ \iota(P,Q).
           \]
           We deduce
           \[
               \iota^*\cBF^{k,k,j} = \frac{1}{2}\iota^*\lcBF^{k,k,j} + \frac{(-1)^{k+j}}{2}(\rho')^*\iota^*\lcBF^{k,k,j}.
           \]
           The inner pull-back unravels as:
           \begin{equation}
               \label{eq:pull-back-lcbf}
               \begin{split}
                   \iota^* \lcBF^{k,k,j} &= \iota^*(\lBF^{[k,k,j]}_{\mot} + i_{\mathrm{cusp},*}(\xi_{\beta})) \\
                   &= \iota^*(\lBF^{[k,k,j]}_{\mot}) + \iota^*(i_{\mathrm{cusp},*}(\xi_{\beta})) = \BF^{[k,k,j]}_{\mot} + \iota^*(i_{\mathrm{cusp},*}(\xi_{\beta})).
               \end{split}
           \end{equation}
           Finally, putting everything together:
           \begin{equation}
               \label{eq:pull-back-cbf}
               \begin{split}
                   \iota^* \cBF^{k,k,j} &= \frac{1}{2}\bigl(\BF^{[k,k,j]}_{\mot} + \iota^*(i_{\mathrm{cusp},*}(\xi_{\beta}))\bigr) + \frac{(-1)^{k+j}}{2}\bigl((\rho')^*\BF^{k,k,j}_{\mot} + (\rho')^*\iota^*(i_{\mathrm{cusp},*}(\xi_{\beta}))\bigr) \\
                   &= \BF^{[k,k,j]}_{\mot} + \frac{1}{2}\iota^*(i_{\mathrm{cusp},*}(\xi_{\beta})) + \frac{(-1)^{k+j}}{2}(\rho')^*\iota^*(i_{\mathrm{cusp},*}(\xi_{\beta})).
               \end{split}
           \end{equation}
   \end{enumerate}
By linearity of the trace map, we conclude that
\begin{equation}
    \label{eq:pairing-in-two}
    \begin{split}
        \langle \rsyn(\cBF^{k,k,j}),Z_f^{\syn} \rangle_{\syn, \KS_k} &= \langle \BF^{[k,k,j]}_{\syn}, Z_{f,c}^{\syn} \rangle_{\syn,\Ell^k\times\Ell^k} \\
        &+ \frac{1}{2}\langle \iota^*(i_{\mathrm{cusp},*}(\xi_{\beta}))_{\syn}, Z_{f,c}^{\syn} \rangle_{\syn,\Ell^k\times\Ell^k} \\
        &+ \frac{(-1)^{k+j}}{2}\langle \iota^*(i_{\mathrm{cusp},*}(\xi_{\beta}))_{\syn}, (\rho')^* Z_{f,c}^{\syn} \rangle_{\syn,\Ell^k\times\Ell^k}.
    \end{split}
\end{equation}

   We have therefore written the sought pairing as the sum of three terms. The first is nothing other than the pairing of the non-compactified Beilinson--Flach classes with $Z_{f,c}^{\syn}$ on the open variety, while the other two represent the contributions at the cuspidal locus. In the remainder of this subsection we will evaluate them separately, showing that the first contribution gives a $p$-adic $L$-value, while the others vanish.

   By comparing equations~\eqref{eq:pull-back-lcbf} and~\eqref{eq:pull-back-cbf} we also notice that the difference between $\iota^*\lcBF^{k,k,j}$ and $\iota^*\cBF^{k,k,j}$ is supported only on the cuspidal locus. Therefore, the pairings of their syntomic realisations with $Z_f^{\syn}$ differ only by some contributions of the form $\langle \iota^*(i_{\mathrm{cusp},*}(\xi_{\beta}))_{\syn}, - \rangle_{\syn}$.

\subsubsection*{Computing the first term}
We now compute the first pairing in equation~\eqref{eq:pairing-in-two}. Reasoning as we did for étale cohomology, we can use the edge map and the projection onto the $(f,f)$-isotypical component to link compatibly the row giving the pairing in syntomic cohomology
\begin{multline*}
    H^{3+2k}_{\syn}((\Ell^k \times \Ell^k)_{\Q_p(\mu_N)},\Q_p(2+2k-j)) \times H^{2+2k}_{\syn,c}((\Ell^k \times \Ell^k)_{\Q_p(\mu_N)},\Q_p(k+1)) \\
    \xrightarrow{\langle , \rangle_{\syn}} H^1_{\syn}(\Spec \Q_p(\mu_N),\Q_p(1+k-j))
\end{multline*}
with the following row
\begin{multline*}
        H^1(\Spec \ringint{O}_{\Q_p(\mu_N)},M_{\rig}(f\otimes f)^*(-j)) \times H^0(\Spec \ringint{O}_{\Q_p(\mu_N)},M_{\rig,c}(f\otimes f)(k+1)) \\
    \xrightarrow{\langle , \rangle_{f,f}} H^1_{\syn}(\Spec \Q_p(\mu_N),\Q_p(1+k-j)).
\end{multline*}
The two rows together form a commutative diagram. We can therefore compute the pairing along the latter one. To go even further, one can compose every map from the first row to the second, with an isomorphism, to obtain as the composition the syntomic Abel-Jacobi map. Since these isomorphisms are compatible with the cup-product and the trace map, all three rows compute the same pairing, in particular
\[
    \langle \BF^{[k,k,j]}_{\syn},Z_{f,c}^{\syn}\rangle_{\syn} = \langle \AJ_{\syn}(\BF^{[k,k,j]}_{\syn}),\AJ_{\syn}(Z_{f,c}^{\syn})\rangle_{f,f}.
\]
By~\cite[Proposition~6.3.1]{KLZ20} the pairing $\langle \AJ_{\syn}(-), \tilde{\lambda} \rangle_{f,f}$ depends only on the class of $\tilde{\lambda}$ in de Rham cohomology. This result comes from the fact that the Abel-Jacobi map has as target a quotient of $M_{\dR}(f\otimes f)_{\Q_p}$. We can then compute the above pairing regardless of the associated rigid cohomology class. By Propositions~\ref{prop:pair-representing-syn} and~\ref{prop:syntomic-compact-support} and since $M_{\dR}(f\otimes f)$ has canonical isomorphisms between cohomologies of $\KS_k$ and $\Ell^k \times \Ell^k$ with compact support, the image of $Z_{f,c}^{\syn}$ in de Rham cohomology is the class of $\lambda_N(f^*)\tau(\psi^{-1})^{-2}(\omega_f'\otimes\eta^{\alpha_f} -\eta^{\alpha_f}\otimes\omega_f')$. Therefore the pairing computes to
\[
 \langle \BF^{[k,k,j]}_{\syn},Z_{f,c}^{\syn}\rangle_{\syn} = \lambda_N(f^*)\tau(\psi^{-1})^{-2}\langle \AJ_{\syn}(\BF^{[k,k,j]}_{\syn}),\omega_f'\otimes\eta^{\alpha_f} -\eta^{\alpha_f}\otimes\omega_f')\rangle_{f,f}.
\]

The following theorem links explicitly the pairing in syntomic cohomology with the Rankin--Selberg $p$-adic $L$-function.
\begin{theorem}[{\cite[Theorem~6.5.9 and Remark~6.5.10]{KLZ20}}]
    \label{thm:klz-regulator-formula}
    If $E(f,f,j+1)\neq 0$, then
    \begin{multline*}
        \langle \AJ_{\syn}(\BF^{[k,k,j]}_{\syn}),\eta^{\alpha_f}\otimes \omega_f'\rangle \\
        = (-1)^{k-j+1}k!\binom{k}{j}\tau(\psi^{-1})^2\frac{E(f)E^*(f)}{E(f,f,j+1)}\Lgeom(F,F)(k,k,j+1).
    \end{multline*}
\end{theorem}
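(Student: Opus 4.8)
The plan is to derive this identity from the syntomic Rankin--Selberg regulator formula of Kings, Zerbes and the second author, the work consisting essentially of a careful translation of normalisations. Recall that \cite[Theorem~6.5.9 and Remark~6.5.10]{kings.loeffler:rankin-eisenstein} computes, for the chosen $p$-stabilisation $f_{\alpha_f}$, the pairing of $\AJ_{\syn}(\BF^{[k,k,j]}_{\syn})$ with $\eta^{\alpha}_f\otimes\omega_f$ — here $\eta^{\alpha}_f$ denotes the lift to the Frobenius eigenspace $V_\phi(\alpha_f)$ of a generator of $M_{\dR}(f)_{\Q_p}/\Fil^1$, and $\omega_f$ the differential attached to $f$ — as an explicit scalar times a value of the Rankin--Selberg $p$-adic $L$-function attached to $f_{\alpha_f}$. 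The geometric situation there is $Y_1(N)^2$ (equivalently $\Ell^k\times\Ell^k$), which is exactly the one in which the left-hand side of our statement lives, so no compactification intervenes at this step.

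Carried out concretely, the steps are as follows. First I would invoke loc.\ cit.\ to express $\langle \AJ_{\syn}(\BF^{[k,k,j]}_{\syn}),\eta^{\alpha}_f\otimes\omega_f\rangle$ in terms of the Rankin--Selberg $p$-adic $L$-function, noting that in the supersingular case the hypothesis $E(f,f,j+1)\neq 0$ is precisely the non-degeneracy condition under which that formula is valid, and that — by Lemma~\ref{lemma:frobenius-eigenspaces} — the argument applies equally to either Satake parameter. Second, since $\AJ_{\syn}$ factors through a quotient of $M_{\dR}(f\otimes f)_{\Q_p}$ on which $\Fil^1$ is killed (\cite[Proposition~6.3.1]{kings.loeffler:rankin-eisenstein}, used here in the form of the lemma preceding Proposition~\ref{prop:pair-representing-syn}), replacing $\eta^{\alpha}_f$ by the specific lift $\eta^{\alpha_f}$ changes nothing, and replacing $\omega_f$, $\eta^{\alpha}_f$ by the $K_f$-rational differentials $\omega_f' = \tau(\psi^{-1})\omega_f$ and the lift $\eta^{\alpha_f}$ of $\eta_f' = \tau(\psi^{-1})\eta_f$ multiplies the value by $\tau(\psi^{-1})^2$ — this is the Gauss-sum factor on the right. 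Third, I would match the Euler factors: with $\alpha_g = \alpha_f$ and $\beta_g = \beta_f$ the quantities $E(f)$, $E^*(f)$, $E(f,f,j+1)$ of Theorem~\ref{thm:geometric-l-function} coincide with those of loc.\ cit. Finally, I would reconcile the $p$-adic $L$-function: the one in \cite{kings.loeffler:rankin-eisenstein} is that of \cite{loeffler.zerbes:rankin-eisenstein}, which — by the remark following Theorem~\ref{thm:geometric-l-function} — differs from the Benois--Horté-normalised $\Lgeom(F,F)$ only by replacing $(-i)^{k-k'}$ with $(-1)^{k-k'}$ (irrelevant since $f=g$) and by a shift of the last variable by $1$ (which is exactly why the right-hand side features $\Lgeom(F,F)(k,k,j+1)$ rather than the value at $j$); the sign $(-1)^{k-j+1}$ and the combinatorial factor $k!\binom{k}{j}$ then come out of loc.\ cit.\ unchanged.

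The hard part will be the bookkeeping itself: tracking the periods, Gauss sums, signs, and the shift of the cyclotomic variable through the several conventions in play (those of \cite{kings.loeffler:rankin-eisenstein}, \cite{loeffler.zerbes:rankin-eisenstein} and \cite{benois.horte:on}), and confirming that the supersingular formula with the \emph{non-unit} Satake parameter is genuinely covered by — rather than merely analogous to — the cited statements. None of this is conceptually difficult, but it is the kind of computation where a single misplaced factor propagates into every later formula, so I would want to cross-check the result against the ordinary-case formula of \cite[§6.3]{dasgupta:factorization} (in weight $2$) as a sanity check.
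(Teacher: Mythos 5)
Your proposal is correct and matches the paper's treatment: the statement is imported verbatim as \cite[Theorem~6.5.9 and Remark~6.5.10]{kings.loeffler:rankin-eisenstein}, with the paper's only commentary being the one-sentence remark immediately following it, which assigns the ordinary case to Theorem~6.5.9 and the supersingular case (the one needed here, via $\Lgeom(F,F)$) to Remark~6.5.10. The normalisation bookkeeping you outline --- the $\tau(\psi^{-1})^2$ from using the $K_f$-rational differentials $\omega_f'$ and the eigen-lift $\eta^{\alpha_f}$ of $\eta_f'$, the well-definedness of the pairing modulo the choice of lift via \cite[Proposition~6.3.1]{kings.loeffler:rankin-eisenstein}, the matching of the Euler factors $E(f)$, $E^*(f)$, $E(f,f,j+1)$, and the unit shift of the cyclotomic variable reconciling the Loeffler--Zerbes and Benois--Horte normalisations so that the value appears at $j+1$ rather than $j$ --- is precisely the translation the paper leaves implicit, and your reading of the supersingular case (either Satake parameter works, consistent with Lemma~\ref{lemma:frobenius-eigenspaces}) is the one the paper intends.
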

\begin{remark}
    Theorem~6.5.9 in \cite{KLZ20} states the above result for the case of $f$ ordinary. Remark~6.5.10 therein covers the case of $f$ supersingular, by using the geometric $p$-adic $L$-function.
\end{remark}

\begin{lemma}
    We have
    \[
        \langle \AJ_{\syn}(\BF^{[k,k,j]}_{\syn,N}),\omega_f'\otimes\eta^{\alpha_f}\rangle = (-1)^{j+1} \langle \AJ_{\syn}(\BF^{[k,k,j]}_{\syn,N}),\eta^{\alpha_f}\otimes \omega_f'\rangle.
    \]
\end{lemma}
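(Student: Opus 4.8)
The plan is to obtain the lemma purely from the behaviour of the Beilinson--Flach class under the involution $\rho'$ swapping the two factors, together with the fact — implicit in the commutative diagram displayed just above and made explicit by \cite[Proposition~6.3.1]{kings.loeffler:rankin-eisenstein} — that $\langle \AJ_{\syn}(\BF^{[k,k,j]}_{\syn}),\tilde\lambda\rangle_{f,f}$ depends only on the image of $\tilde\lambda$ in the de Rham realisation $M_{\dR}(f\otimes f)_{\Q_p}$.

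First I would record that $\AJ_{\syn}(\BF^{[k,k,j]}_{\syn})$ lies in the $(-1)^{j+1}$-eigenspace of the algebraic transposition $s$ acting on (the cohomology of) $M_{\rig}(f\otimes f)^*$. This is the syntomic incarnation of the computation of Subsubsection~\ref{subsubsec:swapping}: the identity $(\rho')^*\BF^{[k,k,j]}_{\mot}=(-1)^{k+j}\BF^{[k,k,j]}_{\mot}$ holds already in motivic cohomology, hence descends along $\rsyn$ and along the functorial edge map and Hecke-stable projector $\pr_{f,f}$ defining $\AJ_{\syn}$; and on the degree-$(2k+2)$ middle K\"unneth component the transposition $s$ differs from $(\rho')^*$ by the Koszul sign $(-1)^{(k+1)^2}=(-1)^{k+1}$, a relation valid in any reasonable cohomology theory. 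Combining, $s\bigl(\AJ_{\syn}(\BF^{[k,k,j]}_{\syn})\bigr)=(-1)^{k+1}(-1)^{k+j}\AJ_{\syn}(\BF^{[k,k,j]}_{\syn})=(-1)^{j+1}\AJ_{\syn}(\BF^{[k,k,j]}_{\syn})$.

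Next I would use the $s$-invariance of the natural duality pairing $\langle\cdot,\cdot\rangle_{f,f}$ between (the cohomology of) $M_{\rig}(f\otimes f)^*$ and $M_{\rig,c}(f\otimes f)$: since $s$ is an involution and the $s$-action on $M_{\rig}(f\otimes f)^*$ is the transpose of that on $M_{\rig}(f\otimes f)$, one has $\langle sa,sb\rangle_{f,f}=\langle a,b\rangle_{f,f}$. Applying this with $a=\AJ_{\syn}(\BF^{[k,k,j]}_{\syn})$ and $b=\eta^{\alpha_f}\otimes\omega_f'$, and observing that here $s(\eta^{\alpha_f}\otimes\omega_f')=\omega_f'\otimes\eta^{\alpha_f}$ since both $\eta^{\alpha_f}$ and $\omega_f'$ sit in $M_{\dR}(f)_{\Q_p}$, gives
\[
\begin{aligned}
\langle \AJ_{\syn}(\BF^{[k,k,j]}_{\syn}),\omega_f'\otimes\eta^{\alpha_f}\rangle
&=\langle s\AJ_{\syn}(\BF^{[k,k,j]}_{\syn}),\eta^{\alpha_f}\otimes\omega_f'\rangle\\
&=(-1)^{j+1}\langle \AJ_{\syn}(\BF^{[k,k,j]}_{\syn}),\eta^{\alpha_f}\otimes\omega_f'\rangle,
\end{aligned}
\]
which is the assertion.

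The step deserving the most care is the first one: Subsubsection~\ref{subsubsec:swapping} states the $s$-eigenvalue only for $\theory\in\{\del,\et\}$, so I would spell out why it persists for syntomic (equivalently rigid) cohomology. The point is that both ingredients are theory-independent: the comparison between $s$ and $(\rho')^*$ is a formal consequence of the graded-commutativity of the cup product, and the $(\rho')^*$-eigenvalue of $\BF^{[k,k,j]}$ is already known motivically, so it transports through every regulator. One must also check that $\AJ_{\syn}$ commutes with $(\rho')^*$, which holds because $\rho'$ merely interchanges the two Hecke actions while $\pr_{f,f}$ projects to the locus where $T_l$ and $T_l'$ act by $a_l(f)$ on \emph{both} factors. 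Alternatively, one can bypass $\AJ_{\syn}$ entirely: $\rho'$ is an automorphism of $\Ell^k\times\Ell^k$ over $\Q_p(\mu_N)$, so the syntomic cup-product pairing there is $(\rho')^*$-invariant, and feeding it $(\rho')^*\BF^{[k,k,j]}_{\syn}=(-1)^{k+j}\BF^{[k,k,j]}_{\syn}$ together with $(\rho')^*(\omega_f'\otimes\eta^{\alpha_f})=(-1)^{k+1}\eta^{\alpha_f}\otimes\omega_f'$ yields the same identity, after invoking \cite[Proposition~6.3.1]{kings.loeffler:rankin-eisenstein} to replace $\omega_f'\otimes\eta^{\alpha_f}$ by the compactly supported class representing it.
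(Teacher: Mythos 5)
Your proof is correct and is essentially the same argument as the paper's: both rest on the $\rho^*$- (equivalently $s$-) eigenvalue $(-1)^j$ of the non-compactified Beilinson--Flach class from \cite[Proposition~5.2.3]{kings.loeffler:rankin-eisenstein1}, the compatibility of the pairing with the swap involution, and the Koszul sign relating $s$ and $\rho^*$ on the K\"unneth component; you merely reorganise the bookkeeping by tracking the $s$-eigenvalue of $\AJ_{\syn}(\BF^{[k,k,j]}_{\syn})$ directly rather than moving $\rho^*$ across the pairing as the paper does. The extra care you take in checking that the eigenvalue computation of Subsubsection~\ref{subsubsec:swapping} transports to the syntomic realisation, and the alternative formulation bypassing $\AJ_{\syn}$, are both sound.
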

  \begin{proof}
   The non-compactified Beilinson--Flach classes live in the cohomology of $\Ell^k\times\Ell^k$ and of $Y_1(N)^2$. We regard them and the involved differentials as classes in the cohomology of $Y_1(N)^2$. The differential forms are then in the second cohomology group, for which the Künneth isomorphism reads
   \[
       H^2(Y_1(N)^2,\TSym^{[k,k]} \HS_{\mot}(\Ell)(2)) \simeq \bigl(H^1(Y_1(N),\TSym^k \HS_{\mot}(\Ell)(1))\bigr)^{\otimes 2}.
   \]
   Let $s$ be the map that swaps the factors in the tensor product, and $\rho$ the morphism exchanging the two factors of $Y_1(N)^2$. Since the isomorphism is induced by the cup product, $s$ and $\rho^*$ differ by a sign that depends on the degree. We deduce $\omega_f'\otimes\eta^{\alpha_f} = s(\eta^{\alpha_f}\otimes\omega_f') = -\rho^*(\eta^{\alpha_f}\otimes\omega_f')$. Now~\cite[Proposition~5.2.3]{KLZ17} implies
   \begin{align*}
    \langle \AJ_{\syn}(\BF^{[k,k,j]}_{\syn,N}), \omega_f'\otimes\eta^{\alpha_f} \rangle &= \langle \AJ_{\syn}(\BF^{[k,k,j]}_{\syn,N}), -\rho^*(\eta^{\alpha_f}\otimes\omega_f') \rangle \\
    &= -\langle \rho^*(\AJ_{\syn}(\BF^{[k,k,j]}_{\syn,N})), \eta^{\alpha_f}\otimes\omega_f' \rangle \\
    &= (-1)^{j+1} \langle \AJ_{\syn}(\BF^{[k,k,j]}_{\syn,N}), \eta^{\alpha_f}\otimes \omega_f' \rangle.\qedhere
   \end{align*}
  \end{proof}

  \begin{remark}
   The argument of the proof would apply to the second version of the compactified classes, though not to the first one, because of the extra correction term $i_{\mathrm{cusp},*}(\xi_{\beta})$. Indeed, this term is defined asymmetrically with respect to the two factors of the fibre product. There is then little hope that $\lcBF^{k,k,j}$ is in any of the eigenspaces for the swapping involution in general (see Subsubsection~\ref{subsubsec:swapping}), while the classes $\BF^{[k,k,j]}$ and $\cBF^{k,k,j}$ show a better behaviour.
  \end{remark}

The above combine into
\begin{theorem}
    \label{thm:first-term}
    If $j$ is even, $0\leq j\leq k$ and $E(f,f,j+1)\neq 0$, then
    \[
        \langle \BF^{[k,k,j]}_{\syn},Z_{f,c}^{\syn}\rangle_{\syn} = (-1)^k k!\binom{k}{j}\lambda_N(f^*)\frac{2E(f)E^*(f)}{E(f,f,j+1)}\Lgeom(F,F)(k,k,j+1).
    \]
\end{theorem}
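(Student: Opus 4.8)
The plan is to assemble the ingredients collected in this subsection, the only genuine work being the bookkeeping of signs and of the powers of the Gauss sum $\tau(\psi^{-1})$. First I would pass from the cup-product pairing on $\Ell^k\times\Ell^k$ to the Abel--Jacobi formalism: since the syntomic Abel--Jacobi map is compatible with cup products and trace maps (and with the edge map and the projection onto the $(f,f)$-isotypical component), one has $\langle \BF^{[k,k,j]}_{\syn},Z_{f,c}^{\syn}\rangle_{\syn} = \langle \AJ_{\syn}(\BF^{[k,k,j]}_{\syn}),\AJ_{\syn}(Z_{f,c}^{\syn})\rangle_{f,f}$. Then, by \cite[Proposition~6.3.1]{kings.loeffler:rankin-eisenstein}, this pairing depends only on the image of $Z^{\syn}_{f,c}$ in de Rham cohomology, which by Propositions~\ref{prop:pair-representing-syn} and~\ref{prop:syntomic-compact-support} (using that $M_{\dR}(f\otimes f)$ has canonical isomorphisms between the cohomologies of $\KS_k$ and of $\Ell^k\times\Ell^k$ with and without compact support) is the class of $\lambda_N(f^*)\tau(\psi^{-1})^{-2}\bigl(\omega_f'\otimes\eta^{\alpha_f}-\eta^{\alpha_f}\otimes\omega_f'\bigr)$. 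Factoring out the scalar, I am left to compute $\langle \AJ_{\syn}(\BF^{[k,k,j]}_{\syn}),\,\omega_f'\otimes\eta^{\alpha_f}-\eta^{\alpha_f}\otimes\omega_f'\rangle$.

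Next I would expand this pairing by bilinearity and feed in the swapping lemma just proved, namely $\langle \AJ_{\syn}(\BF^{[k,k,j]}_{\syn}),\omega_f'\otimes\eta^{\alpha_f}\rangle = (-1)^{j+1}\langle \AJ_{\syn}(\BF^{[k,k,j]}_{\syn}),\eta^{\alpha_f}\otimes\omega_f'\rangle$. Since $j$ is even by hypothesis, $(-1)^{j+1}=-1$, so the two contributions reinforce each other and one obtains $\langle \AJ_{\syn}(\BF^{[k,k,j]}_{\syn}),\omega_f'\otimes\eta^{\alpha_f}-\eta^{\alpha_f}\otimes\omega_f'\rangle = -2\,\langle \AJ_{\syn}(\BF^{[k,k,j]}_{\syn}),\eta^{\alpha_f}\otimes\omega_f'\rangle$. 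At this point the hypothesis $E(f,f,j+1)\neq 0$ lets me invoke Theorem~\ref{thm:klz-regulator-formula}, which evaluates $\langle \AJ_{\syn}(\BF^{[k,k,j]}_{\syn}),\eta^{\alpha_f}\otimes\omega_f'\rangle$ as $(-1)^{k-j+1}k!\binom{k}{j}\tau(\psi^{-1})^2\frac{E(f)E^*(f)}{E(f,f,j+1)}\Lgeom(F,F)(k,k,j+1)$.

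Finally I would multiply the scalars together: the factor $\tau(\psi^{-1})^{-2}$ coming from the de Rham realisation of $Z^{\syn}_{f,c}$ cancels the $\tau(\psi^{-1})^2$ in Theorem~\ref{thm:klz-regulator-formula}, and the sign collapses as $(-2)\cdot(-1)^{k-j+1}=2(-1)^{k-j}=2(-1)^{k}$ (again using $j$ even), which produces exactly $(-1)^k k!\binom{k}{j}\lambda_N(f^*)\frac{2E(f)E^*(f)}{E(f,f,j+1)}\Lgeom(F,F)(k,k,j+1)$. I do not expect a real obstacle here, since all the substantial inputs — the construction of the compactly supported class $Z^{\syn}_{f,c}$, the identification of its de Rham class as a multiple of $\Omega_{f,f}$, the $\phi$-eigenspace analysis fixing the lift $\eta^{\alpha_f}$, and the KLZ syntomic regulator formula — have already been established; the point requiring care is purely the normalisation bookkeeping, in particular verifying that the $K_f$-rational differentials $\omega_f'=\tau(\psi^{-1})\omega_f$ used in Theorem~\ref{thm:klz-regulator-formula} are matched with the $\tau(\psi^{-1})^{-2}$ appearing in Proposition~\ref{prop:pair-representing-syn} so that the Gauss-sum powers cancel and the answer is $K_f$-rational as it must be.
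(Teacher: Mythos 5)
Your proposal is correct and follows exactly the same chain of reasoning the paper uses: reduce to the Abel--Jacobi pairing, replace $Z_{f,c}^{\syn}$ by its de Rham class $\lambda_N(f^*)\tau(\psi^{-1})^{-2}(\omega_f'\otimes\eta^{\alpha_f}-\eta^{\alpha_f}\otimes\omega_f')$, apply the swapping lemma to collect the two terms into $-2\langle\AJ_{\syn}(\BF^{[k,k,j]}_{\syn}),\eta^{\alpha_f}\otimes\omega_f'\rangle$, and invoke Theorem~\ref{thm:klz-regulator-formula}. Your sign and Gauss-sum bookkeeping (the $\tau(\psi^{-1})^{\pm 2}$ cancellation and $(-2)(-1)^{k-j+1}=2(-1)^k$ for $j$ even) is also exactly how the paper arrives at the stated constant.
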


\subsubsection*{Computing the cuspidal terms}
It remains to compute the other terms on the right hand side of equation~\eqref{eq:pairing-in-two}. As we have shown that the first one computes to a $p$-adic $L$-value, the second and third one have to be regarded as ``error terms''. We now show that they vanish, by showing that the cohomology class corresponding to $\iota^*(i_{\mathrm{cusp},*}(\xi_{\beta}))$ is already zero in motivic cohomology.
\begin{lemma}
    \label{lemma:cuspidal-is-zero}
    We have $\iota^*(i_{\mathrm{cusp},*}(\xi_{\beta})) = 0$.
\end{lemma}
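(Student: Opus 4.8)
show that $\iota^*(i_{\mathrm{cusp},*}(\xi_{\beta})) = 0$, where $\iota \colon \Ell^k \times \Ell^k \hookrightarrow \KS_k$ is the open embedding, $i_{\mathrm{cusp}} \colon Z^k \times \Ell^k \hookrightarrow U^{k,k}$ is the cuspidal embedding, and $\xi_{\beta} \in H^{2k+1}_{\mot}(Z^k \times \Ell^k, k+1)(\epsilon_k,\epsilon_k)$ is the class used by Brunault–Chida to correct the residue.

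\textbf{Approach.} The plan is to exploit the geometry of the cuspidal locus and the excision/localisation sequences. The key point is that the image of $i_{\mathrm{cusp},*}(\xi_{\beta})$ lives in the part of $H^{3+2k}_{\mot}(U^{k,k}, 2+2k)$ supported on $Z^k \times \Ell^k$; this subvariety is disjoint from the open subset $\Ell^k \times \Ell^k$. More precisely, $\Ell^k \times \Ell^k = U^k \times U^k$ is the locus where \emph{both} factors miss the cuspidal fibre $Z^k$, whereas $Z^k \times \Ell^k$ is the locus where the \emph{first} factor lies over the cusps. These two loci inside $U^{k,k} = \hat{\Ell}^{k,*} \times \hat{\Ell}^{k,*} \setminus (Z^k \times Z^k)$ are disjoint: a point of $Z^k \times \Ell^k$ has first coordinate in $Z^k$, so it cannot lie in $U^k \times U^k$ where the first coordinate is in $U^k = \Ell^k$. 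Therefore the composite $\iota^* \circ i_{\mathrm{cusp},*}$ factors through restriction of a class supported on a closed subvariety to the complement of that subvariety, which is zero by the basic compatibility of Gysin pushforward with the localisation exact sequence (restriction to the open complement of the support of a pushforward class is zero).

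\textbf{Steps in order.} First, I would set up the diagram of inclusions: $\Ell^{k,k} = U^k \times U^k \hookrightarrow U^{k,k}$ and $Z^k \times \Ell^k = Z^k \times U^k \hookrightarrow U^{k,k}$, and observe $(U^k \times U^k) \cap (Z^k \times U^k) = (U^k \cap Z^k) \times U^k = \varnothing$ inside $\hat{\Ell}^{k,*} \times \hat{\Ell}^{k,*}$, since $U^k = \Ell^k$ and $Z^k$ are disjoint in $\hat{\Ell}^{k,*}$ by definition. Second, I would invoke the standard fact that for a closed immersion $i \colon W \hookrightarrow V$ with open complement $j \colon V \setminus W \hookrightarrow V$, the composite $j^* \circ i_* = 0$ on motivic cohomology (this is immediate from the localisation triangle / purity). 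Here one needs a mild refinement: $Z^k \times U^k$ is closed in $U^{k,k}$ only after we restrict to an appropriate open, but it is enough that $\Ell^{k,k}$ is contained in the open complement $U^{k,k} \setminus (Z^k \times U^k)$, which is exactly the disjointness statement above. Third, since $\iota$ (the embedding $\Ell^{k,k} \hookrightarrow \KS_k$) factors as $\Ell^{k,k} \hookrightarrow \hat{\Ell}^{k,k,*} \xrightarrow{\sim} W_k \times W_k = \KS_k$ (using Theorem~\ref{thm:brunault-chida}), and $i_{\mathrm{cusp},*}(\xi_{\beta})$ is by construction the image of a class on $U^{k,k}$ (then transported to $\hat{\Ell}^{k,k,*}$, and so to $\KS_k$), the pullback $\iota^*(i_{\mathrm{cusp},*}(\xi_{\beta}))$ is computed by first restricting to $U^{k,k}$ and then to $\Ell^{k,k} \subseteq U^{k,k} \setminus \operatorname{supp}(i_{\mathrm{cusp},*}\xi_{\beta})$, which vanishes.

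\textbf{Main obstacle.} The delicate point is bookkeeping the chain of identifications between cohomology of $\KS_k$, of $\hat{\Ell}^{k,k,*}$, and of $U^{k,k}$: the class $i_{\mathrm{cusp},*}(\xi_{\beta})$ as it appears in the definition of $\lcBF^{k,k,j}$ for $j=0$ (equation~\eqref{eq:brunault-chida-lift}) is a class on $U^{k,k}$, and $\lcBF^{k,k,0}$ is a chosen preimage in $H^{3+2k}_{\mot}(\hat{\Ell}^{k,k,*},\ldots)$ of its image. Strictly speaking $\iota^*(i_{\mathrm{cusp},*}(\xi_{\beta}))$ should be read as the pullback to $\Ell^{k,k}$ of the $U^{k,k}$-class $i_{\mathrm{cusp},*}(\xi_{\beta})$ (the pullback factors through $U^{k,k}$ by construction, as noted in the paragraph following equation~\eqref{eq:brunault-chida-lift}). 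Once this identification is made explicit, the disjointness of $\Ell^{k,k}$ from $Z^k \times \Ell^k$ inside $U^{k,k}$ finishes the argument immediately. I do not anticipate any genuinely hard computation; the entire content is the disjointness of the relevant loci plus the compatibility of Gysin pushforward with localisation, both of which are formal.
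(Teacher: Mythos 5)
Your proposal is correct and takes essentially the same route as the paper: in both cases the class $i_{\mathrm{cusp},*}(\xi_{\beta})$ is recognised as a pushforward from a closed subvariety disjoint from $\Ell^k\times\Ell^k$, and the vanishing follows from the localisation sequence ($j^*\circ i_*=0$). The only cosmetic difference is that the paper factors $\Ell^{k,k}\hookrightarrow U^{k,k}$ through the intermediate open $\hat{\Ell}^{k,*}\times\Ell^k$ and applies base change there, whereas you observe directly that $\Ell^{k,k}$ lies in the open complement $\Ell^k\times\hat{\Ell}^{k,*}$ of $Z^k\times\Ell^k$ in $U^{k,k}$ (and, incidentally, $Z^k\times\Ell^k$ \emph{is} already closed in $U^{k,k}$, so your hedge about needing to ``restrict to an appropriate open'' is unnecessary).
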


\begin{proof} See \cite[Proposition~9.4]{brunaultchida16}.\end{proof}

\subsubsection*{Conclusion}
The above results taken together prove the following.
\begin{theorem}
    If $j$ is even, $0\leq j\leq k$ and $E(f,f,j+1)\neq 0$, then
\begin{multline*}
    \langle \rsyn(\cBF^{k,k,j}),Z_f^{\syn} \rangle_{\syn, \KS_k} = \langle \rsyn(\lcBF^{k,k,j}),Z_f^{\syn} \rangle_{\syn, \KS_k} \\
 = (-1)^k k!\binom{k}{j}\lambda_N(f^*)\frac{2E(f)E^*(f)}{E(f,f,j+1)}\Lgeom(F,F)(k,k,j+1).
\end{multline*}
\end{theorem}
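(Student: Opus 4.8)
The plan is to assemble the pieces established in this subsection: the decomposition of the pairing in equation~\eqref{eq:pairing-in-two}, the evaluation of its first (``open'') term in Theorem~\ref{thm:first-term}, and the vanishing of the cuspidal correction in Lemma~\ref{lemma:cuspidal-is-zero}. First I would recall that \eqref{eq:pairing-in-two} writes
$\langle \rsyn(\cBF^{k,k,j}),Z_f^{\syn} \rangle_{\syn,\KS_k}$
as the sum of $\langle \BF^{[k,k,j]}_{\syn}, Z_{f,c}^{\syn}\rangle_{\syn,\Ell^k\times\Ell^k}$ and two cuspidal terms, each of the shape $\langle \iota^*(i_{\mathrm{cusp},*}(\xi_\beta))_{\syn}, -\rangle_{\syn}$. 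By Lemma~\ref{lemma:cuspidal-is-zero} the class $\iota^*(i_{\mathrm{cusp},*}(\xi_\beta))$ already vanishes in motivic cohomology, so its syntomic realisation $\rsyn(\iota^*(i_{\mathrm{cusp},*}(\xi_\beta)))$ is zero, and both cuspidal terms vanish identically, whatever the second argument. Thus $\langle \rsyn(\cBF^{k,k,j}),Z_f^{\syn} \rangle_{\syn,\KS_k} = \langle \BF^{[k,k,j]}_{\syn}, Z_{f,c}^{\syn}\rangle_{\syn}$, and Theorem~\ref{thm:first-term}, applicable since $j$ is even, $0\le j\le k$ and $E(f,f,j+1)\neq 0$, identifies the latter with $(-1)^k k!\binom{k}{j}\lambda_N(f^*)\tfrac{2E(f)E^*(f)}{E(f,f,j+1)}\Lgeom(F,F)(k,k,j+1)$.

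For the remaining equality $\langle \rsyn(\cBF^{k,k,j}),Z_f^{\syn} \rangle_{\syn,\KS_k} = \langle \rsyn(\lcBF^{k,k,j}),Z_f^{\syn} \rangle_{\syn,\KS_k}$, I would run the same argument for $\lcBF^{k,k,j}$: by equation~\eqref{eq:pull-back-lcbf} the pull-back $\iota^*\lcBF^{k,k,j}$ equals $\BF^{[k,k,j]}_{\mot}$ up to the single cuspidal term $\iota^*(i_{\mathrm{cusp},*}(\xi_\beta))$, so applying Besser's projection formula exactly as in the derivation of \eqref{eq:pairing-in-two} expresses $\langle \rsyn(\lcBF^{k,k,j}),Z_f^{\syn}\rangle_{\syn,\KS_k}$ as the same open term $\langle \BF^{[k,k,j]}_{\syn}, Z_{f,c}^{\syn}\rangle_{\syn}$ plus $\tfrac{1}{2}\langle \iota^*(i_{\mathrm{cusp},*}(\xi_\beta))_{\syn}, Z_{f,c}^{\syn}\rangle_{\syn}$, which vanishes by Lemma~\ref{lemma:cuspidal-is-zero}. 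Equivalently, comparing \eqref{eq:pull-back-lcbf} and \eqref{eq:pull-back-cbf} shows $\iota^*\cBF^{k,k,j}$ and $\iota^*\lcBF^{k,k,j}$ differ only by classes supported on the cuspidal locus, so their pairings with $Z_f^{\syn}$ agree.

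There is no genuine obstacle left here; the statement is a bookkeeping synthesis, the substantive work having been done in Theorem~\ref{thm:first-term} (the Besser projection formula, the recasting of the Kings--Loeffler--Zerbes regulator formula via the de Rham class of $Z_f$ given by Propositions~\ref{prop:pair-representing-syn} and~\ref{prop:syntomic-compact-support}, and Lemma~\ref{lemma:frobenius-eigenspaces}) and in Lemma~\ref{lemma:cuspidal-is-zero} (the Brunault--Chida excision argument). The only point deserving a word of care is the mixing of varieties: the open term lives on $\Ell^k\times\Ell^k$ whereas $Z_f^{\syn}$ lives on $\KS_k$; this is legitimate because $Z_f^{\syn}$ is canonically associated to the compactly supported class $Z^{\syn}_{f,c}$ by Proposition~\ref{prop:syntomic-compact-support}, and $M_{\dR}(f\otimes f)$ and $M_{\rig}(f\otimes f)$ carry canonical identifications between the cohomologies of $\KS_k$ and of $\Ell^k\times\Ell^k$ with and without compact support, so the trace maps and cup products used on both sides are compatible.
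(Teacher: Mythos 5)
Your proposal is correct and takes essentially the same route as the paper's proof: split the pairing via equation~\eqref{eq:pairing-in-two} and Besser's projection formula, kill the cuspidal contributions using Lemma~\ref{lemma:cuspidal-is-zero} (which makes the class vanish already in motivic cohomology, hence in every realisation), and apply Theorem~\ref{thm:first-term} to the surviving open term; the paper proves the $\lcBF$/$\cBF$ agreement by writing their difference as a combination of cuspidal pairings, which is your ``equivalently'' formulation. One small slip worth flagging: when you expand $\langle\rsyn(\lcBF^{k,k,j}),Z_f^{\syn}\rangle$ directly via~\eqref{eq:pull-back-lcbf}, the cuspidal term appears with coefficient $1$, not $\tfrac12$ --- the $\tfrac12$ arises only from the symmetrisation defining $\cBF^{k,k,j}$ --- though this is immaterial here since the term vanishes.
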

\begin{proof}
    By Lemma~\ref{lemma:cuspidal-is-zero}, all the terms in equation~\eqref{eq:pairing-in-two} vanish bar the first, hence
    \[
        \langle \rsyn(\cBF^{k,k,j}),Z_f^{\syn} \rangle_{\syn, \KS_k} = \langle \BF^{[k,k,j]}_{\syn}, Z_{f,c}^{\syn} \rangle_{\syn,\Ell^k\times\Ell^k}.
    \]
    Applying Theorem~\ref{thm:first-term} shows that this computes to the $p$-adic $L$-value in the statement of the theorem. The other equality follows from the discussion after equation~\eqref{eq:pairing-in-two}. Indeed, the difference of the two pairings equals
    \[
        \frac{(-1)^{k+j}}{2}\langle \iota^*(i_{\mathrm{cusp},*}(\xi_{\beta}))_{\syn}, (\rho')^* Z_{f,c}^{\syn} \rangle_{\syn,\Ell^{k,k}} - \frac{1}{2}\langle \iota^*(i_{\mathrm{cusp},*}(\xi_{\beta}))_{\syn}, Z_{f,c}^{\syn} \rangle_{\syn,\Ell^{k,k}}
    \]
    which is again zero by the proved results.
\end{proof}
We then obtain the main theorem of this subsection.
\begin{theorem}[$p$-adic regulator formula]
    \label{thm:syntomic-l-value}
    If $j$ is even, $0\leq j\leq k$ and $E(f,f,j+1)\neq 0$, then
    \[
  \rsyn(b_f) = (-1)^k k!\binom{k}{j}\lambda_N(f^*)\frac{2E(f)E^*(f)}{E(f,f,j+1)}\Lgeom(F,F)(k,k,j+1).
    \]
\end{theorem}
\begin{proof}
    By the commutativity of the diagram expressing the compatibility of $\rsyn$ with cup product and trace map, we directly compute
    \[
        \rsyn(b_f) = \rsyn(\langle \cBF^{k,k,j}, Z_f\rangle) = \langle \rsyn(\cBF^{k,k,j}),Z_f^{\syn} \rangle_{\syn,\KS_k}
    \]
    and we conclude by the previous proposition thanks to the hypothesis on $j$.
\end{proof}
Theorem~\ref{thm:syntomic-l-value} expresses the syntomic regulator of $b_f$ in terms of $p$-adic $L$-values. Since étale cohomology is where we truly find $p$-adic Galois representations and their Galois cohomology groups (for example Selmer groups), we also give the following equivalent formulation in terms of the étale regulator of a motivic cohomology class.
\begin{corollary}
    If $j$ is even, $0\leq j\leq k$ and $E(f,f,j+1)\neq 0$, then the local étale cohomology class $\mathrm{loc}_p(\ret(b_f))\in H^1_{\et}(\Spec \Q_p(\mu_N),\Q_p(1+k-j))\otimes K_f$ satisfies the formula:
    \begin{equation}
        \label{eq:etale-l-value}
        \log(\mathrm{loc}_p(\ret(b_f))) = (-1)^k k!\binom{k}{j}\lambda_N(f^*)\frac{2E(f)E^*(f)}{E(f,f,j+1)}\Lgeom(F,F)(k,k,j+1).
    \end{equation}
\end{corollary}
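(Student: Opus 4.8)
The plan is to deduce the corollary directly from the $p$-adic regulator formula of Theorem~\ref{thm:syntomic-l-value} by transporting that identity from syntomic to local étale cohomology. First I would observe that, over the $p$-adic field $\Q_p(\mu_N)$, the Galois representation $\Q_p(1+k-j)$ is a Tate twist of the trivial representation, hence crystalline and in particular de Rham; consequently the Bloch–Kato exponential furnishes a map
\begin{equation*}
\exp\colon H^1_{\syn}(\Spec\Q_p(\mu_N),\Q_p(1+k-j))\to H^1_{\et}(\Spec\Q_p(\mu_N),\Q_p(1+k-j)),
\end{equation*}
whose inverse on the image is the Bloch–Kato logarithm $\log$. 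This is exactly the vertical arrow labelled $\exp$ in the commutative diagram of Subsection~\ref{subsec:p-adic-argument}.

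Next I would invoke the commutativity of that diagram. The crucial point is that $b_f=\langle\cBF^{k,k,j},Z_f\rangle$ is a \emph{global} motivic class: both $\cBF^{k,k,j}$ and $Z_f$ are defined over the number field $\Q(\mu_N)$, so their $p$-adic localizations lie in the kernel of the (possibly non-zero) edge map from $H^{3+2k}_{\et}$ to $E_2^{0,3+2k}$ over the $p$-adic field, which is precisely the condition under which the diagram of Subsection~\ref{subsec:p-adic-argument} commutes. Feeding the pair $(\cBF^{k,k,j},Z_f)$ through the diagram, the syntomic route produces $\rsyn(b_f)$ in $H^1_{\syn}(\Spec\Q_p(\mu_N),\Q_p(1+k-j))\otimes K_f$, while the étale route produces $\mathrm{loc}_p(\ret(b_f))$, and commutativity yields $\exp(\rsyn(b_f))=\mathrm{loc}_p(\ret(b_f))$. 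Applying $\log$, and using the standard normalization which identifies $H^1_{\syn}(\Spec\Q_p(\mu_N),\Q_p(1+k-j))$ with $\Q_p(\mu_N)\otimes K_f$ compatibly with $\exp$, one obtains $\log(\mathrm{loc}_p(\ret(b_f)))=\rsyn(b_f)$ under this identification.

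Finally I would substitute the explicit value of $\rsyn(b_f)$ given by Theorem~\ref{thm:syntomic-l-value}, valid under exactly the hypotheses $j$ even, $0\le j\le k$, $E(f,f,j+1)\neq 0$, to arrive at~\eqref{eq:etale-l-value}. The only genuinely delicate point is the identity $\exp\circ\rsyn=\mathrm{loc}_p\circ\ret$ on global classes; this is the compatibility of Besser's syntomic regulator with the $p$-adic étale regulator through the Bloch–Kato exponential (cf.\ \cite{besser:syntomic} and the discussion preceding the diagram), and it is here that globality of $b_f$ is essential, since it is what places the relevant classes in the kernel of the bad edge map and so allows the syntomic and étale Abel–Jacobi maps to be compared. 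Modulo this compatibility — already set up in Subsection~\ref{subsec:p-adic-argument} — the corollary is a formal consequence of Theorem~\ref{thm:syntomic-l-value}.
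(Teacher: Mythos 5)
Your proposal is correct and takes precisely the route the paper has in mind: the paper does not spell out a proof for this corollary but presents it as the translation of Theorem~\ref{thm:syntomic-l-value} through the commutative diagram at the start of Subsection~\ref{subsec:p-adic-argument}, and your argument reconstructs exactly that — globality of $b_f$ placing the relevant classes in the kernel of the bad edge map, the resulting identity $\exp\circ\rsyn = \mathrm{loc}_p\circ\ret$, and then inverting $\exp$ by the Bloch–Kato logarithm. The only cosmetic imprecision is the identification of $H^1_{\syn}(\Spec\Q_p(\mu_N),\Q_p(1+k-j))$ with ``$\Q_p(\mu_N)\otimes K_f$''; what is meant is its identification with $D_{\dR}(\Q_p(1+k-j))$ over $\Q_p(\mu_N)$ (which has trivial $\Fil^0$ since $1+k-j\ge 1$), tensored with $K_f$, but this does not affect the argument.
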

These results are the $p$-adic analogues of Theorem~\ref{thm:complex-l-value}. Notice also that the parity conditions coincide. We have hence shown that the motivic class $b_f$ contains both complex and $p$-adic information about special values of $L$-functions.

\begin{remark}
 Note that if $E(f, f, 1 + j) = 0$ (which can only occur if $j = k$), then the same argument goes through to show that $L_p^{\mathrm{geom}}(F, F)(k, k, k + 1) = 0$ (unless $N = 1$, in which case there is no sensible definition of $b_f$).
\end{remark}

\section{Beilinson--Flach \texorpdfstring{$K$}{K}-elements}

 \subsection{Beilinson's cyclotomic classes}

  Let $\chi$ be a Dirichlet character, and $\Q(\chi)$ the extension of $\Q$ generated by its values. If $U$ is a $\Q(\chi)$-vector space with a $G_{\Q}$-action, we define $U^{\chi}$ as the $\chi_{Gal}$-eigenspace for the $G_{\Q}$-action:
  \begin{equation}
   \label{eq:chi-eigenspace}
   U^{\chi} = \{ x\in U \mid \sigma(x) = \chi_{Gal}(\sigma) x, \; \forall \sigma\in G_{\Q} \}.
  \end{equation}

  \begin{theorem}[Beilinson]
   \label{thm:deligne-regulator-beilinson-element}
   Let $n \in \N$ and $\chi$ a primitive Dirichlet character of conductor $N$, with $\chi \ne 1$ if $n = 0$. Then
   \[
    \dim(H^1_{\mot}(\Z[\mu_N], n+1)\otimes\Q(\chi))^{\chi} = \begin{cases} 1 &\text{if $\chi(-1) = (-1)^n$} \\ 0 &\text{if $\chi(-1)=(-1)^{n+1}$.} \end{cases}
   \]
   Moreover, when $\chi(-1) = (-1)^n$, there exists a canonical element $\phi_{n, \chi} \ne 0 \in (H^1_{\mot}(\Z[\mu_N], n+1)\otimes \Q(\chi))^{\chi}$ such that
   \begin{equation}
    \label{eq:deligne-regulator-beilinson-element}
     \rD(\phi_{n, \chi}) = - \tau(\chi) L(\chi^{-1}, n+1) = -\frac{2}{n!} \left(\frac{2\pi i}{N}\right)^n L'(\chi, -n).
   \end{equation}
  \end{theorem}

  \begin{remark} \
   \begin{enumerate}[(i)]
    \item This is \cite[Corollary 7.1.6]{beilinson84}; see also \cite{huberwildeshaus} for a less compressed account.
    \item Note that when $n > 0$ we have $H^1_{\mot}(\Z[\mu_N], n+1) = H^1_{\mot}(\Q(\mu_N), n+1)$; but for $n = 0$ it is important to work with the integral subring, since $H^1_{\mot}(\Q(\mu_N), n+1) = \Q(\mu_N)^\times \otimes \Q$ is infinite-dimensional.

    \item In the $n = 0$ case, we have $H^1_{\mot}(\Z[\mu_N], n+1) = \Z[\mu_N]^\times \otimes \Q$, and Beilinson's element is given by
    \[ \varphi_{0, \chi} = \sum_{a \in (\Z / N)^\times} (1 - \zeta_N^a) \otimes \chi(a),\]
    which is the cyclotomic unit $u_\chi$ as defined in \cite{Dasgupta-factorization}.  Moreover, in this case the Beilinson regulator is just the usual complex-analytic logarithm map.
   \end{enumerate}
  \end{remark}

  The above formulae have a $p$-adic counterpart, for any $p \nmid N$. There is a $p$-adic regulator map
  \[ r_p : H^1_{\mot}(\Q_p(\mu_N), n+1) \to \Q_p(\mu_N) \]
  analogous to the Deligne regulator; it can be defined via syntomic cohomology, or alternatively as the composite of the $p$-adic \'etale regulator and the Bloch--Kato logarithm map. (For $n = 0$ it is the usual $p$-adic logarithm.)

  \begin{theorem}
   \label{thm:syntomic-regulator-beilinson-element}
   Under the hypothesis $\chi(-1)=(-1)^n$, we have the $p$-adic regulator formula
   \begin{equation}
    \label{eq:syntomic-regulator-beilinson-element}
    \left(1-\tfrac{\chi^{-1}(p)}{p^{1+n}}\right) r_p(\varphi_{n, \chi})= -\tau(\chi) L_p(\chi^{-1}, 1+n) = -i^{a_\chi}N^{-n} L_p(\chi, -n).
   \end{equation}
  \end{theorem}

  \begin{proof}
   The case $n = 0$ is the well-known $p$-adic anaytic class number formula of Leopoldt. The general case follows from the compatibility between Beilinson's cyclotomic elements and the Soul\'e twists of the cyclotomic units (conjectured in \cite{blochkato90} and proved in \cite{huberwildeshaus}). See \cite[\S 4.2]{perrinriou95} for the case $\chi = 1$.
  \end{proof}

 \subsection{Beilinson-Flach $K$-elements}

  \begin{definition}
   The $j$-th \emph{Beilinson--Flach $K$-element} associated to $f$ is the element
   \[
    b_{f, j} = \langle\cBF^{k,k,j}, Z_f\rangle \in H^1_{\mot}(\Q(\mu_N),\Q(1+k-j))\otimes K_f.
   \]
  \end{definition}

  Note that $b_{f, j}$ is trivially zero if $j$ is odd, since $\Xi^{k, k, j}$ and $Z_f$ lie in opposite eigenspaces for the swapping involution. We assume henceforth that $j$ is even.

  \begin{proposition}
   \label{prop:galois-action}
   The Galois group $\gal{\Q(\mu_N)}{\Q}$ acts on $b_{f, j}$ through the character $\psi_{Gal}^{-1}$, i.e.\ $b_{f, j}^{\sigma} = \psi_{Gal}^{-1}(\sigma)b_{f, j}$. Explicitly, the automorphism $\sigma_q \colon \zeta \mapsto \zeta^q$ acts as $b_{f, j}^{\sigma_q} = \psi(q)b_{f, j}$.
  \end{proposition}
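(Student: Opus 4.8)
The plan is to trace the Galois action through the construction $b_f = \langle \cBF^{k,k,j}, Z_f \rangle$ and to show that the three ingredients contribute as follows: the compactified Beilinson-Flach class $\cBF^{k,k,j}$ is a class over $\Q$ (hence Galois-invariant); the cycle $Z_f$ is defined over $\Q(\mu_N)$ and carries the action of $\gal{\Q(\mu_N)}{\Q}$ through the Atkin-Lehner correspondence $w_N$; and the pairing is Galois-equivariant, so the action on $b_f$ is entirely governed by how $\gal{\Q(\mu_N)}{\Q}$ moves $Z_f$. First I would recall that $\cBF^{k,k,j} \in H^{3+2k}_{\mot}(\KS_k, 2+2k-j)$ is defined over $\Q$, since $\KS_k$ is a $\Q$-variety and the class is constructed from Eisenstein classes over $\Q$; thus it is fixed by the absolute Galois group, and in particular by $\gal{\Q(\mu_N)}{\Q}$.

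Next I would analyse $Z_f = \delta_{\infty}(\mathrm{gr}(T_{f^*}\circ w_N)) \in \CH^{k+1}(\KS_{k,\Q(\mu_N)})\otimes K_f$. The correspondences $\delta_{\infty}$ and $T_{f^*}$ are defined over $\Q$ (the latter with coefficients in $K_f$, but it commutes with the Galois action on $\Q(\mu_N)$), so the only source of non-trivial Galois action is $w_N$. The key computation is the conjugate of $w_N$ under $\sigma_q \in \gal{\Q(\mu_N)}{\Q}$: the Atkin-Lehner involution at level $N$ is induced by $\bigl(\begin{smallmatrix} 0 & -1 \\ N & 0 \end{smallmatrix}\bigr)$ which over $\Q(\mu_N)$ identifies the cusp at infinity, and $\sigma_q$ conjugates $w_N$ in a way that introduces a twist by the diamond operator $\langle q \rangle$ (equivalently, by $\psi(q)$ when we project to the $f$-component). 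I would make this precise using the standard relation $\sigma_q \circ w_N = w_N \circ \langle q \rangle$ (or its transpose) on the Kuga-Sato variety, so that $Z_f^{\sigma_q} = \delta_{\infty}(\mathrm{gr}(T_{f^*}\circ w_N \circ \langle q\rangle))$; since $T_{f^*}$ projects onto the $f^*$-eigenspace and $\langle q\rangle$ acts there by $\psi^{-1}(q)$ (the character of $f^*$ is $\psi^{-1}$), one finds $Z_f^{\sigma_q} = \psi^{-1}(q)^{-1} Z_f = \psi(q) Z_f$. One must be careful here about whether the diamond operator picks up $\psi$ or $\psi^{-1}$: since $f \in S_{k+2}(N,\psi)$, the diamond $\langle d\rangle$ acts on the $f$-isotypical part by $\psi(d)$, and on $M_{\theory}(f)^*$ (the dual, where $T_{f^*}$ lands) the relevant eigenvalue is conjugated; the bookkeeping is exactly the kind that determines the final sign and conjugation and is the main place an error could creep in.

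Then I would combine these: the pairing $\langle \cdot, \cdot\rangle$ in motivic cohomology is built from cup product and push-forward along structure maps, all of which are Galois-equivariant, so $b_f^{\sigma_q} = \langle (\cBF^{k,k,j})^{\sigma_q}, Z_f^{\sigma_q}\rangle = \langle \cBF^{k,k,j}, \psi(q) Z_f\rangle = \psi(q)\, b_f$. Translating $\psi(q) = (\psi)_{Gal}^{-1}(\sigma_q)$ via the convention $\chi_{Gal}(\Frob_l) = \chi^{-1}(l)$ fixed in the preliminaries (and noting $\sigma_q$ corresponds to $q$ under $\epsilon_{cyc}$, i.e.\ $\sigma_q = \Frob_{q}^{-1}$-type element in our normalisation) gives the stated formula $b_f^{\sigma} = (\psi)_{Gal}^{-1}(\sigma) b_f$. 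The main obstacle I anticipate is not the structural argument — which is a clean equivariance chase — but pinning down precisely how $\gal{\Q(\mu_N)}{\Q}$ acts on $w_N$ as a correspondence on $\KS_k$ (as opposed to on modular forms), and reconciling the $\psi$ versus $\psi^{-1}$ discrepancy between $M_{\theory}(f)$ and $M_{\theory}(f)^*$; I would handle this by working out the action on a fibre, using that $w_N$ over $\Q(\mu_N)$ swaps the standard $\Gamma_1(N)$-structure with its "dual" $\langle q\rangle$-twist, and cross-checking the final answer against the known behaviour of the cyclotomic class $\phi_{\psi}$ in~\eqref{eq:chi-eigenspace}, with which $b_f$ is ultimately to be compared.
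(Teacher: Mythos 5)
Your approach is exactly the paper's: it invokes the Galois-equivariance of the pairing, the $\Q$-rationality of $\cBF^{k,k,j}$, and Ohta's relation $w_N^{\sigma_q} = w_N\circ\langle q\rangle$, then traces the resulting diamond operator through the correspondence defining $Z_f$ to produce the factor $\psi(q)$. The one place to tighten — which you correctly flag as the potential error spot — is the $\psi$-versus-$\psi^{-1}$ bookkeeping: the paper's (and cleanest) route is to observe that in the pullback chain $\langle q\rangle^*\circ w_N^*\circ T_{f^*}^*$ the diamond operator acts \emph{after} $w_N^*$ has carried the $f^*$-isotypical forms to the $f$-isotypical ones, so $\langle q\rangle^*$ multiplies $\omega_f,\eta_f$ directly by $\psi(q)$; your version (via $\psi^{-1}(q)$ on the $f^*$-eigenspace followed by an inversion) reaches the same answer but silently uses the commutation $w_N\circ\langle q\rangle = \langle q\rangle^{-1}\circ w_N$, which should be stated.
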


  \begin{proof}
   The intersection pairing is Galois-equivariant, hence
   \[
    b_{f, j}^{\sigma} = \langle (\cBF^{k,k,j})^{\sigma}, Z_f^{\sigma} \rangle.
   \]
   The Beilinson--Flach class is defined over $\Q$, so it is left invariant by $\sigma$. To compute $Z_f^{\sigma}$, we write $\sigma = \sigma_q$ for some $q\in(\Z/N\Z)^{\times}$, where $\sigma_q$ is the unique automorphism of $\gal{\Q(\mu_N)}{\Q}$ characterised by $\zeta_N \mapsto \zeta_N^q$.

   In the composition defining $Z_f$, the only correspondence which is not defined over $\Q$ is that induced by the Atkin-Lehner involution. According to~\cite[Eq. 75]{ohta95}, the action of the Galois group on the Atkin-Lehner involution is given by composition with a diamond operator: $w_N^{\sigma_q} = w_N \circ \langle q \rangle$. Since $\langle q \rangle^*$ acts on $\omega_f$ and $\eta_f$ as the multiplication by $\psi(q) = (\psi)_{Gal}^{-1}(\sigma_q)$, the claim is proved.
  \end{proof}

  The proposition shows in particular that $b_{f, j} \in \bigl(H^1_{\mot}(\Q(\mu_N),1+k-j)\otimes K_f\bigr)^{\psi}$. We want to compare it with the class $\phi_{\psi}$ from Theorem~\ref{thm:deligne-regulator-beilinson-element}, using the regulator formulae from the preceding section. First we need to show that both $b_{f, j}$ and $\phi_{\psi}$ land in the same 1-dimensional space, for which we need to control their ``integrality'' at the primes dividing $N$.

 \subsection{Integrality of \texorpdfstring{$K$}{K}-elements}
 \label{subsec:integrality}

  In this subsection we discuss integrality of Beilinson--Flach $K$-elements. The question we are addressing is whether or not they live in $(H_{\mot}^1(\ringint{O}_{\Q(\mu_N)},1+k-j) \otimes K_f)^{\psi}$. We separate the cases $j < k$ and $j=k$. (Note that the case $j = k = 0$ has been treated by Dasgupta in \cite{Dasgupta-factorization}, using very different methods.)

  \subsubsection*{Case $j<k$}
  In this case $2k-2j+1 \geq 3$, which entails the equality
  \[
   K_{2k-2j+1}(\Q(\mu_N))\otimes_{\Z} \Q = K_{2k-2j+1}(\Z[\mu_N])\otimes_{\Z} \Q.
  \]
  So there is nothing to prove: we can already regard $b_{f, j}$ as being integral, since all classes in this $K$-group have this property. Hence we must have $b_{f, j} \in K_f \cdot \varphi_{1 + k - j, \psi}$.

  \subsubsection*{Case $j=k$}

  In this case the $K$-elements live in $H^1_{\mot}(\Q(\mu_N),1)\otimes K_f = \Q(\mu_N)^{\times} \otimes K_f$. Moreover, from~\cite[Remark~4.2.1]{scholl90} we know that $\KS_k$ has a smooth and proper model over $\Z[1/N]$; as our motivic cohomology classes $\Xi^{k, k, j}$ and $Z_f$ extend to this model, we have $b_{f, k} \in (\Z[\mu_N, 1/\ups]^\times \otimes K_f)^{\psi}$, where $\ups$ is the set of primes of $\Q(\mu_N)$ dividing $N$.

  It remains to study local behaviour of the classes $b_{f, k}$ at primes above $N$. For this, we shall use $p$-adic analytic techniques. (Recall that by assumption $p \nmid N$.) Let $v$ be the prime of $K_f$ determined by our choice of embedding $\overline{\Q} \hookrightarrow \overline{\Q}_p$. Then we have the following:

  \begin{proposition}\label{prop:p-adic-unram}
   An element $x \in (\Q(\mu_N)^{\times} \otimes K_f)^{\psi}$ is integral at a prime $\ell \ne p$ iff its image in $H^1(\Q, K_{f, v}(1)(\psi))$ is unramified at $\ell$.\qed
  \end{proposition}

  From the compatibility of our pairings, the image of $b_{f, k}$ in Galois cohomology is the image of the \'etale Beilinson--Flach class $\BF^{f, f, k}_{\et}$ under the map of Galois representations
  \[ M_{\et}(f \otimes f)^*(-k) \to (\wedge^2 M_{\et}(f)^*)(1) = K_f(1)(\psi) \]
  given by pairing with the \'etale realisation of $Z_f$. So if $\BF^{f, f, k}_{\et}$ is unramified at $\ell$ (i.e.~satisfies the Bloch--Kato ``$H^1_{\mathrm{f}}$'' local condition), it follows that $b_{f, k}$ is integral at the primes above $\ell$.

  \begin{proposition}
   Assume that the Hecke polynomial of $f$ at $p$ has a root $\alpha_f$ such that $(f, \alpha_f)$ is noble and the Euler factor $E(f, f, 1 + k)$ does not vanish. Then the class $\BF^{f, f, k}_{\et}$ is unramified at all primes $\ell \ne p$, and hence $b_{f, k}$ is a $K_f$-multiple of the Beilinson element $\varphi_{1+k-j, \psi}$.
  \end{proposition}

  \begin{proof}
   In \cite{loefflerzerbes16}, the second author and Zerbes constructed classes in Pottharst-style ``analytic Iwasawa cohomology'' interpolating the Beilinson--Flach classes, and showed that any class admitting such an interpolation is necessarily in $H^1_{\mathrm{f}}$ away from $p$; see Theorem 8.1.4 (ii) of \emph{op.cit.}.

   (Note that in Assumption 3.5.6 of \emph{op.cit.} it is assumed that $f$ not be a twist of $g$, clearly ruling out the case $f = g$ considered here; but this is used only to control torsion in Iwasawa cohomology. Since we do not require a full Euler system but only the $p$-direction, it suffices that $\psi \ne 1$; and our hypotheses imply $\psi \ne 1$, since if $\psi(p) = 1$ then $\alpha_f \beta_f = p^{k + 1}$ and hence $E(f, f, 1 + k) = 0$.)

   However, in order to interpolate in this manner, one needs to ``drop some Euler factors'': more precisely, the class $c_1^{\alpha_f \alpha_f}$ in \emph{op.cit.} depends on a choice of root $\alpha_f$ of the Hecke polynomial, and it is related to the $\BF^{f, f, k}_{\et}$ considered here by the formula (see Prop 3.5.5 of \emph{op.cit.})
   \[ c_1^{\alpha_f \alpha_f} = \left(1 - \frac{p^k}{\alpha_f^2}\middle)
   \middle(1 - \frac{\alpha_f \beta_f}{p^{k+1}}\right)^2
   \left(1 - \frac{\beta_f^2}{p^{k+1}}\right)
    \BF^{f, f, k}_{\et}. \]
   So if this product of Euler factors is non-zero, we deduce that $\BF^{f, f, k}_{\et}$ is unramified. This Euler factor is not quite the same as $E(f, f, 1 + k)$, which is given by
   \[ \left(1 - \frac{p^k}{\alpha_f^2}\middle)
      \middle(1 - \frac{p^k}{\alpha_f \beta_f}\middle)
      \middle(1 - \frac{\alpha_f \beta_f}{p^{k+1}}\middle)
      \middle(1 - \frac{\beta_f^2}{p^{k+1}}\right) ;\]
   however, since $\alpha_f \beta_f = p^{k+1} \psi(p)$, the factors with $p^k$ in the numerator cannot vanish, and hence the first Euler factor vanishes iff the second one does.
  \end{proof}

  \begin{remark}
   Since we are proving a property of the motivic class $b_{f, k}$ which does not depend on $p$, we are free to vary our prime $p$. One can check, using the Sato--Tate conjecture (or from various weaker equidistribution results), that if $\psi$ is non-trivial, there exist infinitely many primes such that the preceding theorem applies, so $b_{f, k} \in \Z[\mu_N]^\times \otimes K_f$. However, we shall not need this here, since we are only introducing $b_{f, k}$ in order to prove a relation between $p$-adic $L$-values, and we shall see shortly that this holds trivially (as $0 = 0$) when $E(f, f, 1 + k) = 0$.
  \end{remark}

\section{Factorisation and interpolation in half of the weight space}
\label{sec:comparison}
In this section we gather the work done in the previous sections to prove Theorem~\ref{intro:final2}, and Theorem~\ref{intro:final1} in a half of the weight space. We will then discuss how to generalise the interpolation equations when we relax the hypotheses.

\subsection{Comparison of motivic classes}

 In this section we prove a relation between motivic cohomology classes. We will need Theorems~\ref{thm:deligne-regulator-beilinson-element} and~\ref{thm:complex-l-value}, which allow us to ``lift'' the complex factorisation in terms of motivic classes. We suppose that $j$ is \emph{even}. The mentioned results provide equations linking higher cyclotomic classes and Beilinson--Flach $K$-elements to complex $L$-values. In particular, $\phi_{\psi}$ is related to the value $L'(\psi,j-k)$ and $b_f$ is related to the value $L'(f,f,j+1)$.

 Recall equation~\eqref{eq:leading-terms} relating leading terms of the complex $L$-functions:
 \[
  L'(f,f,j+1) =  L'(\psi,j-k)\Limp(\Sym^2 f,j+1)
  \prod_{\substack{\ell \mid N \\ \ell \nmid N_{\psi}}} \Bigl(1 - \frac{\psi(\ell)}{\ell^{j-k}} \Bigr).
 \]
  Equations~\eqref{eq:deligne-regulator-beilinson-element} and~\eqref{eq:deligne-regulator-bf} proved in the paper show that the above translates to
  \begin{multline*}
   \rD(b_f) = \rD(\phi_{\psi}) (-1)^{k+1}\frac{(2\pi iN_{\psi})^{k-j}}{2\tau(\psi)}\frac{\lambda_N(f^*)}{(-4\pi)^{k+1}\langle f,f \rangle}\Bigl( \frac{k!}{(k-j)!} \Bigr)^2 \\
   \cdot \Limp(\Sym^2 f,j+1)\prod_{\substack{\ell \mid N \\ \ell \nmid N_{\psi}}} \Bigl(1 - \frac{\psi(\ell)}{\ell^{j-k}} \Bigr).
  \end{multline*}
  Let
\[
    \Upsilon = (-1)^{k+1}\frac{(2\pi iN_{\psi})^{k-j}\lambda_N(f^*)}{2\tau(\psi)(-4\pi)^{k+1}\langle f,f \rangle}\Bigl( \frac{k!}{(k-j)!} \Bigr)^2 \Limp(\Sym^2 f,j+1)\smashoperator{\prod_{\substack{\ell \mid N \\ \ell \nmid N_{\psi}}}} \Bigl(1 - \frac{\psi(\ell)}{\ell^{j-k}}\Bigr)
\]
then $\rD(b_f) = \rD(\phi_{\psi})\Upsilon$.

We can now prove the key motivic result which is the heart of our factorization formula:
\begin{theorem}
 \label{thm:motivic-relation}
 Assume $j < k$, or $j = k$ and $E(f, f, 1 + k) \ne 0$. Then $\Upsilon \in K_f$, and moreover we have
 \[ b_f = \Upsilon \cdot \phi_{\psi}\quad \text{as elements of}\quad H^1_{\mot}(\Z[\mu_N], 1 + k - j) \otimes K_f.\]
\end{theorem}

\begin{proof}
 Under these conditions, we have seen that $b_f$ is necessarily in the one-dimensional space spanned by $\varphi_{\psi}$. Since $r_{\del}(\varphi_{\psi}) \ne 0$, the constant of proportionality must be $\Upsilon$.
\end{proof}

\begin{remark}
 One can show directly that $\Upsilon \in K_f$ (even in the cases where we have not shown that $b_f$ is integral), using the rationality results for symmetric-square $L$-values (originally due to Sturm) summarized in \cite[Corollary~2.6]{schmidt88}.
\end{remark}

 \subsection{\texorpdfstring{$p$}{p}-adic factorisation and interpolation}

  If we were in the ordinary case, the next step would be to deduce a relationship between the elements $b_f$ and $\phi_{\psi}$ in the $p$-adic setting by applying the syntomic regulator to the last theorem, and then prove the theorem by making the $p$-adic symmetric square appear from the extra factor $\Upsilon$---which has $\Limp(\Sym^2 f,j+1)$ as a factor---thanks to its interpolation equation. That argument works well in the ordinary case, because the only $L$-function for which we miss a regulator formula is the symmetric square one, but we can still pass from its complex (critical) values to $p$-adic ones via interpolation. By contrast, in the supersingular case we face the problem of the \emph{existence} of a $p$-adic symmetric square $L$-function. As we cannot ``translate'' the complex factor in a $p$-adic one, such missing piece jeopardises the whole argument. We then choose a different route: we \emph{define} a $2$-variable $p$-adic $L$-function via the putative factorisation, and then prove that this interpolates a family of $L$-functions $\Limp(\Sym^2 \cdot, \cdot)$ where both the modular form and the complex variable are allowed to vary analytically.

  Recall that we assume $\alpha_f\neq\beta_f$, to guarantee the existence of $F$ and $\Lgeom(F,F)$.
  \begin{definition}
   Let $U$ be the largest affinoid open around $k$ such that the function $\Lgeom(F,F)$ is well-defined and analytic over $U\times U\times\Weight$. The imprimitive $p$-adic symmetric square $L$-function $\Limp_p(\Sym^2 F)$ is defined as:
   \begin{align*}
    \Limp_p(\Sym^2 F) \colon U\times \Weight &\to \overline{\Q}_p \\
    (\kappa, \sigma) &\mapsto \frac{\Lgeom(F,F)(\kappa, \kappa, \sigma)} {L_p(\psi,\sigma-\kappa-1)}
   \end{align*}
   for all $(\sigma_1,\sigma_2)\in U\times\Weight$ such that $L_p(\psi,\sigma-\kappa-1) \neq 0$. (If $\psi = 1$ and $\sigma = \kappa + 1$ or $\kappa+2$, this ratio is 0, since $\Lgeom$ is analytic while the denominator has a pole.)
  \end{definition}

  We may be introducing poles at the zeros of $L_p(\psi)$, so in general $\Limp_p(\Sym^2 F)$ is a meromorphic function. Our goal will be to show that this function interpolates symmetric square $L$-values, so it can be validly interpreted as a $p$-adic symmetric square $L$-function for the Coleman family $F$. Let us consider the specialization at $(k, j)$, for $0 \le j \le k$ and $j$ even.

  First we suppose that the hypotheses of Theorem \ref{thm:motivic-relation} hold. Applying the syntomic regulator to both sides of the relation in Theorem~\ref{thm:motivic-relation} gives $\rsyn(b_f) = \Upsilon\cdot \rsyn(\phi_{\psi})$. By theorems~\ref{thm:syntomic-regulator-beilinson-element} and~\ref{thm:syntomic-l-value}, both of these syntomic regulators are related to $p$-adic $L$-values (the parity conditions of those theorems are satisfied as we already assumed that $j$ is even). Substituting these formulae, and the definition of $\Upsilon$, into the relation $\rsyn(b_f) = \rsyn(\phi_{\psi})\Upsilon$ we obtain:
  \begin{gather}
   \frac{(-1)^k k!\binom{k}{j}\lambda_N(f^*)\frac{2E(f)E^*(f)}{E(f,f,j+1)}\Lgeom(F,F)(k,k,j+1)}{(-1)^{k-j}\frac{(k-j)!}{N_{\psi}^{k-j}} i^{a_{\psi}} \Bigl(1-\frac{\psi^{-1}(p)}{p^{1+k-j}}\Bigr)^{-1} L_p(\psi,j-k)} = \Upsilon,\notag
   \intertext{which rearranges to}
   \begin{split}
     \Limp_p(\Sym^2 F)(k,j+1) = &(-1)^{k+1}j! \frac{E(\Sym^2 f, j+1)}{E(f)E^*(f)}\frac{(2\pi i)^{k-j}i^{a_{\psi}}}{4\tau(\psi)(4\pi)^{k+1}\langle f,f \rangle} \\
     &\cdot \Limp(\Sym^2 f,j+1)\prod_{\substack{\ell \mid N \\ \ell \nmid N_{\psi}}} \Bigl(1 - \frac{\psi(\ell)}{\ell^{j-k}}\Bigr), \label{eq:auxp-unraveled}
   \end{split}
  \end{gather}
  at least if $L_p(\psi, j - k) \ne 0$. The formula is well-defined, as $E(f)\neq 0$ always, and $E^*(f)\neq 0$ because we assumed $\alpha_f\neq\beta_f$. Here we have defined
  \[ E(\Sym^2 f, j + 1) \coloneqq \left(1 - \frac{p^j}{\alpha_f^2}\middle) \middle(1 - \frac{\alpha_f\beta_f}{p^{1 + j}}\middle) \middle(1 - \frac{\beta_f^2}{p^{1 + j}}\right), \]
  which is the expected shape of the interpolation factor for $\Sym^2$ $p$-adic $L$-functions in the left half of the critical strip.

  We now suppose $j = k$ and $E(f, f, k+1) = 0$. Then $E(\Sym^2 f, k + 1)$ is also zero, so the right-hand side of \eqref{eq:auxp-unraveled} vanishes. On the other hand, if $N > 1$, then $\Lgeom$ vanishes, as we have seen above; and if $N = 1$ then necessarily $\psi = 1$, so the denominator term $L_p(\psi, \sigma - \kappa - 1)$ is $\infty$. So in either case $\Limp_p(\Sym^2 F)(k, k+1) = 0$ and \eqref{eq:auxp-unraveled} holds as $0 = 0$.

  The same computation applies if we specialize instead at $(t, j)$, for some $t \in \N$ lying in our disc around $k$ (and $0 \le j \le t$ even), with $f$ replaced by the specialization $f'$ of $F$ in weight $t + 2$. (More precisely, $f'$ is the level $N$ eigenform such that $F$ specializes to a $p$-stabilization of $f'$.) This proves that $\Limp_p(\Sym^2 f)$ actually interpolates the critical values (in the left half of the critical strip) of the complex function $\Limp(\Sym^2 f')$, as $f'$ varies over specializations of $F$, as long as we avoid points where $L_p(\psi, \sigma - \kappa - 1) = 0$. So we have proved the following:

  \begin{theorem}
   \label{thm:first-half-final}
   Suppose that $(f, \alpha_f)$ is a noble eigenform. Then, for all sufficiently small affinoid discs $U \ni \{k\}$, there exists a unique $2$-variable meromorphic $p$-adic $L$-function
   \[ \Limp_p(\Sym^2 F)\colon U\times\Weight_- \to \overline{\Q}_p, \]
   analytic away from the zeroes of $L_p(\psi, \sigma - \kappa - 1)$, with the following interpolation property: at every pair $(t,j_0)\in \N^2 \cap U\times \Weight$ with $0 \le j \le t$, $j$ even, and $L_p(\psi, j - t) \ne 0$, we have
   \begin{multline}
    \label{eq:final-interpolation}
    \Limp_p(\Sym^2 F)(t,j+1) = (-1)^{t+1}j! \frac{(2\pi i)^{t-j} i^{a_{\psi}}}{4\tau(\psi)(4\pi)^{t+1}\langle f_{t},f_{t} \rangle} \\
    \cdot \frac{E(\Sym^2 f_{t},j+1)}{E(f_{t})E^*(f_{t})} \Limp(\Sym^2 f_{t},j+1)\prod_{\substack{\ell \mid N \\ \ell \nmid N_{\psi}}} \Bigl(1 - \frac{\psi(\ell)}{\ell^{j-t}}\Bigr)
   \end{multline}
   where $f_{t}\in M_{t+2}(N,\psi)$ is the weight $t$ specialization of $F$; and the following factorization of $p$-adic $L$-functions holds on $U \times \Weight_-$:
   \begin{equation}
    \label{eq:final-factorization}
    \Lgeom(F,F)(\sigma_1,\sigma_1,\sigma_2) = \Limp_p(\Sym^2 F)(\sigma_1,\sigma_2)\cdot L_p(\psi,\sigma_2-\sigma_1-1).\myqed
   \end{equation}
  \end{theorem}
  
  This proves ``half'' of Theorems \ref{intro:final1} and \ref{intro:final2} from the introduction.
  
  \begin{remark}
   Observe that this interpolating property agrees with the interpolating property of the $p$-adic symmetric square $L$-function of Hida in the ordinary case, up to un-important constant factors (a sign and a Gauss sum); see \cite[Eq. (36)]{Dasgupta-factorization}.
  \end{remark}

 \subsection{One-variable interpolation and ramified twists}

  If we simply specialize Theorem \ref{thm:first-half-final} at $\kappa = k$, then we deduce that there exists an analytic function $\Limp_p(\Sym^2 f, -) : \Weight_- \to \overline{\Q}_p$ with the following properties:
  \begin{enumerate}[(i)]
   \item its values at $\sigma = j + 1$, for even $j$ with $0 \le j \le k$, interpolate critical $\Sym^2$ $L$-values;
   \item the $p$-adic Rankin--Selberg $L$-function $\Limp_p(f, f, -)$ factorizes as
   \[ \Limp_p(f, f, \sigma) = \Limp_p(\Sym^2 f, \sigma) \cdot L_p(\psi, \sigma - k - 1).\]
  \end{enumerate}

  However, for a fixed $k$ the interpolating property (i) only sees finitely many values of $\Limp_p(\Sym^2 f, -)$ and thus does not determine it uniquely; so it is not \emph{a priori} clear if it matches up with other constructions of $p$-adic symmetric square $L$-functions in a fixed weight, such as that of \cite{dabrowskidelbourgo97}.

  Although we shall not give full details here (for reasons of space), this can be dealt with as follows:
  \begin{itemize}
   \item The reciprocity law for Beilinson--Flach elements (\cite{KLZ20, loefflerzerbes16}) holds as an identity of 3-variable functions. Thus, although it is proved by analytic continuation from crystalline specializations, we can nonetheless specialize it at $(k, k, j + 1 + \theta)$ for ramified characters $\theta$ of $\Z_p^\times$, to obtain a formula for $r_{\syn}$ of a ``$\theta$-twisted'' Beilinson--Flach element in terms of $p$-adic $L$-values.

   \item If $0 \le j \le k$ and $j + \theta$ is even, then this syntomic regulator factors through the $\wedge^2$ direct summand (as in the case $\theta = 1$ considered above), and we may repeat the argument to obtain a relation between $\Limp_p(\Sym^2 f, j + 1 + \theta)$ and the critical value $\Limp(\Sym^2 f, \theta^{-1}, 1 + j)$, extending \eqref{eq:final-interpolation}, assuming as usual that $L_p(\psi, j -k + \theta) \ne 0$.
  \end{itemize}

  This will imply, in particular, that $\Limp_p(\Sym^2 f, \sigma)$ agrees with the $p$-adic $L$-function of \cite{dabrowskidelbourgo97} when $v_p(\alpha_f) < \tfrac{k + 1}{2}$ (which is required for the latter to be well-defined), up to correcting for bad Euler factors at primes $\ell \mid N$.

  \begin{remark}
   This is in a sense not the ``morally right'' argument, which is another reason why we have not given the details in full. A much superior approach would be to work out a direct analytic construction of $p$-adic $\Sym^2$ $L$-functions for Coleman families, compatible with the Dabrowski--Delbourgo construction in a fixed weight. This analytically-defined object would then have to agree with our motivically-defined $\Limp_p(\Sym^2 F)$ (since the interpolation property of the latter holds at a Zariski-dense set if we do not fix $k$),  and we could then deduce the factorization in a fixed weight by specializing. This would also show that $\Limp_p(\Sym^2 f)$, and its two-variable analogue, are actually analytic rather than merely meromorphic.
  \end{remark}

\section{Functional equation and interpolation in the whole weight space}
 \label{sec:functional-eq-interpolation-whole}

 In the previous section, we studied the function $\Limp_p(\Sym^2 F)$ on $U \times \Weight_-$, showing that it interpolates $\Sym^2$ $L$-values in the ``left half'' of the critical strip and is related to $L_p(F, F, -)$ by a factorization formula \eqref{eq:final-factorization}. In this section we consider the restriction to $U \times \Weight_+$, and relate this to $\Sym^2$ $L$-values in the ``right half'' of the critical strip. For this, we shall need to use functional equations, which do not work well for imprimitive $L$-functions. So, for simplicity, we shall impose the following hypothesis throughout \S\ref{sec:functional-eq-interpolation-whole}:
  \begin{itemize}
  \item Both $\psi$ and $\psi^2$ are primitive modulo $N$.
  \end{itemize}

  Then the imprimitive and primitive $\Sym^2$ $L$-functions agree, and likewise the Rankin--Selberg $L$-functions, so we shall drop the ``$\mathrm{imp}$'' labels.

 \subsection{Comparison of functional equations}

  Recall that the local $\epsilon$-factor appearing in the functional equation of $L(f \otimes g, s)$ has the form $\epsilon(f \otimes g, s) = A \cdot N^{3s}$, where $A \in \overline{\Q}^\times$ is a non-zero constant which can be given explicitly in terms of the Hecke eigenvalues of $f$ and $g$ at primes $\ell \mid N$. It follows that there is a function $\tilde\epsilon(F, F)$ on $V_1 \times V_2 \times \Weight$ specializing at classical points $(t, t', s) \in U \times U \times \Weight$ to $\epsilon(f_t, f_{t'}, s)$ (and this function has the form $A(\kappa_1, \kappa_2) \cdot N^{3\sigma}$, although we shall not use this directly).
    
  \begin{proposition}[Benois--Horte]
   We have
   \[ 
    \Lgeom(F, F)(\kappa_1, \kappa_2, \kappa_1 + \kappa_2 + 3 - \sigma) = \tilde\epsilon(F \otimes F)(\kappa_1, \kappa_2, \sigma) \cdot \Lgeom(F^*, F^*)(\kappa_1, \kappa_2, \sigma). 
   \]
  \end{proposition}
  
  \begin{proof}
   This follows by comparing the two sides at critical values (which are Zariski-dense in $V_1 \times V_2 \times \Weight$) and using the classical functional equation; see \cite[Proposition~6.4.2]{benoishorte23}.
  \end{proof}
  
  From this we can derive a functional equation for $L_p(\Sym^2 F)$. We set $\tilde{\epsilon}(\Sym^2 F)(\kappa, \sigma) = \tfrac{\tilde{\epsilon}(F \otimes F)(\kappa, \kappa, \sigma)}{\tilde{\epsilon}(\psi, \sigma - \kappa - 1)}$, which is an analytic function on $U \times \Weight$; by construction this interpolates the $\varepsilon$-factor of the symmetric square. We can now deduce Theorem \ref{intro:final3}:
  
  \begin{theorem}
   \label{thm:functional-equation}
   Suppose that $\psi$, $\psi^2$ are primitive modulo $N$. Then $L_p(\Sym^2 F)$ satisfies the following functional equation for every $(\kappa,\sigma)\in U\times\Weight$:
   \[
    L_p(\Sym^2 F)(\kappa, 2\kappa + 3 - \sigma) = \tilde{\epsilon}(\Sym^2 F)(\kappa, \sigma) \cdot L_p(\Sym^2 F^*)(\kappa,\sigma).
   \]
  \end{theorem}
  
  \begin{proof}
   Since we have by definition $L_p(\Sym^2 F)(\kappa, \sigma) = \Lgeom(F,F)(\kappa,\kappa,\sigma) L_p(\psi,\sigma-\kappa-1)^{-1}$, this follows immediately from the functional equations for $\Lgeom(F,F)$ and $L_p(\psi)$.
  \end{proof}
  
 \subsection{Interpolation of critical values}
 
  We now use Theorem \ref{thm:functional-equation} and the classical functional equation to recover an interpolating property at critical values in the right half of the strip.

  \begin{theorem}
   \label{thm:interpolation-righthalf}
   At every pair $(t,j)\in \N^2 \cap U\times \Weight$ with $t + 1 \le j \le 2t + 1$, $j$ odd, and $L_p(\psi, j - t) \ne 0$, we have
   \begin{multline}
    \label{eq:final-interpolation2}
    L_p(\Sym^2 F)(t,j+1) =\\ \frac{E^*(\Sym^2 f, 1 + j)}{E(f_{t})E^*(f_{t})} \cdot \frac{i^{(2j+1)} (2\pi)^{2t-2j + 1 - a_\psi} j! (j - t - 1)!}{2 \tau(\psi^{-1}) \langle f, f\rangle} \cdot L(\Sym^2 f_{t},j+1),
   \end{multline}
   where 
   \[ E^*(\Sym^2 f_t, 1 + j) = \left(1 - \frac{p^j}{\alpha_{f_t}^2}\middle) \middle(1 - \frac{p^j}{\alpha_{f_t} \beta_{f_t}}\middle)\middle(1 - \frac{\beta_{f_t}^2}{p^{1 + j}}\right).\]
  \end{theorem}
  
  \begin{proof}
   This follows from the interpolating property for $L_p(\Sym^2 F^*)$ at $(t, 2t + 3 - j)$, via a tedious but routine calculation using the functional equations for the complex and $p$-adic symmetric square $L$-functions.
  \end{proof}

  \begin{remark}
   Note that $E^*(\Sym^2 f, 1 + j)$ is the expected form of Euler factor for symmetric square $L$-functions in the \emph{right} half of the critical strip, which differs slightly from its analogue in the left half; it arises in our computation since we have the symmetry relation $E^*(\Sym^2 f, 1 + j) = E(\Sym^2 f^*, 2k + 2 - j)$.
  \end{remark}

\hyphenation{Man-u-scrip-ta}
\hyphenation{mo-tives}

\providecommand{\bysame}{\leavevmode\hbox to3em{\hrulefill}\thinspace}
\providecommand{\MR}[1]{}
\renewcommand{\MR}[1]{%
 MR \href{http://www.ams.org/mathscinet-getitem?mr=#1}{#1}.
}
\providecommand{\href}[2]{#2}
\newcommand{\articlehref}[2]{\href{#1}{#2}}

\end{document}